\documentclass[11pt]{article}
\usepackage{amscd}
\usepackage{graphics}
\usepackage{color}
\usepackage{cancel}
\usepackage{stmaryrd}
\usepackage{tikz}
\usetikzlibrary{matrix,arrows}
\usepackage{amsmath}
\usepackage{amsthm}
\usepackage{amssymb}
\usepackage{mathtools}
\usepackage{proof}
\usepackage{verbatim,cmll}
\usepackage{setspace}
\usepackage{lscape}
\usepackage{latexsym,relsize}
\usepackage{amscd}
\usepackage{multicol}
\usepackage{bussproofs}
\usepackage{tikz}
\usepackage[position=top]{subfig}
\usepackage{leqno}
\usepackage{authblk}
\usepackage{marvosym}

\usepackage{mathtools}
\usepackage{enumitem}
\usepackage{hyperref}
\usepackage{thmtools}
\usepackage[capitalise]{cleveref}
\usepackage{semantic} 

\renewcommand{\phi}{\varphi}

\renewcommand{\L}{\mathcal{L}}
\newcommand{\RO}{\mathcal{RO}}
\def\int{{\mathsf{int}}}
\def\cl{{\mathsf{cl}}}

\newcommand{\simpa}{\rightsquigarrow}
\newcommand{\univ}{[\forall]}
\newcommand{\ex}{\langle \exists \rangle}
\newcommand{\biimpl}{\leftrightarrow}
\renewcommand{\ne}{\neg}
\newcommand{\impl}{\rightarrow}

\newcommand{\univin}{\univ^{-1}}
\newcommand{\exin}{\langle \exists \rangle^{-1}}
\newcommand{\nablain}[1]{\nabla^{-#1}}
\newcommand{\Deltain}[1]{\Delta^{-#1}}
\newcommand{\Boxin}{\Box^{-1}}
\newcommand{\Diamondin}{\Diamond^{-1}}

\newcommand{\SSIC}{\mathsf{S^2IC}}
\newcommand{\MSSIC}{\mathsf{MS^2IC}}
\newcommand{\UMSSIC}{{\mathsf{UMS^2IC}}}
\newcommand{\Con}{\mathsf{Con}}
\newcommand{\Comp}{\mathsf{Comp}}
\newcommand{\DeV}{\mathsf{DeV}}
\newcommand{\UMComp}{\mathsf{UMComp}}
\newcommand{\AMCon}{\mathsf{AMCon}}
\newcommand{\UMDeV}{\mathsf{UMDeV}}
\newcommand{\KHaus}{\mathsf{KHaus}}
\newcommand{\MKHaus}{\mathsf{MKHaus}}
\newcommand{\AMDeV}{\mathsf{AMDeV}}
\newcommand{\ZUMComp}{\mathsf{ZUMComp}}
\newcommand{\ZAMDeV}{\mathsf{ZAMDeV}}
\newcommand{\ZUMDeV}{\mathsf{ZUMDeV}}
\newcommand{\DFrm}{\mathsf{DFrm}}

\newenvironment{syst}%
  {\left\lVert\begin{matrix*}[l]}%
  {\end{matrix*}\right.}

\newtheorem{theorem}{Theorem}[section]
\newtheorem{lemma}[theorem]{Lemma}
\newtheorem{proposition}[theorem]{Proposition}
\newtheorem{corollary}[theorem]{Corollary}

\theoremstyle{definition}
\newtheorem{definition}[theorem]{Definition}

\newtheorem{remark}[theorem]{Remark}

\providecommand{\MSC}[1]
{
  \small	
  \noindent 2020 \textit{Mathematics Subject Classification.} #1
}

\providecommand{\keywords}[1]
{
  \small	
  \noindent\textit{Key words and phrases.} #1
}

\AtEndDocument{%
  \medskip
  \begin{tabular}{@{}l@{}}%
    \textsc{Institute for Logic, Language and Computation, University of }\\
    \textsc{Amsterdam, the Netherlands}\\
    \textit{E-mail address}: \texttt{N.Bezhanishvili@uva.nl}\\
    \textsc{Department of Mathematics, University of Milan, Italy}\\
    \textit{E-mail address}: \texttt{luca.carai.uni@gmail.com}\\
    \textsc{Department of Mathematics, University of Milan, Italy}\\
    \textit{E-mail address}: \texttt{silvio.ghilardi@unimi.it}\\
    \textsc{School of Mathematics and Statistics, Taishan University, China}\\
    \textit{E-mail address}: \texttt{zhaozhiguang23@gmail.com}\\
  \end{tabular}}

\title{A calculus for modal compact Hausdorff spaces}
\author{Nick Bezhanishvili, Luca Carai, Silvio Ghilardi, and Zhiguang Zhao}

\date{}

\begin{document}
\maketitle

\begin{abstract}
The symmetric strict implication calculus $\SSIC$ is a modal calculus for compact Hausdorff spaces. This  is established through de Vries duality, linking compact Hausdorff spaces with de Vries algebras---complete Boolean algebras equipped with a special relation.
Modal compact Hausdorff spaces are compact Hausdorff spaces enriched with a continuous relation. These spaces correspond, via modalized de Vries duality, to upper continuous modal de Vries algebras. 

In this paper we  introduce the modal symmetric strict implication calculus $\MSSIC$, which extends $\SSIC$. 
We prove that $\MSSIC$ is strongly sound and complete with respect to upper continuous modal de Vries algebras, thereby providing 
a logical calculus for modal compact Hausdorff spaces. 
We also develop a relational semantics for $\MSSIC$ that we employ to show admissibility of various $\Pi_2$-rules in this system.
\end{abstract}

\MSC{03B45, 06E15, 54E05, 06E25.}

\keywords{Modal logic, compact Hausdorff space, continuous relation, de Vries algebra, strict implication, $\Pi_2$-rule, admissible rule.}

\section{Introduction}\label{Sec:Intro}

Dualities between algebras and topological spaces provide a crucial tool in the study of logics, algebras, and topologies. The 
groundbreaking  work of Stone \cite{St36} established the duality between Boolean algebras and Stone spaces, paving the way for numerous subsequent studies on dualities. Prominent instances of these dualities  include the celebrated Priestley duality between distributive lattices and Priestley spaces
\cite{Pr70,Pr72} and the renowned Esakia duality between Heyting algebras and Esakia spaces \cite{Es74, Esa19}. Another essential addition to these dualities is the \emph{de Vries duality} \cite{deV62}, which states that compact Hausdorff spaces are dual to de Vries algebras---complete Boolean algebras enriched with a binary relation satisfying some specific properties. 
From a logical perspective, de Vries algebras provide an algebraic semantics for the \emph{strict symmetric implication calculus}  $\SSIC$ \cite{BeBeSaVe19,Sa16}. It follows from de Vries duality that compact Hausdorff spaces give a topological semantics for $\SSIC$.

\emph{Modal compact Hausdorff spaces} \cite{BeBeHa15} are the compact Hausdorff generalization of modal spaces
(see, e.g., \cite{CZ97, BRV01, BeBeIe16}). 
These spaces are compact Hausdorff spaces endowed with a relation $R$ satisfying some `continuity' conditions, saying that $R$ is point-closed and that the converse of $R$ maps open sets  to open sets and closed sets to closed sets.  
The modal version of de Vries duality~\cite{BeBeHa15} establishes that modal compact Hausdorff spaces are dually equivalent to lower continuous modal de Vries algebras as well as to upper continuous modal de Vries algebras, which are de Vries algebras equipped with certain operators.
Developing a sound and complete calculus for modal compact Hausdorff spaces was left as an open problem. 

In this paper, we solve  this problem by developing a logical calculus for modal compact Hausdorff spaces. In particular, we introduce the calculus $\MSSIC$ by extending  the strict symmetric implication calculus   $\SSIC$
with  a modal operator $\Box$. Subsequently, we introduce the modal calculus $\UMSSIC$ and prove that  
it is strongly sound and complete with respect to upper continuous modal de Vries algebras. This  is achieved  by adding to $\MSSIC$ specific  $\Pi_2$-rules that express upper continuity. However, this also generates a question whether such non-standard rules are indeed necessary for the axiomatization. 
While it is also possible to obtain a calculus strongly sound and complete with respect to lower continuous modal de Vries algebras, its axiomatization is more involved (see \cref{rem:lower continous calculus}). 
For this reason, we leave the investigation of a calculus for lower continuous modal de Vries algebras to a future work.

Non-standard rules for irreflexivity were first introduced by Gabbay \cite{Ga81}. These rules serve the role of quantifiers in propositional modal logics and have found application in various domains since their inception. They have been utilized in temporal logic \cite{Bu80,GaHo90}, region-based theories of space \cite{BaTiVa07,Va07}, and have played a crucial role in establishing completeness results for modal logic systems featuring non-$\xi$-rules \cite{Ve93}. Notably, the $\Pi_2$-rules, a specific class of such non-standard rules \cite{BeBeSaVe19,BeGhLa20,Sa16}, extend and generalize both Gabbay's irreflexivity rule \cite{Ga81} and Venema's non-$\xi$-rules \cite{Ve93}. 
These rules are referred to as $\Pi_2$-rules because they correspond to semantic conditions expressed by $\forall\exists$-statements.

The $\Pi_2$-rules played a role in the axiomatization of de Vries algebras using the strict symmetric implication calculus   $\SSIC$ \cite{BeBeSaVe19,Sa16}. However, in the same study it was shown  that 
these rules were, in fact, \emph{admissible} in the calculus. This crucial finding indicated that they could be omitted in the axiomatization. 

In this paper, we extend this line of work to the calculus $\MSSIC$. Namely, we show that the $\Pi_2$-rule expressing upper continuity is in fact admissible, establishing equivalence of $\MSSIC$ with $\UMSSIC$. As a result, 
$\MSSIC$ is strongly sound and complete with respect to upper continuous modal de Vries algebras and therefore with respect to 
modal compact Hausdorff spaces. Notably, our admissibility proof deviates from the general methods introduced in~\cite{BeBeSaVe19,Sa16}. Our framework lacks the amalgamation/interpolation properties, essential in those methods. Instead, we obtain the admissibility proof by first developing a relational semantics for $\MSSIC$ and subsequently applying  bisimulation expansions.

The paper is organized as follows. In \cref{Sec:Prelim} we recall some preliminary notions about the symmetric strict implication calculus, modal de Vries algebras, and modal compact Hausdorff spaces. In \cref{Sec:Calculus:LMDV:UMDV} we define the modal strict symmetric calculus $\MSSIC$ and its extension $\UMSSIC$. We then prove that $\UMSSIC$ is strongly sound and complete with respect to upper continuous modal de Vries algebras. In \cref{Sec:Admissibility} we establish the admissibility of various $\Pi_2$-rules in $\MSSIC$. As a consequence, we obtain that $\MSSIC$ and $\UMSSIC$ coincide, and that $\MSSIC$ is also strongly sound and complete with respect to finitely additive modal de Vries algebras. A table listing all the classes of algebras considered in the paper can be found on page~\pageref{table}. The Appendix contains the proof of \cref{prop:L Kripke complete}, which states the Kripke completeness of $\MSSIC$ and provides a first-order characterization of the corresponding class of Kripke frames. 

\section{Preliminaries}\label{Sec:Prelim}

In this section we recall the symmetric strict implication calculus and its algebraic and topological semantics, as well as  the definitions of modal compact Hausdorff spaces and modal de Vries algebras.

\subsection{Symmetric strict implication calculus}\label{subsec:SSIC}

The symmetric strict implication calculus $\SSIC$ \cite{BeBeSaVe19,Sa16} is a deductive system in the language $\L$ that extends the language of classical propositional logic with the binary connective $\rightsquigarrow$ of \emph{strict implication}. We write $\univ \phi$ as an abbreviation of $\top \simpa \phi$. We will use the axiomatization of $\SSIC$ from \cite{BeBeSaVe19}, which differs slightly from the equivalent one given in \cite{Sa16}. 

\begin{definition}
The \emph{symmetric strict implication calculus} $\SSIC$ is the deductive system containing all the substitution instances of the theorems of the classical propositional calculus and of the axioms:
\begin{itemize}
\item[(A1)] $(\bot \simpa \phi) \wedge (\phi \simpa \top)$;
\item[(A2)] $[(\phi \vee \psi) \simpa \chi] \biimpl [(\phi \simpa \chi) \wedge (\psi \simpa \chi)]$;
\item[(A3)] $[\phi \simpa (\psi \wedge \chi)] \biimpl [(\phi \simpa \psi) \wedge (\phi \simpa \chi)]$;
\item[(A4)] $(\phi \simpa \psi) \impl (\phi \impl \psi)$;
\item[(A5)] $(\phi \simpa \psi) \biimpl (\ne \psi \simpa \ne \phi)$;
\item[(A8)] $\univ \phi \impl \univ \univ \phi$;
\item[(A9)] $\ne\univ \phi \impl \univ\neg\univ \phi$;
\item[(A10)] $(\phi \simpa \psi) \biimpl \univ (\phi \simpa \psi)$;
\item[(A11)] $\univ \phi \impl (\neg \univ \phi \simpa \bot)$;
\end{itemize}
and is closed under the inference rules
\begin{itemize}
\item[(MP)] $\inference{\phi \quad \phi\to\psi}{\psi}$;
\item[(R)] $\inference{\phi}{\univ \phi}$.
\end{itemize}
\end{definition}

\begin{definition}
A proof of a formula $\phi$ from a set of formulas $\Gamma$ is a finite sequence $\psi_1, \dots, \psi_n$ of formulas such that $\psi_n=\phi$ and each $\psi_i$ is either in $\Gamma$, an instance of an axiom of $\mathsf{S^2IC}$, obtained from $\psi_j,\psi_k$ with $j,k<i$ by applying (MP), or obtained from $\psi_j$ with $j < i$ by applying (R).
If there exists a proof of $\phi$ from $\Gamma$, then we write $\Gamma \vdash_{\SSIC} \phi$. When $\Gamma = \varnothing$, we simply write $\vdash_{\SSIC} \phi$.
\end{definition}

\begin{remark}\label{rem:rightsquigarrow and nabla}
The connective $\rightsquigarrow$ can be equivalently replaced by a binary modality $\nabla$ defined by $\nabla(\phi,\psi)=\neg \phi \rightsquigarrow \psi$, and the rule (R) by two inference rules $\phi/\nabla(\phi, \psi)$ and $\phi/\nabla(\psi, \phi)$.
It can be shown (see \cite{BeBeSaVe19}) that $\SSIC$ meets the requirements of the definition of a modal system given in \cite[p.~3]{BeCaGhLa22}.
\end{remark}

It is shown in \cite[Sec.~5]{BeBeSaVe19} that $\SSIC$ is sound and complete with respect to the classes of contact, compingent, and de Vries algebras. 
We first recall the notion of contact algebra (see, e.g., \cite[Def.~2.1]{DV06}), which plays an important role in region-based theory of space. We will work with its equivalent formulation in terms of subordinations (see, e.g., \cite[p.~214]{DV06} and \cite[Rem.~4.12]{BeBeSoVe17}).

\begin{definition}
A \emph{contact algebra} is a pair $\mathbf{B} = (B, \prec)$, where $B$ is a Boolean algebra and $\prec$ is a binary relation on $B$ satisfying the following conditions:
\begin{enumerate}
\item[(S1)] $0\prec 0$ and $1\prec 1$;
\item[(S2)] $a\prec b$ and $a\prec c$ imply $a\prec b\land c$;
\item[(S3)] $a\prec c$ and $b\prec c$ imply $a\lor b\prec c$;
\item[(S4)] $a\leq b\prec c\leq d$ implies $a\prec d$;
\item[(S5)] $a\prec b$ implies $a\leq b$;
\item[(S6)] $a\prec b$ implies $\neg b\prec \neg a$.
\end{enumerate}
\end{definition}

Let $X$ be a compact Hausdorff space and $\RO(X)$ the complete Boolean algebra of the regular open subsets of $X$ ordered by inclusion. Equipping $\RO(X)$ with the well-inside relation $\prec$ defined by $U \prec V$ iff $\cl(U) \subseteq V$ yields a contact algebra $(\RO(X), \prec)$. These contact algebras satisfy additional conditions defining what are known as de Vries algebras.

\begin{definition}\label{def:contact compingent de Vries}
A contact algebra $\mathbf{B}=(B, \prec)$ is called a \emph{compingent algebra} if it satisfies the following two additional properties: 
\begin{itemize}
\item[(S7)] $a\prec b$ implies there is $c$ with $a\prec c\prec b$;
\item[(S8)] $a\neq 0$ implies there is $b\neq 0$ with $b\prec a$.
\end{itemize}
A compingent algebra $\mathbf{B}$ is called a \emph{de Vries algebra} if $B$ is a complete Boolean algebra.
\end{definition}

By de Vries duality \cite{deV62}, $\RO$ extends to a dual equivalence between the category of compact Hausdorff spaces and the category of de Vries algebras. In particular, every de Vries algebra is isomorphic to one of the form $(\RO(X), \prec)$ for some compact Hausdorff space $X$.

If $\mathbf{B}$ is a contact algebra, we define a binary operation $\rightsquigarrow$ on $\mathbf{B}$ by setting
\[
a \rightsquigarrow b=
\begin{cases}
1 & \text{if $a \prec b$},\\
0 & \text{otherwise.}
\end{cases}
\]
A \emph{valuation} on a contact algebra $\mathbf{B}$ is a  map that assigns elements of $B$ to the propositional letters of the language $\L$. Each valuation $v$ extends to all formulas in $\L$ by setting $v(\phi \rightsquigarrow \psi)=v(\phi) \rightsquigarrow v(\psi)$ and in the usual way for the classical propositional connectives.
We say that a formula $\phi$ is \emph{valid} in a contact algebra $\mathbf{B}$, and write $\mathbf{B} \vDash \phi$, if $v(\phi)=1$ for all valuations $v$ on $\mathbf{B}$. If $\Gamma$ is a set of formulas, then we write $\mathbf{B} \vDash \Gamma$ if $\mathbf{B} \vDash \gamma$ for every $\gamma \in \Gamma$. If $\mathsf{K}$ is a class of contact algebras, then we say that $\phi$ is a semantic consequence of $\Gamma$ over $\mathsf{K}$, and write $\Gamma \vDash_\mathsf{K} \phi$, when for each $\mathbf{B} \in \mathsf{K}$ and valuation $v$ on $\mathbf{B}$ if $v(\gamma)=1$ for every $\gamma \in \Gamma$, then $v(\phi)=1$.

We denote by $\Con$, $\Comp$, and $\DeV$\label{Con,Comp,DeV} the classes of contact, compingent, and de Vries algebras, respectively. The following theorem states that $\mathsf{S^2IC}$ is strongly sound and complete with respect to all these classes of algebras. 

\begin{theorem}\textup{\cite[Thms.~5.2, 5.8, 5.10]{BeBeSaVe19}}\label{thm:alg completeness of SSIC}
For a set of formulas $\Gamma$ and a formula $\phi$, we have
\[
\Gamma \vdash_{\SSIC} \phi \iff \Gamma \vDash_{\Con} \phi \iff \Gamma \vDash_{\Comp} \phi \iff \Gamma \vDash_{\DeV} \phi.
\]
\end{theorem}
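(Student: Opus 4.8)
The plan is to prove the four-fold equivalence as a cycle
\[
\Gamma\vdash_\SSIC\phi \ \Rightarrow\ \Gamma\vDash_\Con\phi \ \Rightarrow\ \Gamma\vDash_\Com\phi \ \Rightarrow\ \Gamma\vDash_\DeV\phi \ \Rightarrow\ \Gamma\vdash_\SSIC\phi,
\]
in which the two middle implications are free of content. Since every de Vries algebra is compingent and every compingent algebra is a contact algebra, we have $\DeV\subseteq\Com\subseteq\Con$, and a semantic-consequence statement that holds over a larger class automatically holds over every subclass; this yields $\vDash_\Con\Rightarrow\vDash_\Com\Rightarrow\vDash_\DeV$ immediately. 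Thus only the first arrow (soundness over the largest class) and the last arrow (completeness over the smallest class) require real work.

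For soundness $\Gamma\vdash_\SSIC\phi\Rightarrow\Gamma\vDash_\Con\phi$, I would fix a contact algebra $\mathbf{B}$ and a valuation $v$ with $v(\gamma)=1$ for all $\gamma\in\Gamma$, and argue by induction on the length of a derivation that every formula occurring in it receives value $1$. Each of the axioms (A1)--(A5) and (A8)--(A11) is checked to be valid in $\mathbf{B}$ directly from (S1)--(S6), the definition of $\simpa$, and the reading $\univ\chi=\top\simpa\chi$; (MP) preserves the value $1$ by Boolean reasoning. The point deserving care is the rule (R), which must preserve truth under the \emph{fixed} valuation $v$, not merely preserve validity. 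This holds because $v(\univ\psi)=v(\top\simpa\psi)=1\simpa v(\psi)$, and the latter equals $1$ as soon as $v(\psi)=1$, using $1\prec 1$ from (S1). Hence the whole derivation stays at value $1$, so $v(\phi)=1$, giving \emph{strong} soundness.

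The substantial direction is completeness $\Gamma\vDash_\DeV\phi\Rightarrow\Gamma\vdash_\SSIC\phi$, which I would prove contrapositively: assuming $\Gamma\not\vdash_\SSIC\phi$, I would build a de Vries algebra and a valuation validating $\Gamma$ while refuting $\phi$. First a Lindenbaum--Tarski construction produces a canonical contact algebra: by a Lindenbaum lemma (available since the calculus is finitary), extend $\Gamma$ to a maximal $\SSIC$-consistent set $\Gamma^\ast$ with $\phi\notin\Gamma^\ast$, and quotient the formula algebra by $\psi\equiv\chi$ iff $\univ(\psi\biimpl\chi)\in\Gamma^\ast$. Maximality decides every formula, and the $\univ$-axioms (A8)--(A11), in particular (A10), make $\simpa$- and $\univ$-formulas $\univ$-stable and compatible with the congruence, so that $[\psi]\prec[\chi]$ iff $(\psi\simpa\chi)\in\Gamma^\ast$ is well defined and $\{0,1\}$-valued and the quotient map is a homomorphism for $\simpa$; axioms (A1)--(A5) then yield (S1)--(S6). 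The canonical valuation validates $\Gamma$ (since $\gamma\in\Gamma\subseteq\Gamma^\ast$ gives $\univ\gamma\in\Gamma^\ast$ by closure under (R)) and refutes $\phi$ (since $\univ\phi\impl\phi$ is a theorem, $\phi\notin\Gamma^\ast$ forces $[\phi]\neq 1$).

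It remains to pass from this contact algebra to a de Vries algebra, and here I expect the main obstacle. Two things are needed: the canonical algebra must satisfy the \emph{existential} conditions (S7) and (S8) of compingency, and it must then be completed to a complete Boolean algebra while preserving $\prec$. The first is delicate precisely because (S7)--(S8) assert the existence of witnessing elements, which no single axiom can guarantee in a term model; this is the role of the $\Pi_2$-rules expressing (S7) and (S8). Since those rules are admissible in $\SSIC$ (as recalled in the introduction and established in \cite{BeBeSaVe19}), one may add them without enlarging the set of theorems, thereby making the canonical algebra compingent at no cost to the hypothesis $\Gamma\not\vdash_\SSIC\phi$. For the second, I would take the de Vries (round-filter) completion of this compingent algebra: a complete Boolean algebra into which it embeds densely via a Boolean homomorphism $e$ with $a\prec b\iff e(a)\prec e(b)$. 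Transporting the valuation along $e$ preserves the interpretation of every connective, including $\simpa$ (as $e$ fixes $0,1$ and reflects $\prec$), so the resulting de Vries valuation still validates $\Gamma$ and refutes $\phi$, whence $\Gamma\not\vDash_\DeV\phi$ and the cycle closes. The crux of the whole argument is therefore the interplay between compingency, secured by the admissible $\Pi_2$-rules, and the $\prec$-preserving de Vries completion.
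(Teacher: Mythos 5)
Your architecture (soundness over $\Con$, trivial restriction along $\DeV\subseteq\Com\subseteq\Con$, completeness over $\DeV$ via a term model followed by a $\prec$-preserving completion) matches the structure of the cited proof in \cite{BeBeSaVe19}, and your soundness argument is correct, including the key observation that (R) preserves truth under a \emph{fixed} valuation because $1\prec 1$. The genuine gap is in the step from the canonical contact algebra to a compingent one. You claim that, since ($\rho$6) and ($\rho$7) are admissible, ``one may add them without enlarging the set of theorems, thereby making the canonical algebra compingent.'' This is a non sequitur: admissibility says precisely that adding the rules changes neither the theorems nor, via the deduction theorem, the consequence relation, so it changes nothing about your Lindenbaum quotient. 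That quotient is in general \emph{not} compingent: if $\psi\rightsquigarrow\chi\in\Gamma^\ast$, nothing in your construction produces a formula $\theta$ with $\psi\rightsquigarrow\theta\in\Gamma^\ast$ and $\theta\rightsquigarrow\chi\in\Gamma^\ast$, and (S8) fails for the same reason. What the rules actually buy is a Henkin-style \emph{saturation}: one must work in a language enriched with countably many fresh variables and build $\Gamma^\ast$ by interleaving maximalization with contrapositive applications of the rules (if $(\phi\rightsquigarrow\psi)\to\chi$ is underivable, then for fresh $p$ so is $(\phi\rightsquigarrow p)\land(p\rightsquigarrow\psi)\to\chi$, so a witness can be added while keeping the target underivable). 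That saturation argument is the substance of the general $\Pi_2$-completeness theorem (\cref{thm:completeness pi2 rules}), and it---or a substitute such as the step-by-step $\prec$-embedding of a countable contact algebra into a compingent one used in \cite{BeBeSaVe19}, or the first-order compactness route as in \cref{thm:strong sound and complete UMC}---is exactly what is missing from your proof.

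Two smaller points. First, ``maximality decides every formula'' is false for the global consequence relation of $\SSIC$, in which premises are subject to (R): if neither $p$ nor $\neg p$ lies in a maximal $\phi$-avoiding $\Gamma^\ast$, the deduction theorem only yields $\Gamma^\ast\vdash\univ p\to\phi$ and $\Gamma^\ast\vdash\univ\neg p\to\phi$, which is no contradiction. What your construction actually needs is that all formulas $\psi\rightsquigarrow\chi$ are decided, and this does hold, since by (A10) and (A9) both $\psi\rightsquigarrow\chi$ and its negation are provably $\univ$-stable; so this defect is reparable, but the claim as stated is wrong. Second, the final completion step is fine and is the same device as the non-modal version of \cref{Lemma:MN:Completion} (MacNeille completion; de Vries's round-filter completion also works): since the embedding is an injective Boolean homomorphism preserving and reflecting $\prec$, transporting the valuation preserves the value of every formula, so validity of $\Gamma$ and refutation of $\phi$ survive.
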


We write $\Gamma \vDash_{\KHaus} \varphi$ to denote that a formula $\varphi$ is a semantic consequence of a set of formulas $\Gamma$ with respect to the class of contact algebras of the form $(\RO(X), \prec)$ for some compact Hausdorff space $X$. The following theorem, which is a consequence of \cref{thm:alg completeness of SSIC} and de Vries duality, states that $\SSIC$ is strongly sound and complete with respect to such class of contact algebras. Thus, $\SSIC$ can be thought of as a logical calculus for compact Hausdorff spaces.

\begin{theorem}\textup{\cite[Thm.~5.10]{BeBeSaVe19}}
For a set of formulas $\Gamma$ and a formula $\phi$, we have
\[
\Gamma \vdash_{\SSIC} \phi \iff \Gamma \vDash_{\KHaus} \phi.
\]
\end{theorem}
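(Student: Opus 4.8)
The plan is to deduce this theorem directly from the two results already available in the excerpt, namely \cref{thm:alg completeness of SSIC}, which gives
\[
\Gamma \vdash_{\SSIC} \phi \iff \Gamma \vDash_{\DeV} \phi,
\]
and de Vries duality, which says that every de Vries algebra is isomorphic to one of the form $(\RO(X), \prec)$ for a compact Hausdorff space $X$, and conversely that each such $(\RO(X), \prec)$ is a de Vries algebra. The target semantic consequence relation $\vDash_{\mathsf{KHaus}}$ is by definition exactly the consequence relation over the class of algebras $(\RO(X), \prec)$, so the whole statement reduces to showing that this class generates the same consequence relation as the full class $\DeV$. Thus it suffices to chain the $\SSIC$-completeness over $\DeV$ with an identification of $\vDash_{\DeV}$ and $\vDash_{\mathsf{KHaus}}$.

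First I would establish $\vDash_{\DeV} \phi \implies \vDash_{\mathsf{KHaus}} \phi$ (keeping $\Gamma$ throughout). This is the easy inclusion: every algebra of the form $(\RO(X), \prec)$ is a de Vries algebra, so the class witnessing $\vDash_{\mathsf{KHaus}}$ is a subclass of $\DeV$, and a consequence that holds over a larger class automatically holds over any subclass. Combining this with the direction $\Gamma \vdash_{\SSIC} \phi \implies \Gamma \vDash_{\DeV} \phi$ of \cref{thm:alg completeness of SSIC} gives soundness: $\Gamma \vdash_{\SSIC} \phi \implies \Gamma \vDash_{\mathsf{KHaus}} \phi$.

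Next I would prove the converse (completeness) direction $\Gamma \vDash_{\mathsf{KHaus}} \phi \implies \Gamma \vdash_{\SSIC} \phi$. By \cref{thm:alg completeness of SSIC} it is enough to show $\Gamma \vDash_{\mathsf{KHaus}} \phi \implies \Gamma \vDash_{\DeV} \phi$. So suppose the consequence fails over $\DeV$: there is a de Vries algebra $\mathbf{B}$ and a valuation $v$ on $\mathbf{B}$ with $v(\gamma) = 1$ for all $\gamma \in \Gamma$ but $v(\phi) \neq 1$. By de Vries duality, $\mathbf{B} \isom (\RO(X), \prec)$ for some compact Hausdorff space $X$; transporting $v$ across this isomorphism yields a valuation $v'$ on $(\RO(X), \prec)$ that, since isomorphisms preserve the interpretation of all connectives (including $\rightsquigarrow$, as $\prec$ is preserved) and the top element, refutes $\phi$ while verifying every $\gamma$. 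Hence $\Gamma \vDash_{\mathsf{KHaus}} \phi$ also fails, which is the required contrapositive.

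The only point demanding genuine care—the step I expect to be the main (if modest) obstacle—is justifying that the isomorphism $\mathbf{B} \isom (\RO(X), \prec)$ furnished by de Vries duality transfers valuations and the validity relation faithfully. One must check that a de Vries isomorphism is in particular a Boolean isomorphism that additionally preserves and reflects $\prec$, so that it preserves the derived operation $\rightsquigarrow$ and therefore commutes with $v$ on all of $\L$; this guarantees $v(\chi) = 1$ in $\mathbf{B}$ iff $v'(\chi) = 1$ in $(\RO(X), \prec)$ for every formula $\chi$. This is routine once the notion of de Vries morphism is in hand, and with it the equivalence $\Gamma \vDash_{\DeV} \phi \iff \Gamma \vDash_{\mathsf{KHaus}} \phi$ follows, completing the chain of equivalences asserted in the theorem.
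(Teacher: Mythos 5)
Your proposal is correct and is precisely the argument the paper has in mind: the theorem is presented as ``a consequence of de Vries duality,'' obtained by chaining the algebraic completeness $\Gamma \vdash_{\SSIC} \phi \iff \Gamma \vDash_{\DeV} \phi$ with the representation of every de Vries algebra as some $(\RO(X), \prec)$, exactly as you do. Your added care about transporting valuations along a de Vries isomorphism (a Boolean isomorphism preserving and reflecting $\prec$, hence the derived operation $\rightsquigarrow$) is the right point to check and is handled correctly.
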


\subsection{Modal de Vries algebras and modal compact Hausdorff spaces}

Descriptive frames (aka modal spaces) play an important role in modal logic as they form a category dually equivalent to the category of modal algebras. Modal compact Hausdorff spaces \cite{BeBeHa15} are a generalization of descriptive frames.
If $R$ is a binary relation on a set $X$ and $F,G \subseteq X$, we write
\[
R[F]=\{y : xRy \text{ for some } x \in F\} \quad \text{and} \quad  R^{-1}[G]=\{x : xRy \text{ for some } y \in G\}.
\]
When $x,y \in X$, we write $R[x]$ and $R^{-1}[y]$ instead of $R[\{x\}]$ and $R^{-1}[\{y\}]$, respectively.

\begin{definition}
\mbox{}\begin{enumerate}
\item A binary relation $R$ on a compact Hausdorff space $X$ is said to be \emph{continuous} provided
\begin{enumerate}[label=(\roman*)]
\item $R[x]$ is closed for each $x\in X$;
\item $R^{-1}[F]$ is closed for each closed $F\subseteq X$;
\item $R^{-1}[G]$ is open for each open $G\subseteq X$.
\end{enumerate}
\item We call a pair $(X,R)$ a \emph{modal compact Hausdorff space} if $X$ is a compact Hausdorff space and $R$ a continuous relation on $X$.
\end{enumerate}
\end{definition}

Modal de Vries algebras \cite{BeBeHa15} were introduced as algebraic counterparts of modal compact Hausdorff spaces. For our purposes it is convenient to generalize the definition of a modal operator to the setting of contact algebras.

\begin{definition}
Let $(B, \prec)$ be a contact algebra. We call an operator $\Diamond\colon B\to B$ \emph{de Vries additive} if 
\begin{enumerate}
\item $\Diamond 0=0$;
\item $a_1\prec b_1$ and $a_2\prec b_2$ imply $\Diamond(a_1\lor a_2)\prec(\Diamond b_1\lor\Diamond b_2)$.
\end{enumerate}
A \emph{modal contact algebra} is a triple $(B,\prec,\Diamond)$ where $(B,\prec)$ is a contact algebra and $\Diamond$ is de Vries additive.
A modal contact algebra $(B,\prec,\Diamond)$ is called a \emph{modal compingent algebra} or a \emph{modal de Vries algebra} if $(B, \prec)$ is a compingent algebra or a de Vries algebra, respectively.
\end{definition}

We say that $\Diamond$ is \emph{finitely additive} if it preserves finite joins, and that it is \emph{proximity preserving} is $a\prec b$ implies $\Diamond a\prec \Diamond b$. A de Vries additive operator $\Diamond$ is always proximity preserving but not necessarily finitely additive nor order preserving (see \cite[Ex.~4.9]{BeBeHa15}). The following proposition is a straightforward generalization of \cite[Props.~4.8,~4.10]{BeBeHa15} to operators on contact algebras.

\begin{proposition}\label{prop:fin add iff prox pres}
A finitely additive operator on a contact algebra is de Vries additive iff it is proximity preserving.
\end{proposition}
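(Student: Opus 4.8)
The plan is to prove the two implications separately, observing at the outset that the forward direction does not actually require finite additivity. For the implication that de Vries additivity implies proximity preservation, I would instantiate the second de Vries additivity condition along the diagonal: given $a \prec b$, set $a_1 = a_2 = a$ and $b_1 = b_2 = b$, so that both hypotheses $a_1 \prec b_1$ and $a_2 \prec b_2$ are satisfied, and the conclusion reads $\Diamond(a \vee a) \prec \Diamond b \vee \Diamond b$. By idempotency of join this is precisely $\Diamond a \prec \Diamond b$, which is proximity preservation.

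For the converse, assume $\Diamond$ is finitely additive and proximity preserving, and verify the two clauses in the definition of de Vries additivity. The first clause, $\Diamond 0 = 0$, is immediate from finite additivity applied to the empty join. For the second, suppose $a_1 \prec b_1$ and $a_2 \prec b_2$. Proximity preservation yields $\Diamond a_1 \prec \Diamond b_1$ and $\Diamond a_2 \prec \Diamond b_2$. I would then enlarge each consequent using (S4): since $\Diamond b_i \leq \Diamond b_1 \vee \Diamond b_2$, we obtain $\Diamond a_1 \prec \Diamond b_1 \vee \Diamond b_2$ and $\Diamond a_2 \prec \Diamond b_1 \vee \Diamond b_2$. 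Combining these by (S3) gives $\Diamond a_1 \vee \Diamond a_2 \prec \Diamond b_1 \vee \Diamond b_2$, and finally finite additivity rewrites the left-hand side as $\Diamond(a_1 \vee a_2)$, delivering the required $\Diamond(a_1 \vee a_2) \prec \Diamond b_1 \vee \Diamond b_2$.

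There is no substantial obstacle: the argument is a short chase through the contact axioms (S3) and (S4) together with the defining equations of a finitely additive operator. The only points demanding care are, first, confirming that the phrase \emph{preserves finite joins} is taken to include the nullary case, so that $\Diamond 0 = 0$ comes for free (otherwise this clause must be noted separately), and second, applying (S4) before (S3) so that each consequent is weakened to the common upper bound $\Diamond b_1 \vee \Diamond b_2$ before the two relations are merged and finite additivity is invoked on the left.
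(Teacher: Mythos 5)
Your proof is correct and is precisely the routine verification the paper has in mind: the paper itself gives no proof, deferring to \cite[Props.~4.8, 4.10]{BeBeHa15} with the remark that the generalization to contact algebras is straightforward, and your argument (diagonal instantiation for one direction; (S4), then (S3), then finite additivity for the other) uses only the contact axioms, which is exactly why that generalization goes through. Your two cautionary points are also well taken, in particular that ``preserves finite joins'' is read to include the empty join, so that $\Diamond 0 = 0$ is part of finite additivity.
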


An important role in this paper is played by the de Vries additive operators that are upper continuous.

\begin{definition}\label{def:upper continuous}
An operator $\Diamond$ on a contact algebra $\mathbf{B}$ is \emph{upper continuous} if for each $a \in B$ the meet of the set $\{\Diamond b: a\prec b\}$ exists and is equal to $\Diamond a$. We call a modal contact algebra with an upper continuous operator an \emph{upper continuous modal contact algebra}. Upper continuous modal contact algebras and upper continuous modal de Vries algebras are defined similarly.
\end{definition}

As an immediate generalization of \cite[Prop.~4.15]{BeBeHa15} to the setting of contact algebras, we obtain:

\begin{proposition}\label{prop:upper cont implies finitely additive}
An upper continuous de Vries additive operator on a contact algebra is order preserving and finitely additive.
\end{proposition}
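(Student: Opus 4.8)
The plan is to first establish order-preservation and then to deduce finite additivity, using upper continuity to rewrite $\Diamond$ as a meet throughout.

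For order-preservation, suppose $a \le c$. By upper continuity $\Diamond c = \bigwedge\{\Diamond b : c \prec b\}$, so it suffices to show $\Diamond a \le \Diamond b$ for every $b$ with $c \prec b$. Given such a $b$, the chain $a \le c \prec b \le b$ together with (S4) yields $a \prec b$. Applying the second de Vries additivity clause with $a_1 = a_2 = a$ and $b_1 = b_2 = b$ gives $\Diamond a \prec \Diamond b$, whence $\Diamond a \le \Diamond b$ by (S5). Thus $\Diamond a$ is a lower bound of $\{\Diamond b : c \prec b\}$ and therefore $\Diamond a \le \Diamond c$.

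For finite additivity I would prove the two inequalities separately. Since $a, c \le a \lor c$, order-preservation gives $\Diamond a, \Diamond c \le \Diamond(a \lor c)$, hence $\Diamond a \lor \Diamond c \le \Diamond(a \lor c)$. For the converse, note first that whenever $a \prec b_1$ and $c \prec b_2$, the second de Vries additivity clause and (S5) give $\Diamond(a \lor c) \le \Diamond b_1 \lor \Diamond b_2$. The idea is to turn this family of inequalities into $\Diamond(a \lor c) \le \Diamond a \lor \Diamond c$ by eliminating $b_1$ and $b_2$ one at a time through the Boolean equivalence $x \le y \lor z \iff x \land \neg z \le y$. Fixing $b_2$ with $c \prec b_2$ and letting $b_1$ range over $\{b_1 : a \prec b_1\}$, rewrite the inequality as $\Diamond(a \lor c) \land \neg \Diamond b_2 \le \Diamond b_1$; since this holds for all such $b_1$ and $\Diamond a = \bigwedge\{\Diamond b_1 : a \prec b_1\}$ by upper continuity, we get $\Diamond(a \lor c) \land \neg \Diamond b_2 \le \Diamond a$, i.e.\ $\Diamond(a \lor c) \land \neg \Diamond a \le \Diamond b_2$. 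As this now holds for every $b_2$ with $c \prec b_2$ and $\Diamond c = \bigwedge\{\Diamond b_2 : c \prec b_2\}$, upper continuity again yields $\Diamond(a \lor c) \land \neg \Diamond a \le \Diamond c$, that is $\Diamond(a \lor c) \le \Diamond a \lor \Diamond c$.

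The main obstacle is exactly this converse inequality: the tempting shortcut $(\bigwedge_i \Diamond b_1^i) \lor (\bigwedge_j \Diamond b_2^j) = \bigwedge_{i,j}(\Diamond b_1^i \lor \Diamond b_2^j)$ is an infinitary distributive law that need not hold in the (possibly incomplete) Boolean algebra $B$. Peeling off the two meets separately via Boolean residuation sidesteps this issue, since each elimination step only uses that a single fixed element is a lower bound of the relevant set, which is precisely what upper continuity guarantees. All remaining steps are routine applications of the contact-algebra axioms (S4) and (S5) and the de Vries additivity clauses.
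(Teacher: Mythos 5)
Your proof is correct. Note that the paper itself contains no proof of this proposition---it is quoted from Prop.~4.15 of \cite{BeBeHa15}---so the relevant comparison is with that reference, where the result is proved for modal de Vries algebras, whose underlying Boolean algebras are complete; there the hard inequality $\Diamond(a\lor c)\le\Diamond a\lor\Diamond c$ can be finished by appealing to the join-infinite distributive law of complete Boolean algebras. Your argument has the same skeleton (order-preservation from proximity preservation plus upper continuity, then the two inequalities for binary joins), and your residuation step is exactly the extra care needed for the statement as formulated in this paper, which is about arbitrary, possibly incomplete, contact algebras. One correction to your commentary, though: the distributive law you say ``need not hold'' does in fact hold for \emph{existing} meets in any Boolean algebra, since $y\mapsto x\lor y$ preserves existing meets---the proof being precisely the residuation $z\le x\lor y \iff z\land\neg x\le y$ that you invoke. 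So your two-step peeling is not a detour around a failure of distributivity but a proof of the instance of it that you need; its genuine benefit is that you never have to establish existence of the doubly-indexed meet $\bigwedge\{\Diamond b_1\lor\Diamond b_2 : a\prec b_1,\ c\prec b_2\}$. Finally, for completeness recall that finite additivity also requires $\Diamond 0=0$, which here is immediate from the first clause of de Vries additivity.
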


\begin{definition}
If $(X,R)$ is a modal compact Hausdorff space, then we denote by $\Diamond^U$ the operator on the de Vries algebra $\RO(X)$ defined by
\[
\Diamond^U O = \int (R^{-1}[\cl (O)]).
\]
\end{definition}

\begin{theorem}\textup{\cite[Thm.~5.8]{BeBeHa15}}\label{thm:modal KHaus to upper}
If $(X,R)$ is a modal compact Hausdorff space, then $(\RO(X), \prec, \Diamond^U)$ is an upper continuous modal de Vries algebra.
\end{theorem}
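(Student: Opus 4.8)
The plan is to verify that the operator $\Diamond^U$ defined by $\Diamond^U O = \int(R^{-1}[\cl(O)])$ is (i) de Vries additive and (ii) upper continuous on the de Vries algebra $(\RO(X), \prec)$, since the underlying $(\RO(X), \prec)$ is already known to be a de Vries algebra by de Vries duality. Throughout I would freely use the closure/interior operators of the compact Hausdorff space $X$, the three continuity conditions (i)--(iii) on $R$, and the fact that for regular open sets the well-inside relation is $U \prec V \iff \cl(U) \subseteq V$. A recurring technical fact I expect to need is that for $O \in \RO(X)$ the set $R^{-1}[\cl(O)]$ is closed (by continuity condition (ii), since $\cl(O)$ is closed), so that $\Diamond^U O = \int(R^{-1}[\cl(O)])$ is genuinely open, and in fact regular open after taking $\int\cl$ where needed.

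\smallskip

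\noindent\textbf{De Vries additivity.} For condition (1), $\Diamond^U \emptyset = \int(R^{-1}[\cl(\emptyset)]) = \int(R^{-1}[\emptyset]) = \int(\emptyset) = \emptyset$, using that $\cl(\emptyset)=\emptyset$. For condition (2), suppose $O_1 \prec P_1$ and $O_2 \prec P_2$ in $\RO(X)$, i.e. $\cl(O_1)\subseteq P_1$ and $\cl(O_2)\subseteq P_2$. I must show $\Diamond^U(O_1 \vee O_2) \prec \Diamond^U P_1 \vee \Diamond^U P_2$, that is, $\cl\bigl(\Diamond^U(O_1\vee O_2)\bigr) \subseteq \Diamond^U P_1 \vee \Diamond^U P_2$, where the join in $\RO(X)$ is $\int\cl(\,\cdot\cup\cdot\,)$. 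First I would compute $\cl(O_1 \vee O_2) = \cl(O_1)\cup\cl(O_2)$ (closure commutes with finite unions), so $R^{-1}[\cl(O_1\vee O_2)] = R^{-1}[\cl(O_1)]\cup R^{-1}[\cl(O_2)]$. The key step is to pass from the well-inside hypotheses to an inclusion of the form $\cl\bigl(R^{-1}[\cl(O_i)]\bigr) \subseteq \int\bigl(R^{-1}[\cl(P_i)]\bigr)$, exploiting that $\cl(O_i)\subseteq P_i$ leaves room to insert an open set $U$ with $\cl(O_i)\subseteq U \subseteq \cl(U)\subseteq P_i$ (normality of $X$), and then using that $R^{-1}$ sends opens to opens (condition (iii)) and closeds to closeds (condition (ii)) to separate $R^{-1}[\cl(O_i)]$ from the complement of $R^{-1}[\cl(P_i)]$. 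Assembling the two indices and taking interiors/closures then yields the required proximity inequality; I would take care that the finite-union manipulations interact correctly with $\int\cl$.

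\smallskip

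\noindent\textbf{Upper continuity.} I must show that $\bigwedge\{\Diamond^U P : O \prec P\} = \Diamond^U O$ for every $O \in \RO(X)$. The inequality $\Diamond^U O \leq \Diamond^U P$ whenever $O\prec P$ follows from monotonicity of the construction: $O\prec P$ gives $\cl(O)\subseteq P \subseteq \cl(P)$, hence $R^{-1}[\cl(O)]\subseteq R^{-1}[\cl(P)]$ and so $\Diamond^U O \subseteq \Diamond^U P$; thus $\Diamond^U O$ is a lower bound. For the reverse direction I need that any lower bound of $\{\Diamond^U P : O\prec P\}$ lies below $\Diamond^U O$, equivalently that $\bigcap_{O\prec P}\Diamond^U P$ has interior-of-closure contained in $\Diamond^U O$. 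The crucial ingredient here is condition (i), that $R[x]$ is closed for each $x$, together with compactness: I would take a point $x \in \bigcap_{O\prec P}\cl(\Diamond^U P)$ in the closure of each and argue, by a compactness/finite-intersection argument over the directed family of $P$'s with $O\prec P$ (using (S7), i.e. interpolation in the compingent structure, to make the family directed), that $R[x]\cap \cl(O)\neq\emptyset$, which places $x \in R^{-1}[\cl(O)]$ and hence in $\cl(\Diamond^U O)$.

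\smallskip

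\noindent The main obstacle I anticipate is this reverse inclusion for upper continuity: translating the purely algebraic meet condition into a topological separation/compactness argument requires carefully using that $R[x]$ is closed (condition (i)) to run a finite-intersection argument against the closed sets $\cl(P)$ as $P$ ranges over the interpolants of $O$, and showing the limiting set is exactly $\cl(O)$ rather than something larger. The de Vries additivity steps are more routine point-set topology, the only delicacy being the bookkeeping of $\int$ and $\cl$ forced by computing joins inside $\RO(X)$ rather than in the powerset. Finally, I would invoke \cref{prop:upper cont implies finitely additive} only if convenient, but the direct verification above should suffice to conclude that $(\RO(X),\prec,\Diamond^U)$ is an upper continuous modal de Vries algebra.
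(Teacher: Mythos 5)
This theorem is imported by the paper verbatim from \cite[Thm.~5.8]{BeBeHa15} and no proof of it appears in the paper, so your sketch can only be judged on its own merits. Its architecture is sound: $(\RO(X),\prec)$ is a de Vries algebra by de Vries duality; $\Diamond^U O=\int(R^{-1}[\cl(O)])$ is regular open because the interior of a closed set is regular open; your de Vries additivity argument goes through as outlined (in fact you do not even need normality: $P_i$ is itself an open set squeezed between $\cl(O_i)$ and $\cl(P_i)$, so conditions (ii) and (iii) give $R^{-1}[\cl(O_i)]\subseteq R^{-1}[P_i]\subseteq\int(R^{-1}[\cl(P_i)])=\Diamond^U P_i$ directly, and then $\cl(\Diamond^U(O_1\vee O_2))\subseteq R^{-1}[\cl(O_1)]\cup R^{-1}[\cl(O_2)]\subseteq\Diamond^U P_1\vee\Diamond^U P_2$ because the middle set is closed); and the compactness/finite-intersection argument driven by condition (i) is the right engine for upper continuity. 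Two details, however, are off.

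First, the closing inference of your upper-continuity argument, ``$x\in R^{-1}[\cl(O)]$ and hence $x\in\cl(\Diamond^U O)$,'' is false in general: $R^{-1}[\cl(O)]$ is closed but need not be regular closed. Take $X=[-1,1]$ with $xRy$ iff $y=-|x|$ (a continuous relation, being induced by a continuous map) and $O=(0,1/2)$: then $R^{-1}[\cl(O)]=\{0\}$ while $\Diamond^U O=\emptyset$, so $\cl(\Diamond^U O)=\emptyset$. Fortunately the step is superfluous: the object you must bound is the meet $\int\cl\bigl(\bigcap\{\Diamond^U P:O\prec P\}\bigr)$, and from what your compactness argument actually establishes, namely $\bigcap\{\cl(\Diamond^U P):O\prec P\}\subseteq R^{-1}[\cl(O)]$, you can conclude by applying $\int$ and using that $R^{-1}[\cl(O)]$ is closed, so $\int(R^{-1}[\cl(O)])=\Diamond^U O$; no regular-closedness is needed. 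Second, the fact you flag but leave unproved, $\bigcap\{\cl(P):O\prec P\}=\cl(O)$, is genuinely needed and is exactly where the Hausdorff hypothesis enters: for $y\notin\cl(O)$, regularity of the compact Hausdorff space yields an open $U$ with $\cl(O)\subseteq U$ and $y\notin\cl(U)$, and $P=\int(\cl(U))$ is then regular open with $O\prec P$ and $y\notin\cl(P)$. (Also, the downward directedness of $\{\cl(P):O\prec P\}$ comes from closure of $\RO(X)$ under finite intersections, an (S2)-style fact, not from the interpolation axiom (S7).) With these two repairs your argument is complete.
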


By defining appropriate morphisms, the classes of upper continuous modal de Vries algebras and modal compact Hausdorff spaces become categories, which are dually equivalent to each other (see \cite[Thm.~5.14]{BeBeHa15}). In particular, we have the following representation result, where isomorphisms in the category of upper continuous de Vries algebras are structure preserving bijections (see \cite[Prop.~4.19(3)]{BeBeHa15}).

\begin{theorem}\textup{\cite[Thm.~5.11]{BeBeHa15}}\label{thm:upper to modal KHaus}
Each upper continuous modal de Vries algebra is isomorphic to one of the form $(\RO(X), \prec, \Diamond^U)$ for a modal compact Hausdorff space $(X,R)$. 
\end{theorem}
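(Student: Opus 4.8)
The plan is to handle the underlying de Vries algebra by de Vries duality and to build the continuous relation out of the modal operator in the style of Jónsson--Tarski duality, the whole point being to verify that the operator is recovered as $\Diamond^U$ precisely because it is upper continuous. So let $(B,\prec,\Diamond)$ be an upper continuous modal de Vries algebra; by \cref{prop:upper cont implies finitely additive} I may use that $\Diamond$ is order-preserving and finitely additive. By de Vries duality there are a compact Hausdorff space $X$ and a de Vries isomorphism $\eta\colon(B,\prec)\xrightarrow{\cong}(\RO(X),\prec)$; concretely $X$ is the space of ends (maximal round filters) of $B$, with $\eta(a)=\{x\in X:a\in x\}$, $\eta$ an isomorphism of complete Boolean algebras, and $a\prec b\iff\cl\eta(a)\subseteq\eta(b)$. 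I would then define a relation $R$ on $X$ by declaring $x\mathrel R y$ iff $\Diamond a\in x$ for every $a\in y$; writing $T_x=\{a\in B:\Diamond a\in x\}$, this is just $x\mathrel R y\iff y\subseteq T_x$. It remains to show that $R$ is continuous and that $\eta$ carries $\Diamond$ to $\Diamond^U$, i.e. $\eta(\Diamond c)=\Diamond^U\eta(c)$ for all $c$; granting this, $(X,R)$ is a modal compact Hausdorff space and, by \cref{thm:modal KHaus to upper}, $\eta$ becomes an isomorphism of upper continuous modal de Vries algebras.

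The technical core is an extension lemma: if $\Diamond b\in x$ and $b\prec a$, then there is an end $y$ with $a\in y$ and $y\subseteq T_x$. I would start from $F=\{d:b\prec d\}$, which is a round filter (interpolation (S7) gives roundness) containing $a$, and which lies in $T_x$ because $b\prec d$ forces $\Diamond b\le\Diamond d$ and hence $\Diamond d\in x$. By Zorn's lemma extend $F$ to a maximal round filter $y$ with $F\subseteq y\subseteq T_x$, and check that such a $y$ is automatically an end: if $c_1\vee c_2\in y$ but $c_1,c_2\notin y$, then maximality forces the round filter generated by $y\cup\{c_i\}$ out of $T_x$, yielding $g_i\in y$ with $\Diamond(g_i\wedge c_i)\notin x$; setting $g=g_1\wedge g_2\in y$ and using finite additivity $\Diamond(g\wedge(c_1\vee c_2))=\Diamond(g\wedge c_1)\vee\Diamond(g\wedge c_2)\in x$ together with primeness of the end $x$ gives $\Diamond(g\wedge c_i)\in x$ for some $i$, contradicting $\Diamond(g_i\wedge c_i)\notin x$. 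With the lemma available, condition (i) is immediate since $R[x]=\bigcap_{\Diamond a\notin x}(X\setminus\eta(a))$ is closed; condition (iii) follows from $R^{-1}[\eta(a)]=\bigcup\{\eta(\Diamond b):b\prec a\}$, which is open; and condition (ii) follows by compactness: if $R[x_0]\cap F=\emptyset$ for a closed $F$, cover $F$ by finitely many $\eta(c_j')$ with $c_j'\prec c_j$ and $\Diamond c_j\notin x_0$, and use $c_j'\prec c_j$ (so $\cl\eta(\Diamond c_j')\subseteq\eta(\Diamond c_j)$) to get $\neg\Diamond c_j'\in x_0$, whence $\eta(\bigwedge_j\neg\Diamond c_j')$ is a neighbourhood of $x_0$ disjoint from $R^{-1}[F]$.

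It remains to prove $\eta(\Diamond c)=\Diamond^U\eta(c)=\int R^{-1}[\cl\eta(c)]$. For $\subseteq$, take $x$ with $\Diamond c\in x$, choose $e\in x$ with $e\prec\Diamond c$ by roundness, and note that for every $x'\in\eta(e)$ the round filter $F'=\{d:c\prec d\}$ lies in $T_{x'}$ (as $c\prec d$ gives $\Diamond c\prec\Diamond d$, hence $e\prec\Diamond d$ and $\Diamond d\in x'$); extending $F'$ to an end $y\subseteq T_{x'}$ as before and observing that any end containing $F'$ satisfies $\neg c\notin y$, one gets $x'\mathrel R y$ with $y\in\cl\eta(c)$, so $\eta(e)\subseteq R^{-1}[\cl\eta(c)]$ and $x\in\Diamond^U\eta(c)$. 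The reverse inclusion is where upper continuity is indispensable: it suffices to prove $R^{-1}[\cl\eta(c)]\subseteq\cl\eta(\Diamond c)$, since then $\int R^{-1}[\cl\eta(c)]\subseteq\int\cl\eta(\Diamond c)=\eta(\Diamond c)$. If $x\in R^{-1}[\cl\eta(c)]$ with witness $y$, then $F'\subseteq y\subseteq T_x$, so $\{\Diamond d:c\prec d\}\subseteq x$; assuming $\neg\Diamond c\in x$ and picking $g\in x$ with $g\prec\neg\Diamond c$, I must convert the infinitary equality $\Diamond c=\bigwedge\{\Diamond d:c\prec d\}$ into the finitary statement that $\cl\eta(g)$ is disjoint from a single $\cl\eta(\Diamond d_0)$ with $c\prec d_0$, which then gives $g\wedge\Diamond d_0=0$ with both factors in $x$, a contradiction. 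I expect this conversion to be the main obstacle: it amounts to the identity $\bigcap_{c\prec d}\cl\eta(\Diamond d)=\cl\eta(\Diamond c)$, which is exactly the point where upper continuity must be matched against the topological definition of $\Diamond^U$, after which compactness of $\cl\eta(g)$ and the finite intersection property close the argument.
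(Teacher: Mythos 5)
This theorem is not proved in the paper at all: it is imported verbatim from \cite[Thm.~5.11]{BeBeHa15}, so your attempt has to be judged on its own terms. Your architecture (de Vries duality for the underlying algebra, the J\'onsson--Tarski relation $x\mathrel{R}y$ iff $y\subseteq T_x$, an extension lemma, upper continuity entering only in one of the two inclusions) is indeed the standard one, but two key steps are wrong. First, the proof of your ``technical core'' rests on primeness of ends, and ends of a de Vries algebra are \emph{not} prime filters: in $\RO([0,1])$, for $U=(0,1/2)$ one has $\neg U=(1/2,1]$ and $U\vee\neg U=1$, so the end consisting of all regular opens containing the point $1/2$ contains $U\vee\neg U$ but neither disjunct. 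Hence your appeal to ``primeness of the end $x$'' is unjustified, and the plan of showing that the Zorn-maximal $y$ is an end \emph{by showing it is prime} cannot work either, since the end you must produce is typically not prime (already for $R$ the identity relation, where $T_x=x$, the required $y$ is the non-prime end $x$ itself). What replaces primeness is the order-theoretic characterization of ends (a proper round filter $y$ is an end iff $a\prec b$ implies $b\in y$ or $\neg a\in y$) and the resulting ``$\prec$-primeness'': if $a_1\vee\dots\vee a_n\in x$ and $a_j\prec b_j$ for each $j$, then some $b_j\in x$. You actually use exactly this device, correctly, in your sketch of continuity condition (ii) (shrink $c_j'\prec c_j$, then pass from $\Diamond c_j\notin x_0$ to $\neg\Diamond c_j'\in x_0$), but not in the extension lemma, where it is indispensable; note also that ``the round filter generated by $y\cup\{c_i\}$'' needs care, since adjoining an element destroys roundness and the generated round filter must be built with interpolants.

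Second, your reduction of $\Diamond^U\eta(c)\subseteq\eta(\Diamond c)$ to $R^{-1}[\cl\eta(c)]\subseteq\cl\eta(\Diamond c)$, and further to $\bigcap_{c\prec d}\cl\eta(\Diamond d)=\cl\eta(\Diamond c)$, is a reduction to \emph{false} statements, so the ``conversion'' you flag as the main obstacle cannot be carried out by any amount of compactness. Counterexample: let $X=[0,1]$, let $R$ be the graph of $f(x)=1/4+|x-1/2|$, and take the algebra $(\RO(X),\prec,\Diamond^U)$ of \cref{thm:modal KHaus to upper} with $c=[0,1/4)$. Every regular open $W\ni 1/4$ contains $(1/4-\delta,1/4+\delta)$ for some $\delta>0$, so $\Diamond^U W=\int f^{-1}[\cl W]\supseteq(1/2-\delta,1/2+\delta)\ni 1/2$; thus, with your definition of $R$, the end at $1/2$ is related to the end at $1/4$, which lies in $\cl\eta(c)$, and moreover $1/2\in\bigcap_{c\prec d}\cl\eta(\Diamond d)$. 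But $\Diamond^U c=\int f^{-1}\bigl[[0,1/4]\bigr]=\int\{1/2\}=\emptyset$, so $\cl\eta(\Diamond c)=\emptyset$ and both of your claims fail. The repair is easier than what you attempt: if $x\mathrel{R}y$ and $\neg c\notin y$, the end characterization gives $d\in y$, hence $\Diamond d\in x$, for every $d$ with $c\prec d$; therefore $R^{-1}[\cl\eta(c)]\subseteq\bigcap_{c\prec d}\eta(\Diamond d)$, and taking interiors, $\int R^{-1}[\cl\eta(c)]\subseteq\int\bigcap_{c\prec d}\eta(\Diamond d)\subseteq\bigwedge_{c\prec d}\eta(\Diamond d)=\eta\bigl(\bigwedge_{c\prec d}\Diamond d\bigr)=\eta(\Diamond c)$, the last two equalities holding because $\eta$ is an isomorphism of complete Boolean algebras and $\Diamond$ is upper continuous. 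With this route, and with the extension lemma redone via the end characterization and the shrinking trick, your outline does go through.
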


By \cite[Thm.~4.23]{BeBeHa15}, the category of modal de Vries algebras is equivalent to the category of upper continuous modal de Vries algebras, and hence dually equivalent to the category of modal compact Hausdorff spaces. However, isomorphisms in the category of modal de Vries algebras are not necessarily structure preserving bijections.

We end the section by recalling that (upper continuous) modal contact algebras can be equivalently defined via (lower continuous) de Vries multiplicative operators.

\begin{definition}\label{def:de vries mult and lower continuous}
Let $(B, \prec)$ be a modal contact algebra and $\Box\colon B \to B$.
\begin{enumerate}
\item We call $\Box$ \emph{de Vries multiplicative} if
\begin{enumerate}
\item $\Box 1=1$;
\item $a_1\prec b_1$ and $a_2\prec b_2$ imply $\Box a_1\wedge \Box a_2 \prec\Box (b_1\wedge b_2)$.
\end{enumerate}
\item We call $\Box$ \emph{lower continuous} if $\bigvee\{\Box b: b\prec a\}$ exists and is equal to $\Box a$ for each $a\in B$.
\end{enumerate}
\end{definition} 

As observed in \cite[Rem.~4.11]{BezhanishviliGabelaiaEtAl2019},
there is a bijective correspondence between (upper continuous) de Vries additive operators and (lower continuous) de Vries multiplicative operators:

\begin{proposition}\label{prop:box and diamond}
Let $(B, \prec)$ be a contact algebra and $\Diamond$ a unary operator on $B$. If $\Box \colon B \to B$ is given by $\Box b = \neg \Diamond \neg b$ for every $b \in B$, then
\begin{enumerate}
\item\label{prop:box and diamond:item1} $\Diamond$ is de Vries additive iff $\Box$ is de Vries multiplicative;
\item\label{prop:box and diamond:item2} $\Diamond$ is upper continuous iff $\Box$ is lower continuous.
\end{enumerate}
\end{proposition}

If $(X,R)$ is a modal compact Hausdorff space, then the lower continuous de Vries multiplicative operator on $\RO(X)$ corresponding to $\Diamond^U$ is given by
\begin{align*}
\Box^L O &= \int (\cl (\Box_R O));
\end{align*}
where $\Box_R O=X \setminus R^{-1}[X \setminus O]$ for $O \in \RO(X)$.

\section{A calculus for modal compact Hausdorff spaces}\label{Sec:Calculus:LMDV:UMDV}

In this section we introduce the modal system $\UMSSIC$ and show that it is strongly sound and complete with respect to upper continuous modal de Vries algebras. As a consequence, we will obtain that $\UMSSIC$ is also strongly sound and complete with respect to modal compact Hausdorff spaces. 

We first introduce a fragment of $\UMSSIC$ that we call \emph{modal symmetric strict implication calculus} and denote by $\MSSIC$. The language of $\MSSIC$ is obtained by extending the language of $\SSIC$ with a unary connective $\Box$. We abbreviate $\neg\Box\neg$ with $\Diamond$.

\begin{definition}
Let $\MSSIC$ be the propositional modal system obtained by extending $\SSIC$ with the axioms schemes:
\begin{itemize}
\item[(K)] $\Box(\phi \to \psi) \to (\Box \phi \to \Box \psi)$;
\item[(Add)] $(\phi\rightsquigarrow\psi)\to(\Box\phi\rightsquigarrow\Box\psi)$;
\end{itemize}
and the inference rule:
\begin{itemize}
\item[(N)] $\inference{\phi}{\vphantom{A}\Box\phi}$.
\end{itemize}
\end{definition}

Since $\MSSIC$ proves (K) and is closed under the inference rule (N), \cref{rem:rightsquigarrow and nabla} implies that $\MSSIC$ is also a propositional modal system according to the definition given in \cite[p.~3]{BeCaGhLa22}. For a propositional modal system $\mathcal{S}$,  we say that a unary modality $\univ$ of $\mathcal{S}$ is a \emph{universal modality} if the following formulas are theorems in $\mathcal{S}$, where $\star$ ranges over all modalities in the language of $\mathcal{S}$:
\begin{alignat*}{2}
\univ \phi \impl \phi, \hspace{2cm} &\univ \phi \impl \univ \univ \phi, \\
\phi \impl \univ \ne\univ\ne \phi, \hspace{2cm} &\univ(\phi\impl \psi)\impl (\univ \phi \impl \univ \psi),\\
\omit\rlap{$\bigwedge_i \univ (\phi_i \biimpl \psi_i) \impl (\star[\phi_1, \dots, \phi_n]\biimpl \star[\psi_1, \dots, \psi_n])$.}
\end{alignat*}
For more details on universal modalities see, e.g., \cite{GoPa92} and \cite[Sec.~7.1]{BRV01}. The modality $\univ$ defined as $\univ \phi \coloneq \top \rightsquigarrow \phi$ is a universal modality for $\SSIC$ (see \cite[p.~15]{BeCaGhLa22}). We show that this is still true for $\MSSIC$.

\begin{proposition}\label{prop:MSSIC univ modality}
The calculus $\MSSIC$ has a universal modality $\univ$ given by $\univ \phi = \top \rightsquigarrow \phi$. 
\end{proposition}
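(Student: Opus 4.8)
The plan is to verify each of the five defining properties of a universal modality for the operator $\univ \phi \coloneqq \top \rightsquigarrow \phi$ in $\MSSIC$, relying on the fact that this is already established for $\SSIC$. Since $\MSSIC$ extends $\SSIC$, every $\SSIC$-theorem remains a $\MSSIC$-theorem and every $\SSIC$-derivation is a $\MSSIC$-derivation. Consequently, the first four required formulas---$\univ \phi \impl \phi$, $\univ \phi \impl \univ \univ \phi$, $\phi \impl \univ \ne\univ\ne \phi$, and the normality axiom $\univ(\phi\impl \psi)\impl (\univ \phi \impl \univ \psi)$---are inherited directly, as these involve only the strict implication fragment and are precisely the statements already proved for $\SSIC$ in the cited \cite[p.~15]{BeCaGhLa22}. (The first comes from (A4); the second from (A8); the third from (A9) together with (A5)/duality; the normality from (A2)/(A3).)

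The genuine new content is the final congruence schema
\[
\bigwedge_i \univ (\phi_i \biimpl \psi_i) \impl (\star[\phi_1, \dots, \phi_n]\biimpl \star[\psi_1, \dots, \psi_n]),
\]
which must now be checked not only for the connectives already present in $\SSIC$ (where it holds by inheritance) but also for the new modality $\star = \Box$. So the crux is to derive
\[
\univ(\phi \biimpl \psi) \impl (\Box\phi \biimpl \Box\psi).
\]
First I would note that $\univ(\phi\biimpl\psi)$ is $\top \simpa (\phi\biimpl\psi)$, and I would unpack $\phi\biimpl\psi$ as $(\phi\impl\psi)\wedge(\psi\impl\phi)$. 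Using (A3) (which distributes $\simpa$ over $\wedge$ on the right), $\univ(\phi\biimpl\psi)$ is provably equivalent to $\univ(\phi\impl\psi)\wedge\univ(\psi\impl\phi)$. It therefore suffices to show that $\univ(\phi\impl\psi)$ implies $\Box\phi\impl\Box\psi$, since the symmetric half gives $\Box\psi\impl\Box\phi$ and together they yield the biconditional.

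To obtain $\univ(\phi\impl\psi)\impl(\Box\phi\impl\Box\psi)$, the natural route is through the normality axiom (K) for $\Box$. The hard part, and the only step requiring real care, is bridging from the strict implication $\univ(\phi\impl\psi)=\top\simpa(\phi\impl\psi)$ to the ordinary box $\Box(\phi\impl\psi)$ needed to fire (K). For this I would use the (Add) axiom $(\chi\simpa\eta)\impl(\Box\chi\simpa\Box\eta)$: instantiating with $\chi=\top$ and $\eta=\phi\impl\psi$ gives $\univ(\phi\impl\psi)\impl(\Box\top\simpa\Box(\phi\impl\psi))$. Since $\Box\top$ is provably $\top$ (by (N) applied to the propositional theorem $\top$, i.e. $\vdash\Box\top$, which is classically equivalent to $\Box\top\liff\top$), the consequent simplifies to $\top\simpa\Box(\phi\impl\psi)$, that is $\univ\Box(\phi\impl\psi)$. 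Applying $\univ\phi\impl\phi$ (the already-established reflexivity), this yields $\Box(\phi\impl\psi)$, and then (K) delivers $\Box\phi\impl\Box\psi$. Chaining these implications gives $\univ(\phi\impl\psi)\impl(\Box\phi\impl\Box\psi)$, and by symmetry and conjunction the desired congruence for $\Box$ follows, completing the proof that $\univ$ is a universal modality for $\MSSIC$.
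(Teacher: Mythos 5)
Your proposal is correct and follows essentially the same route as the paper: both reduce the problem to the congruence schema for $\Box$, derive $\univ\chi \impl \Box\chi$ via (Add) instantiated at $\top$, the simplification $\Box\top \liff \top$, and the reflexivity $\univ\chi\impl\chi$ coming from (A4), and then conclude with (K) and the distribution of $\univ$ over conjunctions. The only cosmetic difference is that the paper isolates $\univ\chi\impl\Box\chi$ as a general lemma before instantiating it at $\chi = \phi\impl\psi$ and $\chi = \psi\impl\phi$, whereas you carry out the same derivation directly on $\phi\impl\psi$.
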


\begin{proof}
It is sufficient to show that $\univ (\phi \biimpl \psi) \impl (\Box \phi \biimpl \Box \psi)$ is a theorem of $\MSSIC$. Indeed, since $\MSSIC$ extends $\SSIC$ and $\univ$ is a universal modality for $\SSIC$, the remaining formulas in the definition of universal modality are theorems of $\MSSIC$.
We first prove that $\vdash_{\MSSIC} \univ \chi \to \Box \chi$ for any formula $\chi$. Axiom (Add) yields that $\vdash_{\MSSIC}\univ \chi \to (\Box \top \rightsquigarrow \Box \chi)$. Since $\vdash_{\MSSIC} \Box \top \biimpl \top$, we have that $\vdash_{\MSSIC}\univ \chi \to \univ \Box \chi$. Thus, $\vdash_{\MSSIC} \univ \chi \to \Box \chi$ because $\vdash_{\MSSIC} \univ \Box \chi \to \Box \chi$. We now show that $\vdash_{\MSSIC}\univ (\phi \biimpl \psi) \impl (\Box \phi \biimpl \Box \psi)$. By Axiom (K) and the theorem $\univ \chi \to \Box \chi$, it follows that $\univ (\phi \to \psi) \to (\Box \phi \to \Box \psi)$ and $\univ (\psi \to \phi) \to (\Box \psi \to \Box \phi)$ are theorems of $\MSSIC$. Since $\vdash_{\MSSIC} \univ (\chi_1 \wedge \chi_2) \biimpl (\univ \chi_1 \wedge \univ \chi_2)$ for any pair of formulas $\chi_1, \chi_2$, we obtain that $\vdash_{\MSSIC}\univ (\phi \biimpl \psi) \impl (\Box \phi \biimpl \Box \psi)$.
\end{proof}

Let $\mathcal{S}$ be a propositional modal system with a universal modality $\univ$. We call \emph{$\mathcal{S}$-algebras} the Boolean algebras equipped with operators corresponding to the modalities of $\mathcal{S}$ that validate all the theorems of $\mathcal{S}$. 
A standard Lindenbaum construction yields that $\mathcal{S}$ is sound and complete with respect to the class of $\mathcal{S}$-algebras. 
From~\cite[Thm.~3]{jips:disc93} it follows that $\neg [\forall] \neg x$ is a unary discriminator term in all subdirectly irreducible $\mathcal{S}$-algebras, and so 
$\mathcal{S}$-algebras form a discriminator variety. Then $\mathcal{S}$ is also sound and complete with respect to the class of simple $\mathcal{S}$-algebras, where an $\mathcal{S}$-algebra $B$ is simple iff $\univ a = 1$ or $\univ a = 0$ for each $a \in B$ (see \cite[pp.~240--241]{jips:disc93}).

\begin{remark}\label{rem:correspondence modal contact simple MSSIC-algebras}
An $\SSIC$-algebra $(B, \rightsquigarrow)$ is simple iff $a \rightsquigarrow b$ is $0$ or $1$ for every $a,b \in B$. By \cite[Prop.~3.3]{BeBeSaVe19}, there is a bijection between simple $\SSIC$-algebras and contact algebras.
A simple $\SSIC$-algebra $(B, \rightsquigarrow)$ is associated with the contact algebra ${(B, \prec)}$, where $a \prec b$ iff $a \rightsquigarrow b = 1$. Vice versa, a contact algebra $(B, \prec)$ is associated with the simple $\SSIC$-algebra $(B, \rightsquigarrow)$, where $a \rightsquigarrow b = 1$ if $a \prec b$ and $a \rightsquigarrow b = 0$ otherwise.
Similarly, an $\MSSIC$-algebra $(B, \rightsquigarrow, \Diamond)$ is simple iff $a \rightsquigarrow b$ is $0$ or $1$ for every $a,b \in B$. 
The correspondence above extends to a bijection between simple $\MSSIC$-algebras and contact algebras equipped with a finitely additive and proximity preserving operator, which are exactly the finitely additive modal contact algebras by \cref{prop:fin add iff prox pres}.
\end{remark}

The next theorem states that $\MSSIC$ is sound and complete with respect to the class $\AMCon$\label{AMCon} of finitely additive modal contact algebras. A valuation $v$ on a modal contact algebra extends to all formulas in the language of $\MSSIC$ by setting $v(\Diamond \phi)=\Diamond v(\phi)$. Validity and semantic entailment for modal contact algebras are defined similarly to the case of contact algebras (see \cref{subsec:SSIC}).

\begin{theorem}\label{prop:simple MSSIC}
For a formula $\phi$, we have
\[
\vdash_{\MSSIC} \phi \iff \ \vDash_{\AMCon}\phi.
\]
\end{theorem}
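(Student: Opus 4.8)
The plan is to reduce the claim to the abstract completeness theorem for propositional modal systems with a universal modality, and then transport the result along the algebraic correspondence recorded in \cref{rem:correspondence modal contact simple MSSIC-algebras}. Since \cref{prop:MSSIC univ modality} establishes that $\univ \phi = \top \rightsquigarrow \phi$ is a universal modality for $\MSSIC$, the result of \cite{KQ76} applies directly and yields that $\MSSIC$ is sound and complete with respect to the class of \emph{simple} $\MSSIC$-algebras, namely those $\MSSIC$-algebras in which $\univ a \in \{0,1\}$ for every element $a$. It therefore suffices to show that validity over simple $\MSSIC$-algebras coincides with validity over $\mathsf{MCon}_a$.

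For this I would invoke the bijection of \cref{rem:correspondence modal contact simple MSSIC-algebras}: a simple $\MSSIC$-algebra $(B, \rightsquigarrow, \Diamond)$ corresponds to the finitely additive modal contact algebra $(B, \prec, \Diamond)$ with $a \prec b$ iff $a \rightsquigarrow b = 1$, and conversely. Here the requirement that $\Diamond$ be finitely additive and proximity preserving—equivalently de Vries additive, by \cref{prop:fin add iff prox pres}—is exactly what the axioms (K), (Add) together with (N) impose, while the two-valuedness of $\rightsquigarrow$ is forced by simplicity. The key observation is that this correspondence leaves the underlying Boolean algebra and the operator $\Diamond$ fixed, so that a valuation $v$ is literally the same object on both structures. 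A routine induction on the complexity of $\phi$ then shows that $v(\phi)$ denotes the same element of $B$ under either reading: the only non-Boolean cases are $v(\phi \rightsquigarrow \psi)$, which equals $1$ precisely when $v(\phi) \prec v(\psi)$ in both structures, and $v(\Diamond \phi) = \Diamond v(\phi)$, where the operator is unchanged. Hence $\phi$ is valid in a simple $\MSSIC$-algebra iff it is valid in the corresponding finitely additive modal contact algebra, and the two notions of validity agree.

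Soundness can either be read off the same correspondence—every member of $\mathsf{MCon}_a$ arises from a simple $\MSSIC$-algebra, which validates all theorems of $\MSSIC$ by definition—or be verified directly by checking that finite additivity of $\Diamond$ yields (K) and (N) and that proximity preservation yields (Add), on top of the $\SSIC$-soundness provided by \cref{thm:alg completeness of SSIC}. The main obstacle is not any single hard computation but rather making the correspondence of \cref{rem:correspondence modal contact simple MSSIC-algebras} precise enough to guarantee that it preserves and reflects the validity of \emph{arbitrary} formulas; once the inductive matching of interpretations above is in place, combining it with the \cite{KQ76} completeness theorem closes both directions of the biconditional simultaneously.
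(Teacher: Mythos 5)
Your proposal is correct and follows essentially the same route as the paper's proof: invoke \cref{prop:MSSIC univ modality} to get a universal modality, apply the completeness result of \cite{KQ76} for simple $\MSSIC$-algebras, and then transfer validity along the bijection of \cref{rem:correspondence modal contact simple MSSIC-algebras}. The only difference is that you spell out the routine induction showing the correspondence preserves and reflects validity of arbitrary formulas, which the paper leaves implicit.
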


\begin{proof}
By \cref{prop:MSSIC univ modality}, $\MSSIC$ has a universal modality. Therefore, it is complete with respect to the class of simple $\MSSIC$-algebras. The statement then follows from the correspondence between simple $\MSSIC$-algebras and finitely additive modal contact algebras.
\end{proof}

To define $\UMSSIC$ we first need to recall the definition of $\Pi_2$-rules. 

\begin{definition}
A $\Pi_2$-rule is an inference rule of the following shape
\[
\inference{F(\overline{\phi},\overline{p})\to\chi}{G(\overline{\phi})\to\chi}, \tag{$\rho$}
\]
where $F,G$ are formulas, $\overline{\phi}$ is a tuple of formulas, $\chi$ is a formula, and $\overline{p}$ is a tuple of propositional letters which do not occur in $\overline{\phi}$ and $\chi$.

Let $\psi$ and $\psi'$ be formulas. We say that $\psi$ is obtained from $\psi'$ by rule $(\rho)$ if there are some formulas $\overline{\phi}$ and $\chi$ such that $\psi'=F(\overline{\phi},\overline{p})\to\chi$, $\psi=G(\overline{\phi})\to\chi$, with the propositional variables $\overline{p}$ not occurring in $\overline{\phi}$ and $\chi$.
\end{definition}

Note that $\Pi_2$-rules behave differently from standard inference rules because of the requirement of the existence of the propositional variables $\overline{p}$ that must not occur in $\overline{\phi}$ and $\chi$. We refer to the introduction for a short overview of the history of $\Pi_2$-rules in modal logic, which goes back to Gabbay’s irreflexivity rule \cite{Ga81}.

We are now ready to introduce the calculus that we will show is strongly complete with respect to upper continuous modal de Vries algebras.

\begin{definition}\label{def:MSSICU}
The calculus $\UMSSIC$ is obtained by adding the following $\Pi_2$-rules to $\MSSIC$:
\begin{itemize}
\item[($\rho$7)] $\inference{(\phi\rightsquigarrow p)\land(p\rightsquigarrow\psi)\to\chi}{(\phi\rightsquigarrow\psi)\to\chi}$;
\item[($\rho$8)] $\inference{p \land (p\rightsquigarrow\phi)\to\chi}{\phi\to\chi}$;
\item[(UC)] $\inference{\Box p \land (p\rightsquigarrow\phi)\to\chi}{\Box\phi\to\chi}$.
\end{itemize}
\end{definition}

Let $\Gamma$ be a set of formulas and $\phi$ a formula. We write $\Gamma \vdash_\UMSSIC \phi$ if there are $\gamma_1, \dots, \gamma_n \in \Gamma$ such that
\[
\vdash_\UMSSIC \univ (\gamma_1 \wedge \dots \wedge \gamma_n) \to \phi.
\] 

\begin{remark}
As we will see in \cref{prop:Pi2-correspondence}, the $\Pi_2$-rules ($\rho$7) and ($\rho$8) in \cref{def:MSSICU} correspond to Axioms (S7) and (S8) of contact algebras (see \cref{def:contact compingent de Vries}) and the $\Pi_2$-rule (UC) corresponds to upper continuity (see \cref{def:upper continuous}). In order to make this correspondence explicit, we follow the enumeration of \cite{Sa16} instead of the one used in \cite{BeBeSaVe19}, where ($\rho$7) and ($\rho$8) are called ($\rho$6) and ($\rho$7), respectively.
\end{remark}

As their name suggests, $\Pi_2$-rules correspond to certain $\forall \exists$ sentences. We will exploit this correspondence to prove strong completeness of $\UMSSIC$ with respect to upper continuous modal de Vries algebras.

\begin{definition}\label{def:Pi(rho)}
If $\rho$ is the $\Pi_2$-rule
\[
\inference{F(\overline{\phi},\overline{p})\to\chi}{G(\overline{\phi})\to\chi},
\] 
then we denote by $\Pi(\rho)$ the first-order sentence in the language of $\MSSIC$-algebras
\[
\forall \overline{x}, z \left(G(\overline{x}) \nleq z \Rightarrow \exists \overline{y} : F(\overline{x},\overline{y}) \nleq z\right),
\]
where the symbol $\Rightarrow$ stands for the first-order implication.
\end{definition}

The first-order sentences associated to the $\Pi_2$-rules of $\UMSSIC$ are the following: 
\begin{enumerate}[leftmargin=1.25cm]
\item[$\Pi(\rho 7)$:] 
For all $x_1,x_2,z$, if $x_1 \rightsquigarrow x_2 \nleq z$, then there exists $y$ such that\\ $(x_1 \rightsquigarrow y) \wedge (y \rightsquigarrow x_2) \nleq z$.
\item[$\Pi(\rho 8)$:]
For all $x,z$, if $x \nleq z$, then there exists $y$ such that $y \wedge (y \rightsquigarrow x) \nleq z$.
\item[$\Pi(\text{UC})$:]
For all $x,z$, if $\Box x \nleq z$, then there exists $y$ such that $\Box y \wedge (y \rightsquigarrow x) \nleq z$.
\end{enumerate}

\begin{proposition}\label{prop:Pi2-correspondence}
Let $(B, \prec, \Diamond)$ be a finitely additive modal contact algebra and $(B, \rightsquigarrow, \Diamond)$ the corresponding simple $\MSSIC$-algebra \emph{(}see \cref{rem:correspondence modal contact simple MSSIC-algebras}\emph{)}.
\begin{enumerate}
\item\label{prop:Pi2-correspondence:item1} $(B, \rightsquigarrow, \Diamond)$ satisfies $\Pi(\rho 7)$ and $\Pi(\rho 8)$ iff $(B, \prec, \Diamond)$ is a modal compingent algebra.
\item\label{prop:Pi2-correspondence:item2} $(B, \rightsquigarrow, \Diamond)$ satisfies $\Pi(\emph{UC})$ iff $(B, \prec, \Diamond)$ is upper continuous.
\end{enumerate}
\end{proposition}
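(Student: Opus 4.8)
The plan is to translate each first-order correspondent $\Pi(\rho)$ into a condition on the contact relation $\prec$ and the modal operator, exploiting that in the simple $\MSSIC$-algebra $(B,\rightsquigarrow,\Diamond)$ the value $a\rightsquigarrow b$ lies in $\{0,1\}$, with $a\rightsquigarrow b=1$ iff $a\prec b$. The recurring device is that for any term $t\in\{0,1\}$ one has $t\nleq z$ iff $t=1$ and $z\neq 1$; this collapses the outer quantifier over $z$ and turns each $\Pi_2$-correspondent into a purely relational $\forall\exists$-statement (for the forward directions it is convenient to fix $z=0$, and then check the general $z$ for the converses). Throughout I use the standing hypothesis that $(B,\prec,\Diamond)$ is a finitely additive modal contact algebra, so (S1)--(S6) hold and $\Diamond$ is finitely additive.

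For part \ref{prop:Pi2-correspondence:item1}, I would first unfold $\Pi(\rho 6)$: the hypothesis $(x_1\rightsquigarrow x_2)\nleq z$ forces $x_1\prec x_2$ (and $z\neq 1$), while the conclusion $\exists y:(x_1\rightsquigarrow y)\wedge(y\rightsquigarrow x_2)\nleq z$ becomes $\exists y: x_1\prec y\prec x_2$. Hence $\Pi(\rho 6)$ is exactly (S7). Unfolding $\Pi(\rho 7)$ analogously yields: for all $x,z$, if $x\nleq z$ then there is $y\prec x$ with $y\nleq z$. Taking $z=0$ gives (S8) at once; conversely, given (S8) and $x\nleq z$, I would apply (S8) to the nonzero element $x\wedge\neg z$ to obtain $b\neq 0$ with $b\prec x\wedge\neg z$, and then (S4) gives $b\prec x$ while (S5) gives $b\le\neg z$, whence $b\nleq z$, so $y=b$ is the required witness. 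Since (S1)--(S6) already hold, adding (S7) and (S8) is precisely what makes $(B,\prec)$ compingent, i.e.\ $(B,\prec,\Diamond)$ a modal compingent algebra.

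For part \ref{prop:Pi2-correspondence:item2}, the same simplification turns $\Pi(\mathrm{UC})$ into: for all $x,z$, if $\Box x\nleq z$ then there is $y\prec x$ with $\Box y\nleq z$, where $\Box=\neg\Diamond\neg$. I would then recognize this as the assertion that $\Box x$ is the \emph{least} upper bound of the set $\{\Box y: y\prec x\}$. That $\Box x$ is an upper bound uses that $\Box$ is order-preserving (which holds because $\Diamond$ is finitely additive) together with (S5); that it is the least one is the contrapositive content of $\Pi(\mathrm{UC})$, since a strictly smaller upper bound $z$ would, by the rule, produce some $y\prec x$ with $\Box y\nleq z$, contradicting that $z$ bounds the set. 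Thus $\Pi(\mathrm{UC})$ holds iff $\Box x=\bigvee\{\Box y: y\prec x\}$ for every $x$, i.e.\ iff $\Box$ is lower continuous, and by \cref{rem:upper and lower diamond and box} this is equivalent to $\Diamond$ being upper continuous.

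The bulk of the argument is routine unfolding; the two points that demand care are the (S8)$\Rightarrow\Pi(\rho 7)$ direction (the choice of witness $x\wedge\neg z$ and the verification $y\nleq z$ via (S4)--(S5)) and, in part \ref{prop:Pi2-correspondence:item2}, packaging $\Pi(\mathrm{UC})$ as a least-upper-bound condition so that the \emph{existence} of the join is delivered together with the equality. I also want to make sure the passage between lower continuity of $\Box$ and upper continuity of $\Diamond$ is justified by \cref{rem:upper and lower diamond and box}, which ultimately rests on order-preservation of $\Box$, i.e.\ on the finite additivity of $\Diamond$ guaranteed by the standing hypothesis.
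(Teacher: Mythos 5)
Your proposal is correct and follows essentially the same route as the paper: for part \ref{prop:Pi2-correspondence:item2} you unfold $\Pi(\mathrm{UC})$ into the condition that $\Box x$ is the least upper bound of $\{\Box y : y \prec x\}$ (using order-preservation of $\Box$ from finite additivity) and then pass to upper continuity of $\Diamond$ via \cref{rem:upper and lower diamond and box}, which is exactly the paper's argument. For part \ref{prop:Pi2-correspondence:item1} the paper simply cites \cite[Lem.~6.1]{BeBeSaVe19}, and your direct unfolding of $\Pi(\rho 6)$ and $\Pi(\rho 7)$ into (S7) and (S8) (including the witness $x \wedge \neg z$ in the (S8)$\Rightarrow\Pi(\rho 7)$ direction) is precisely the content of that cited lemma, so no genuine divergence.
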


\begin{proof}
(1) follows from \cite[Lem~6.1]{BeBeSaVe19}.

(2). 
Recall from \cref{prop:box and diamond} that the operator $\Box$ on $B$ is defined as $\neg \Diamond \neg$. The sentence $\Pi(\text{UC})$ holds in $(B, \rightsquigarrow, \Diamond)$ iff for every $x,z \in B$ we have that $\Box x \leq z$ whenever $\Box y \wedge (y \rightsquigarrow x) \leq z$ for each $y \in B$. Observe that $\Box y \wedge (y \rightsquigarrow x) \leq z$ holds iff $y \prec x$ implies $\Box y \le z$.
So, $\Pi(\text{UC})$ is equivalent to the requirement that $\Box x \le z$ whenever $z$ is an upper bound of $\{ \Box y : y \prec x \}$.
Since $\Diamond$ is order preserving, $\Box$ is also order preserving, and hence $\Box x$ is an upper bound of $\{ \Box y : y \prec x \}$. Thus, $(B, \prec, \Diamond)$ satisfies $\Pi(\text{UC})$ iff $\Box x = \bigvee \{ \Box y : y \prec x \}$ for every $x \in B$, which means that $\Box$ is lower continuous. By \cref{prop:box and diamond}\eqref{prop:box and diamond:item2}, $\Box$ is lower continuous iff $\Diamond$ is upper continuous. Therefore, $(B, \rightsquigarrow, \Diamond)$ satisfies $\Pi(\text{UC})$ iff $(B, \prec, \Diamond)$ is upper continuous.
\end{proof}

We now recall one of our main tools to prove  completeness of $\UMSSIC$. If $\mathcal{S}$ is a propositional modal system, then there is a bijective correspondence between propositional formulas in the language of $\mathcal{S}$ and terms in the first-order language of $\mathcal{S}$-algebras. In particular, we can think of a propositional formula $\varphi$ as a term and consider the first-order formula $\varphi=1$. 

\begin{theorem}\textup{\cite[Thm.~5.1]{BeCaGhLa22}}\label{thm:completeness pi2 rules}
Let $\mathcal{S}$ be a propositional modal system with a universal modality and $\Theta$ a set of $\Pi_2$-rules. Denote by $\mathcal{S}+\Theta$ the modal system obtained by adding the $\Pi_2$-rules in $\Theta$ to $\mathcal{S}$ and by $T_\mathcal{S}$ the first-order theory of simple $\mathcal{S}$-algebras such that $0 \neq 1$. Then for every formula $\phi$
\[
T_\mathcal{S} \cup \{ \Pi(\rho) : \rho \in \Theta\} \vDash \phi = 1 \iff \ \vdash_{\mathcal{S}+\Theta} \phi.
\]
\end{theorem}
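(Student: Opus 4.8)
The plan is to prove the two implications of the biconditional separately: the direction $\vdash_{\mathcal{S}+\Theta}\phi \Rightarrow T_\mathcal{S}\cup\{\Pi(\rho):\rho\in\Theta\}\vDash\phi=1$ is \emph{soundness}, while the converse is \emph{completeness}. For soundness I would induct on the length of a derivation in $\mathcal{S}+\Theta$. Every $\mathcal{S}$-algebra validates the axioms and ordinary rules of $\mathcal{S}$, and the simple $\mathcal{S}$-algebras with $0\neq 1$ are among them, so the only new content is that each $\Pi_2$-rule $\rho$ preserves validity in any algebra $B$ satisfying $\Pi(\rho)$. Fixing such a $B$ and assuming the premise $F(\overline{\phi},\overline{p})\to\chi$ is valid in $B$, I would use that the witness variables $\overline{p}$ occur neither in $\overline{\phi}$ nor in $\chi$: for every valuation $v$ one may vary the values of $\overline{p}$ while keeping $x\coloneqq v(\overline{\phi})$ and $z\coloneqq v(\chi)$ fixed, so validity of the premise gives $F(x,y)\le z$ for all $y$. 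The contrapositive of $\Pi(\rho)$, namely $\forall x,z\,((\forall y\,F(x,y)\le z)\Rightarrow G(x)\le z)$, then yields $G(x)\le z$, i.e.\ the conclusion $G(\overline{\phi})\to\chi$ is valid in $B$. This is exactly the semantic reading of $\Pi(\rho)$ already exploited in the proof of \cref{prop:Pi2-correspondence}.

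For completeness I would argue contrapositively: assuming $\not\vdash_{\mathcal{S}+\Theta}\phi$, I would build a simple $\mathcal{S}$-algebra $A$ with $0\neq 1$ satisfying every $\Pi(\rho)$ in which $\phi\neq 1$. Exploiting that $\univ$ is a universal modality, the tool is a Henkin-style construction carried out in the language enlarged by a countable stock of fresh propositional variables. Starting from the $\univ$-consistent set $\{\neg\univ\phi\}$ (consistent because $\not\vdash\phi$ gives $\not\vdash\univ\phi$, via $\univ\phi\to\phi$), I would enumerate all witness demands---pairs consisting of a rule $\rho\in\Theta$ and a substitution instance $G(\overline{\psi})\to\chi$ in the enlarged language---and build an increasing chain of $\univ$-consistent theories discharging each demand. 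The enabling mechanism is precisely the freshness side-condition: whenever the current theory does not yield $G(\overline{\psi})\to\chi$, it cannot yield $F(\overline{\psi},\overline{q})\to\chi$ for fresh $\overline{q}$ either, since otherwise an application of $\rho$---legitimate because $\overline{q}$ is fresh for the theory, for $\overline{\psi}$, and for $\chi$---would yield $G(\overline{\psi})\to\chi$ after all, so a witness refuting $F(\overline{\psi},\overline{q})\to\chi$ may be adjoined consistently. Completing this chain to a maximal $\univ$-consistent saturated theory $T$ and quotienting the formula algebra by the congruence $\psi\equiv\theta\iff\univ(\psi\liff\theta)\in T$ would produce the desired $A$: it is simple because $T$ decides every $\univ\psi$, it has $0\neq 1$ by consistency, the canonical valuation refutes $\phi$ because $\neg\univ\phi\in T$, and saturation translates directly into the validity of each $\Pi(\rho)$ in $A$.

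The main obstacle is the completeness direction, and specifically the bookkeeping that makes the saturated theory $T$ simultaneously consistent, saturated for \emph{all} instances of \emph{all} $\Pi(\rho)$, and $\univ$-complete. The freshness of the witness variables is what reconciles saturation with consistency, but it must be handled carefully: distinct demands must draw on disjoint fresh variables, and since discharging one demand introduces new witnesses that may themselves occur in later instances $G(\overline{\psi})$, the enumeration has to be iterated (or dovetailed) so that the countably many fresh variables suffice to meet every demand in the limit. Verifying that the limit theory genuinely yields $\Pi(\rho)$ in the quotient $A$---and not merely in a syntactic approximation---is the delicate step, and it is here that the interplay between the universal modality (securing simplicity, and hence completeness with respect to simple $\mathcal{S}$-algebras) and the $\Pi_2$-rules (securing the first-order conditions $\Pi(\rho)$) must be checked with care.
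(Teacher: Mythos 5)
The paper itself contains no proof of this statement---it is imported directly from \cite[Thm.~5.1]{BeCaGhLa22}---so the only meaningful comparison is with the argument given in that reference, which your proposal reproduces in its essentials: soundness by induction on derivations, using the freshness of $\overline{p}$ to quantify over all values of the witness variables and then invoking the contrapositive of $\Pi(\rho)$, and completeness via a Henkin-style chain of consistent theories in a language enlarged by countably many fresh variables, where each rule instance whose conclusion is refuted gets a fresh-witness refutation of its premise (consistency of this step resting on folding the current theory, via $\univ$ of its conjunction, into the consequent $\chi$ of the rule instance), followed by quotienting through the congruence $\psi\equiv\theta \iff \univ(\psi\liff\theta)\in T$ to obtain a simple algebra with $0\neq 1$ satisfying every $\Pi(\rho)$ and refuting $\phi$. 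Your plan is correct, including the points you flag as delicate (dovetailing the demands and verifying $\Pi(\rho)$ in the quotient rather than syntactically), so it matches the cited proof rather than offering a genuinely different route.
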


 The next theorem states that $\UMSSIC$ is sound and complete with respect to the class $\UMComp$\label{UMComp} of upper continuous modal compingent algebras. Note that $\UMComp \subseteq \AMCon$ by \cref{prop:upper cont implies finitely additive}.

\begin{theorem}\label{thm:sound and complete}
For a formula $\phi$, we have
\[
\vdash_{\UMSSIC} \phi \iff \ \vDash_{\UMComp}\phi.
\]
\end{theorem}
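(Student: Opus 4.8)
The plan is to obtain this as a direct specialization of the general completeness theorem for $\Pi_2$-rules (\cref{thm:completeness pi2 rules}) to $\mathcal{S} = \MSSIC$ and $\Theta = \{\rho 6, \rho 7, \text{UC}\}$, combined with the algebraic correspondences already established. First I would invoke \cref{prop:MSSIC univ modality} to note that $\MSSIC$ carries a universal modality $\univ$, so that \cref{thm:completeness pi2 rules} is applicable. Since $\MSSICU = \MSSIC + \Theta$ by \cref{def:MSSICU}, that theorem yields, for every formula $\phi$,
\[
T_{\MSSIC} \cup \{\Pi(\rho 6), \Pi(\rho 7), \Pi(\text{UC})\} \vDash \phi = 1 \iff\ \vdash_{\MSSICU} \phi,
\]
where $T_{\MSSIC}$ is the first-order theory of simple $\MSSIC$-algebras with $0 \neq 1$ and $\phi$ is read as a term on the left.

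The heart of the argument is then to identify the class axiomatized by $T_{\MSSIC} \cup \{\Pi(\rho 6), \Pi(\rho 7), \Pi(\text{UC})\}$ with the nondegenerate members of $\mathsf{UMComp}$. By \cref{rem:correspondence modal contact simple MSSIC-algebras}, the models of $T_{\MSSIC}$ are, via the bijection $a \prec b \iff a \rightsquigarrow b = 1$, exactly the nondegenerate finitely additive modal contact algebras; moreover this bijection matches the interpretation of formulas, so that $\phi = 1$ holds under every assignment in the simple $\MSSIC$-algebra iff $\phi$ is valid in the corresponding modal contact algebra $(B, \prec, \Diamond)$. Adjoining $\Pi(\rho 6)$ and $\Pi(\rho 7)$ restricts, by part (1) of \cref{prop:Pi2-correspondence}, to those whose contact reduct is compingent, while adjoining $\Pi(\text{UC})$ restricts, by part (2), to those with upper continuous $\Diamond$. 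Together these single out precisely the nondegenerate upper continuous modal compingent algebras.

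It remains to dispose of the side condition $0 \neq 1$ carried by $T_{\MSSIC}$. The degenerate algebra, in which $0 = 1$, validates every formula (every valuation sends each formula to $0 = 1$), so deleting it from $\mathsf{UMComp}$ leaves the set of valid formulas unchanged. Hence $\vDash_{\mathsf{UMComp}} \phi$ coincides with validity over the nondegenerate upper continuous modal compingent algebras, which by the previous paragraph is exactly $T_{\MSSIC} \cup \{\Pi(\rho 6), \Pi(\rho 7), \Pi(\text{UC})\} \vDash \phi = 1$. Chaining this with the displayed equivalence gives $\vdash_{\MSSICU} \phi \iff\ \vDash_{\mathsf{UMComp}} \phi$, as required.

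The step requiring the most care is this last bookkeeping: reconciling the first-order satisfaction statement ``$\phi = 1$'' over models of $T_{\MSSIC}$ with algebraic validity ``$\vDash \phi$'' over $\mathsf{UMComp}$, and in particular accounting for the degenerate algebra excluded by the $0 \neq 1$ clause. Everything else is a faithful transcription of \cref{thm:completeness pi2 rules}, \cref{prop:Pi2-correspondence}, and \cref{rem:correspondence modal contact simple MSSIC-algebras}, so I do not expect a genuine mathematical obstacle beyond ensuring these correspondences line up exactly.
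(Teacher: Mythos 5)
Your proposal is correct and follows essentially the same route as the paper: apply \cref{thm:completeness pi2 rules} (justified by the universal modality from \cref{prop:MSSIC univ modality}) and then identify the simple $\MSSIC$-algebras satisfying $\Pi(\rho 6)$, $\Pi(\rho 7)$, $\Pi(\text{UC})$ with upper continuous modal compingent algebras via \cref{rem:correspondence modal contact simple MSSIC-algebras,prop:Pi2-correspondence}. The only difference is that you spell out the bookkeeping about the degenerate algebra and the reading of $\phi=1$ as validity, which the paper leaves implicit.
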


\begin{proof}
Since $\MSSIC$ has a universal modality, \cref{thm:completeness pi2 rules} yields that $\UMSSIC$ is sound and complete with respect to the class of simple $\MSSIC$-algebras satisfying $\Pi(\rho 7)$, $\Pi(\rho 8)$, and $\Pi(\text{UC})$. \cref{prop:simple MSSIC,prop:Pi2-correspondence} imply that these algebras correspond to the upper continuous modal compingent algebras under the bijection described in \cref{rem:correspondence modal contact simple MSSIC-algebras}.
\end{proof}

The following theorem states that $\UMSSIC$ is also strongly sound and complete with respect to the class $\UMComp$ of upper continuous modal compingent algebras. 

\begin{theorem}\label{thm:strong sound and complete UMC}
For a set of formulas $\Gamma$ and a formula $\phi$, we have
\[
\Gamma\vdash_\UMSSIC\phi \iff \Gamma\vDash_{\UMComp}\phi.
\]
\end{theorem}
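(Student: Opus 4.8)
The goal is to upgrade the weak completeness of \cref{thm:sound and complete} to strong completeness, i.e.\ to handle deductions from an arbitrary set $\Gamma$ of hypotheses rather than from the empty set. The plan is to reduce the strong statement to the weak one by exploiting the universal modality $\univ$ and the specific definition of $\Gamma \vdash_\MSSICU \phi$ given just before \cref{def:MSSICU}.

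\medskip
\noindent\textbf{Soundness direction.} First I would prove that $\Gamma \vdash_\MSSICU \phi$ implies $\Gamma \vDash_{\mathsf{UMComp}} \phi$. By definition of $\Gamma \vdash_\MSSICU \phi$, there are $\gamma_1, \dots, \gamma_n \in \Gamma$ with $\vdash_\MSSICU \univ(\gamma_1 \wedge \dots \wedge \gamma_n) \to \phi$. By the weak soundness half of \cref{thm:sound and complete}, this theorem is valid in every $\mathbf{B} \in \mathsf{UMComp}$. Now fix such a $\mathbf{B}$ and a valuation $v$ with $v(\gamma_i) = 1$ for all $i$. Since $\mathbf{B}$ corresponds to a simple $\MSSIC$-algebra, $\univ$ is interpreted as the universal modality, so $v(\univ(\gamma_1 \wedge \dots \wedge \gamma_n)) = 1$ whenever $v(\gamma_1 \wedge \dots \wedge \gamma_n) = 1$; combined with validity of the implication and (MP) interpreted algebraically, this forces $v(\phi) = 1$. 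Hence $\Gamma \vDash_{\mathsf{UMComp}} \phi$.

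\medskip
\noindent\textbf{Completeness direction.} For the converse, suppose $\Gamma \nvdash_\MSSICU \phi$; I would produce a $\mathbf{B} \in \mathsf{UMComp}$ and a valuation refuting the entailment. The natural route is a Lindenbaum-style construction adapted to the $\Pi_2$-setting. I would consider the deductive closure and extend $\Gamma \cup \{\neg \phi\}$ (in the appropriate sense mediated by $\univ$) to a suitable maximal consistent-like object, then read off a simple $\MSSIC$-algebra satisfying $\Pi(\rho 6)$, $\Pi(\rho 7)$, $\Pi(\text{UC})$ in which $\Gamma$ is satisfied but $\phi$ fails. By \cref{prop:simple MSSIC} and \cref{prop:Pi2-correspondence} this algebra corresponds to an upper continuous modal compingent algebra, giving the refutation. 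The cleanest packaging, however, is to appeal again to the machinery behind \cref{thm:completeness pi2 rules}: since $\univ$ is a universal modality, the quotient of the free/term algebra by the theory generated from $\Gamma$ is again simple, and the soundness-completeness correspondence of \cref{thm:completeness pi2 rules} applied to $\mathcal{S} = \MSSIC$ with $\Theta = \{\rho 6, \rho 7, \text{UC}\}$ transfers directly to deductions with hypotheses once the hypotheses are absorbed via $\univ$.

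\medskip
\noindent\textbf{Main obstacle.} The crux is that the $\Pi_2$-rules $\rho 6, \rho 7, \text{UC}$ involve fresh propositional variables $\overline{p}$ that must not occur in the side formula $\chi$, so one cannot naively append hypotheses from $\Gamma$ as premises without risking variable clashes or invalidating the eigenvariable conditions. The definition $\Gamma \vdash_\MSSICU \phi \iff \vdash_\MSSICU \univ(\gamma_1 \wedge \dots \wedge \gamma_n) \to \phi$ is precisely engineered to sidestep this: it converts a deduction-from-hypotheses into a plain theorem by internalizing the hypotheses behind the universal modality $\univ$, where they become a fixed formula immune to the fresh-variable restrictions. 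Thus the real work is the deduction-theorem-style lemma that this internalization is faithful, namely that $\vdash_\MSSICU \univ(\gamma_1 \wedge \dots \wedge \gamma_n) \to \phi$ holds exactly when $\phi$ follows from $\Gamma$ semantically over $\mathsf{UMComp}$. Once that lemma is in place, the theorem follows by chaining it with the weak completeness of \cref{thm:sound and complete} applied to the single formula $\univ(\gamma_1 \wedge \dots \wedge \gamma_n) \to \phi$, together with the universal-modality semantics that makes $v(\gamma_i) = 1$ for all $i$ equivalent to $v(\univ(\gamma_1 \wedge \dots \wedge \gamma_n)) = 1$ in any simple algebra.
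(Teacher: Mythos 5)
Your soundness direction is correct and is essentially the paper's argument. The gap is in the completeness direction. Suppose $\Gamma\nvdash_\MSSICU\phi$ with $\Gamma$ infinite. Unwinding the definition of $\vdash_\MSSICU$ with hypotheses, this means that $\nvdash_\MSSICU \univ\bigwedge\Gamma_0 \to \phi$ for \emph{every} finite $\Gamma_0 \subseteq \Gamma$, so \cref{thm:sound and complete} hands you, for each finite $\Gamma_0$, some algebra $\mathbf{B}_{\Gamma_0} \in \mathsf{UMComp}$ and valuation $v_{\Gamma_0}$ satisfying $\Gamma_0$ and refuting $\phi$. But to conclude $\Gamma\nvDash_{\mathsf{UMComp}}\phi$ you need a \emph{single} algebra and a \emph{single} valuation satisfying all of $\Gamma$ simultaneously while refuting $\phi$; the countermodels $\mathbf{B}_{\Gamma_0}$ vary with $\Gamma_0$, and nothing in your argument glues them together. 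This gluing is where the paper does its real work: it observes that the simple $\MSSIC$-algebras corresponding to upper continuous modal compingent algebras form an \emph{elementary} class (upper continuity, prima facie a condition on infinitary meets, is first-order expressible over simple $\MSSIC$-algebras because the relevant family $\{\Diamond b : a \prec b\}$ is definable and being its bound in the appropriate sense is a first-order condition), enriches the language with a constant $c_p$ for each propositional variable, and applies the first-order compactness theorem to $\mathcal{T} \cup \{\gamma'=1 : \gamma \in \Gamma\} \cup \{\phi' \neq 1\}$. Your proposal never mentions compactness or any substitute for it. Relatedly, your ``main obstacle'' paragraph locates the difficulty in the wrong place: the eigenvariable conditions on $\overline{p}$ are already neutralized by fiat in the definition of $\vdash_\MSSICU$ with hypotheses; the genuine obstacle is the passage from finite subsets of $\Gamma$ to all of $\Gamma$.

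Neither of your two suggested routes closes this gap. The ``cleanest packaging'' is circular: asserting that \cref{thm:completeness pi2 rules} ``transfers directly to deductions with hypotheses'' is precisely what must be proved, and the intermediate claim that the quotient of the term algebra by the theory generated from $\Gamma$ is simple is false---simplicity requires $\univ a \in \{0,1\}$ for \emph{every} $a$, which already fails for a propositional variable not occurring in $\Gamma$; one would need a maximality (ultrafilter-style) extension, and in addition Henkin-style witnesses guaranteeing $\Pi(\rho 6)$, $\Pi(\rho 7)$, and $\Pi(\text{UC})$, since these are $\forall\exists$ conditions that an arbitrary quotient has no reason to satisfy. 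The Lindenbaum-style route could in principle be carried out, but then the witness saturation (in a language expanded by fresh variables, compatibly with infinitely many hypotheses) and the simplicity of the resulting algebra constitute the entire content of the proof, and your sketch leaves both unaddressed.
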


\begin{proof}
We first prove the left-to-right implication. Suppose that $\Gamma\vdash_\UMSSIC\phi$. Then there is a finite subset $\Gamma_0 \subseteq \Gamma$ such that $\vdash_\UMSSIC \univ \bigwedge \Gamma_0 \to \varphi$. Thus, \cref{thm:sound and complete} yields that $(B, \prec, \Diamond) \vDash \univ \bigwedge \Gamma_0 \to \varphi$ for any $(B, \prec,\Diamond) \in \UMComp$. If $v$ is a valuation on $B \in \UMComp$ such that $v(\gamma)=1$ for every $\gamma \in \Gamma$, then $v( \bigwedge \Gamma_0)=1$, and hence $v(\univ\bigwedge \Gamma_0)=1$. Since $(B, \prec, \Diamond) \vDash \univ \bigwedge \Gamma_0 \to \varphi$, it follows that $v(\varphi)=1$. This shows that $\Gamma\vDash_{\UMComp}\phi$.

To prove the right-to-left implication, assume that $\Gamma\nvdash_\UMSSIC\phi$. 
Let $\mathcal{ML}^{+}$ be the first-order language of $\MSSIC$-algebras enriched with a set of constants $\{ c_p \}$, where $p$ ranges over all the propositional variables. The class of the simple $\MSSIC$-algebras corresponding to upper continuous modal compingent algebras is an elementary class. Indeed, it is possible to express that $\Diamond b$ is the join of $\{ \Diamond a : b \prec a\}$ for every element $b$ with a first-order sentence in $\mathcal{ML}^{+}$. We denote by $\mathcal{T}$ the elementary theory of this class. If $\psi$ is a formula in the language of $\MSSIC$, then we denote by $\psi'$ the term in $\mathcal{ML}^{+}$ obtained by replacing each propositional letter $p$ in $\psi$ with $c_p$.
We consider the set of sentences $\Sigma \coloneq \mathcal{T} \cup \bigcup \{ \gamma'=1 : \gamma \in \Gamma \} \cup \{ \phi' \neq 1\}$. 
Since $\Gamma\nvdash_\UMSSIC\phi$, for each finite subset $\Gamma_0$ of $\Gamma$ there is $\mathbf{B} \in \mathsf{UMComp}$ such that $\mathbf{B} \nvDash \univ \bigwedge \Gamma_0 \to \varphi$, and hence there is a valuation $v$ on $\mathbf{B}$ such that $v(\gamma)=1$ for every $\gamma \in \Gamma_0$ and $v(\varphi) \neq 1$.
Each model of $\mathcal{T}$ corresponds to an upper continuous modal contact algebra $\mathbf{B}$ together with a valuation on $B$, where the valuation maps a propositional variable $p$ to the interpretation of the constant $c_p$. Thus, every finite subset of $\Sigma$ is satisfiable. By the compactness theorem, there is $\mathbf{B} \in \mathsf{UMComp}$ together with a valuation $v$ such that $v(\gamma)=1$ for every $\gamma \in \Gamma$ and $v(\phi) \neq 1$. This shows that $\Gamma\nvDash_{\mathsf{UMComp}}\phi$.
\end{proof}

It remains to prove completeness of $\UMSSIC$ with respect to upper continuous modal de Vries algebras. We employ a generalization to the modal setting of the MacNeille completions of compingent algebras introduced in \cite[Def.~5.1.2]{Sa16}.
If $B$ is a Boolean algebra, we denote by $\overline{B}$ its MacNeille completion, and identify $B$ with the corresponding Boolean subalgebra of $\overline{B}$. We will use Roman letters to denote the elements of $B$ and Greek letters for the elements of $\overline{B}$. 

\begin{definition}\label{definition:MN:Completion}
Let $\mathbf{B}=(B,\prec,\Diamond)$ be an upper continuous modal compingent algebra. We define $\prec$ and $\Diamond$ on $\overline{B}$ as follows:
\[
\alpha\prec\beta\mbox{ iff there exist }a,b\in B\mbox{ such that }\alpha\leq a\prec b\leq\beta,
\]
\[
\Diamond\alpha=\bigwedge\{\Diamond a: a \in B, \ \alpha\leq a\}.
\]
We call $\overline{\mathbf{B}}=(\overline{B},\prec,\Diamond)$ the \emph{MacNeille completion} of $\mathbf{B}$.
\end{definition}

\begin{lemma}\label{Lemma:MN:Completion}
The MacNeille completion of an upper continuous modal compingent algebra is an upper continuous modal de Vries algebra, and the inclusion map of $\mathbf{B}$ into $\overline{\mathbf{B}}$ preserves and reflects $\prec$ and commutes with $\Diamond$.
\end{lemma}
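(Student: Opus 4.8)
The plan is to verify each of the three claims in \cref{Lemma:MN:Completion} in turn, proceeding from the easier structural facts to the harder preservation properties. First I would establish that $(\overline{B}, \prec)$ is a de Vries algebra. Since $\overline{B}$ is a complete Boolean algebra by construction, it suffices to verify the compingent axioms (S1)--(S8). The axioms (S1)--(S6) defining a contact algebra should follow routinely from the definition $\alpha \prec \beta$ iff $\alpha \le a \prec b \le \beta$ for some $a,b \in B$, using the corresponding axioms in $\mathbf{B}$ together with the fact that $B$ is join- and meet-dense in $\overline{B}$. For the interpolation axiom (S7), given $\alpha \prec \beta$ I would pick witnesses $a \prec b$ in $B$ with $\alpha \le a$ and $b \le \beta$, apply (S7) in $\mathbf{B}$ to get $c \in B$ with $a \prec c \prec b$, and then use $c$ (or rather the two instances $\alpha \le a \prec c$ and $c \prec b \le \beta$) to split the proximity; here I would also invoke density to handle (S8). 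These are essentially the arguments already carried out in \cite[Def.~5.1.2]{Sa16} for the non-modal case, so the novelty is confined to the behavior of $\Diamond$.

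Next I would turn to the inclusion map. That it preserves and reflects $\prec$ is almost immediate: for $a, b \in B$, if $a \prec b$ then clearly $a \prec b$ in $\overline{B}$ (take the witnesses to be $a, b$ themselves), and conversely if $a \prec b$ holds in $\overline{B}$ with witnesses $a' \prec b'$ satisfying $a \le a' \prec b' \le b$, then (S4) in $\mathbf{B}$ gives $a \prec b$. That the inclusion commutes with $\Diamond$, i.e.\ $\Diamond a = \bigwedge \{ \Diamond c : a \le c,\ c \in B \}$ for $a \in B$, is exactly the statement that $a$ itself attains the defining meet, which is trivial since $a \le a$. The more substantive part is checking that the $\Diamond$ defined on $\overline{B}$ is genuinely de Vries additive and upper continuous, which is what makes $\overline{\mathbf{B}}$ an upper continuous modal de Vries algebra.

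The hard part will be verifying that the extended $\Diamond$ is de Vries additive and upper continuous on all of $\overline{B}$, not merely on $B$. For upper continuity I must show $\Diamond \alpha = \bigwedge \{ \Diamond \beta : \alpha \prec \beta \}$ for every $\alpha \in \overline{B}$, which requires relating the meet over elements $a \in B$ above $\alpha$ (the definition of $\Diamond \alpha$) to the meet over elements $\beta \in \overline{B}$ with $\alpha \prec \beta$. The key leverage is the interpolation property (S7) in $\mathbf{B}$ together with upper continuity of $\Diamond$ on $B$: given $\alpha \le a$ in $B$, I can interpolate to find $a'$ with $\alpha \prec a' \prec a$ and compare $\Diamond a'$ with $\Diamond a$, so that the proximities $\alpha \prec \beta$ are cofinally witnessed by elements of $B$. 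For de Vries additivity, condition (1) $\Diamond 0 = 0$ transfers from $\mathbf{B}$, while condition (2) --- that $\alpha_1 \prec \beta_1$ and $\alpha_2 \prec \beta_2$ imply $\Diamond(\alpha_1 \vee \alpha_2) \prec \Diamond \beta_1 \vee \Diamond \beta_2$ --- I would reduce to the corresponding statement in $\mathbf{B}$ by unpacking the proximities into witnesses $a_i \prec b_i$ in $B$, applying de Vries additivity of $\Diamond$ on $\mathbf{B}$, and then transporting the resulting proximity back up to $\overline{B}$ via the definition of $\prec$ on $\overline{B}$. Throughout, the main technical obstacle is bookkeeping the interaction between the two meets (one indexed by $B$, one by $\overline{B}$) and using density plus interpolation to show they agree; I expect upper continuity to be the step demanding the most care, since it is precisely the property that could fail when passing to the completion if the witnesses in $B$ were not cofinal among the proximities in $\overline{B}$.
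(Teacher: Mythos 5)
Your overall architecture---deferring the axioms (S1)--(S8) for $(\overline{B},\prec)$ to the non-modal literature, reading off preservation and reflection of $\prec$ from the definition, and proving de Vries additivity by unpacking proximities into witnesses $a_i \prec b_i$ in $B$ and transporting back---is exactly the paper's proof. However, your argument for upper continuity, which you correctly identify as the delicate step, contains a genuine error. You propose: ``given $\alpha \le a$ in $B$, I can interpolate to find $a'$ with $\alpha \prec a' \prec a$.'' This step is impossible in general: (S7) interpolates inside a proximity $\prec$, whereas $\alpha \le a$ is only a lattice inequality. Concretely, take $\alpha = a$ for any $a \in B$ with $a \not\prec a$ (in $\RO(X)$ these are exactly the non-clopen regular opens, so such elements abound); the required $a'$ would satisfy $a \prec a' \prec a$, hence by (S5) $a \le a' \le a$, i.e.\ $a \prec a$, a contradiction. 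So the elements of $B$ lying proximally \emph{below} $a$ and \emph{above} $\alpha$, which your cofinality argument needs, need not exist at all.

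The correct argument (the paper's) runs upward rather than downward. Fix $a \in B$ with $\alpha \le a$; it suffices to show $\bigwedge\{\Diamond \beta : \alpha \prec \beta\} \le \Diamond a$. Upper continuity of $\Diamond$ on $B$ gives $\Diamond a = \bigwedge\{\Diamond b : a \prec b\}$ as a meet in $B$, and---a point your sketch never invokes---the MacNeille completion preserves all existing meets, so this equality persists in $\overline{B}$. For each such $b$ we have $\alpha \le a \prec b \le b$, hence $\alpha \prec b$ in $\overline{B}$, so $\Diamond b$ occurs among the $\Diamond\beta$ with $\alpha \prec \beta$; taking meets over these $b$ yields the desired inequality (the paper phrases this contrapositively: if $\gamma \nleq \Diamond\alpha$, pick $a \ge \alpha$ with $\gamma \nleq \Diamond a$, then $b$ with $a \prec b$ and $\gamma \nleq \Diamond b$). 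Separately, a smaller inaccuracy: that the inclusion commutes with $\Diamond$ is not ``trivial since $a \le a$.'' That observation only shows $\Diamond a$ is one of the elements being met; to conclude $\Diamond a = \bigwedge\{\Diamond c : a \le c,\ c \in B\}$ you must also know $\Diamond a$ is a lower bound of that set, i.e.\ that $\Diamond$ is order-preserving on $B$. This does hold here, by \cref{prop:upper cont implies finitely additive}, precisely because the algebra is assumed upper continuous---but it has to be said, since a de Vries additive operator need not be order-preserving in general.
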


\begin{proof}
Let $\mathbf{B}=(B, \prec, \Diamond)$ be an upper continuous modal compingent algebra. By \cite[Rem.~5.11]{BeBeSaVe19}, $(\overline{B},\prec)$ is a de Vries algebra. It is an immediate consequence of the definitions of $\prec$ and $\Diamond$ on $\overline{B}$ that their restrictions to $B$ coincide with $\prec$ and $\Diamond$ on $B$, so the inclusion map preserves and reflects $\prec$ and commutes with $\Diamond$.  It then remains to show that $\Diamond$ on $\overline{B}$ is de Vries additive and upper continuous.

Since $0 \in B$ and $\Diamond$ is de Vries additive on $B$, we have $\Diamond 0 = 0$. Suppose that $\alpha_1 \prec \beta_1$ and $\alpha_2 \prec \beta_2$ with $\alpha_1, \alpha_2, \beta_1, \beta_2 \in \overline{B}$. By definition of $\prec$ in $\overline{B}$, there are $a_1,a_2,b_1,b_2 \in B$ such that $\alpha_1 \le a_1 \prec b_1 \le \beta_1$ and $\alpha_2 \le a_2 \prec b_2 \le \beta_2$. Since $\overline{B}$ is a contact algebra, $\alpha_1 \vee \alpha_2 \le a_1 \vee a_2$. It follows from its definition that $\Diamond$ on $\overline{B}$ is order preserving. So, $\Diamond(\alpha_1 \vee \alpha_2) \le \Diamond(a_1 \vee a_2)$, $\Diamond b_1 \le \Diamond\beta_1$, and $\Diamond b_2 \le \Diamond\beta_2$. Since $a_1 \prec b_1$, $a_2 \prec b_2$, and $\Diamond$ is de Vries additive on $B$, we have $\Diamond(a_1 \vee a_2) \prec \Diamond b_1 \vee \Diamond b_2$. Thus,
\[
\Diamond(\alpha_1 \vee \alpha_2) \le \Diamond(a_1 \vee a_2) \prec \Diamond b_1 \vee \Diamond b_2 \le  \Diamond\beta_1 \vee \Diamond\beta_2,
\]
which yields $\Diamond(\alpha_1 \vee \alpha_2) \prec  \Diamond\beta_1 \vee \Diamond\beta_2$. This establishes that $\Diamond$ is de Vries additive on $\overline{B}$.

We now show that $\Diamond$ is upper continuous on $\overline{B}$, which amounts to proving that $\Diamond \alpha$ is the greatest lower bound of $\{\Diamond \beta : \alpha \prec \beta \}$ for each $\alpha \in \overline{B}$. Since $\Diamond$ is order preserving and $\alpha \prec \beta$ implies $\alpha \le \beta$, we have that $\Diamond \alpha$ is a lower bound of $\{\Diamond \beta : \alpha \prec \beta \}$. To show that it is the greatest one, it is sufficient to show that if $\gamma \nleq \Diamond \alpha$, then there is $b \in B$ such that $\alpha \prec b$ and $\gamma \nleq \Diamond b$. Assume that $\gamma \nleq \Diamond \alpha$. By the definition of $\Diamond \alpha$, there is $a \in B$ such that $\alpha \le a$ and $\gamma \nleq \Diamond a$. Since $\Diamond$ is upper continuous on $B$ and the MacNeille completion preserves all existing meets, $\Diamond a = \bigwedge \{ \Diamond b : a \prec b\}$ holds in $\overline{B}$. Thus, there is $b \in B$ such that $a \prec b$ and $\gamma \nleq \Diamond b$. This shows that $\Diamond \alpha= \bigwedge\{\Diamond \beta : \alpha \prec \beta \}$. Thus, $\Diamond$ is upper continuous on $\overline{B}$.
\end{proof}

The following theorem, which is the main result of the section, establishes strong completeness of $\UMSSIC$ with respect to the class $\UMDeV$\label{UMDeV} of upper continuous modal de Vries algebras.

\begin{theorem}\label{thm:completeness upper modal de vries}
For a set of formulas $\Gamma$ and a formula $\phi$, we have
\[
\Gamma\vdash_{\UMSSIC}\phi \iff \Gamma\vDash_{\UMDeV}\phi.
\]
\end{theorem}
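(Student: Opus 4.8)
The plan is to reduce both directions to the strong completeness of $\MSSICU$ with respect to upper continuous modal compingent algebras already proved in \cref{thm:strong sound and complete UMC}, using the class inclusion $\umdv \subseteq \mathsf{UMComp}$ together with the MacNeille completion of \cref{Lemma:MN:Completion}. For the left-to-right implication (strong soundness), I would simply observe that every upper continuous modal de Vries algebra is in particular an upper continuous modal compingent algebra, so $\umdv \subseteq \mathsf{UMComp}$. Hence if $\Gamma \vdash_{\MSSICU} \phi$, then $\Gamma \vDash_{\mathsf{UMComp}} \phi$ by \cref{thm:strong sound and complete UMC}, and a fortiori $\Gamma \vDash_{\umdv} \phi$.

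For the right-to-left implication (completeness) I would argue by contraposition. Assume $\Gamma \nvdash_{\MSSICU} \phi$. By \cref{thm:strong sound and complete UMC} there exist an upper continuous modal compingent algebra $\mathbf{B} = (B, \prec, \Diamond) \in \mathsf{UMComp}$ and a valuation $v$ on $B$ with $v(\gamma) = 1$ for all $\gamma \in \Gamma$ and $v(\phi) \neq 1$. Let $\overline{\mathbf{B}} = (\overline{B}, \prec, \Diamond)$ be its MacNeille completion; by \cref{Lemma:MN:Completion} this is an upper continuous modal de Vries algebra, i.e.\ $\overline{\mathbf{B}} \in \umdv$, and the inclusion $\iota \colon \mathbf{B} \hookrightarrow \overline{\mathbf{B}}$ is a Boolean embedding that preserves and reflects $\prec$ and commutes with $\Diamond$. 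I then define a valuation $\overline{v}$ on $\overline{\mathbf{B}}$ by setting $\overline{v}(p) = v(p)$ on propositional letters, viewing $B$ as a subalgebra of $\overline{B}$.

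The crux is the claim that $\overline{v}(\psi) = v(\psi)$ for every formula $\psi$ of the language of $\MSSIC$, which I would prove by induction on $\psi$. The Boolean cases are immediate since $\iota$ is a Boolean homomorphism, and the $\Box$-case follows because $\iota$ commutes with $\Diamond$ and hence with $\Box = \neg \Diamond \neg$. The one case requiring attention is the strict implication $\psi_1 \rightsquigarrow \psi_2$: here $v(\psi_1 \rightsquigarrow \psi_2) \in \{0,1\}$ is determined by whether $v(\psi_1) \prec v(\psi_2)$ holds in $B$, while $\overline{v}(\psi_1 \rightsquigarrow \psi_2)$ is determined by whether $\overline{v}(\psi_1) \prec \overline{v}(\psi_2)$ holds in $\overline{B}$; the two values agree precisely because $\iota$ both \emph{preserves} and \emph{reflects} $\prec$. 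With the claim in hand, $\overline{v}(\gamma) = v(\gamma) = 1$ for every $\gamma \in \Gamma$ while $\overline{v}(\phi) = v(\phi) \neq 1$, which witnesses $\Gamma \nvDash_{\umdv} \phi$ and completes the contrapositive.

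The main obstacle is the strict-implication step of the induction: it is exactly the point at which completeness would fail if the completion did not reflect $\prec$, and it is what makes the full strength of \cref{Lemma:MN:Completion}---rather than merely the de Vries additivity and upper continuity of the completed $\Diamond$---indispensable. Everything else is routine once the countermodel has been transported along the embedding.
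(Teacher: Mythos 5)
Your proposal is correct and follows essentially the same route as the paper: strong soundness via the inclusion $\umdv \subseteq \mathsf{UMComp}$ together with \cref{thm:strong sound and complete UMC}, and completeness by transporting a countermodel from an upper continuous modal compingent algebra into its MacNeille completion using \cref{Lemma:MN:Completion}. The paper compresses your inductive argument into the phrase ``$v$ can be thought of as a valuation on $\overline{\mathbf{B}}$,'' and your identification of the reflection of $\prec$ as the crucial point in the strict-implication case is exactly the content that justifies that phrase.
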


\begin{proof}
We first show the left-to-right implication. Assume that $\Gamma\vdash_{\UMSSIC}\phi$. By \cref{thm:strong sound and complete UMC}, $\UMSSIC$ is strongly sound with respect to the class of upper continuous modal compingent algebras $\UMComp$. Since $\UMDeV \subseteq \UMComp$, it follows that $\Gamma\vDash_{\UMDeV}\phi$. To show the other implication, assume that $\Gamma\nvdash_{\UMSSIC}\phi$. By \cref{thm:strong sound and complete UMC}, there is an upper continuous modal compingent algebra $\mathbf{B}$ and a valuation $v$ on $B$ such that $v(\gamma)=1$ for every $\gamma \in \Gamma$ and $v(\phi) \neq 1$. By \cref{Lemma:MN:Completion}, the inclusion of $\mathbf{B}$ into $\overline{\mathbf{B}}$ preserves and reflects $\prec$ and commutes with $\Diamond$. Thus, $v$ can be thought of as a valuation on $\overline{\mathbf{B}}$, which is a member of $\UMDeV$ by \cref{Lemma:MN:Completion}. It follows that $\Gamma\nvDash_{\UMDeV}\phi$.
\end{proof}

We write $\Gamma\vDash_{\MKHaus}\varphi$ to denote that a formula $\varphi$ is a semantic consequence of a set of formulas $\Gamma$ with respect to the class of modal contact algebras of the form $(\RO(X), \prec, \Diamond^U)$ for some modal compact Hausdorff space $(X,R)$. As a consequence of \cref{thm:modal KHaus to upper,thm:upper to modal KHaus,thm:completeness upper modal de vries}, we obtain the following corollary stating that $\UMSSIC$ is strongly sound and complete with respect to such a class of upper continuous modal contact algebras.

\begin{corollary}\label{cor:MSSICU completeness MKHaus}
For a set of formulas $\Gamma$ and a formula $\phi$, we have
\[
\Gamma\vdash_{\UMSSIC}\phi \iff \Gamma\vDash_{\MKHaus}\phi.
\]
\end{corollary}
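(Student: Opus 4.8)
The plan is to derive \cref{cor:MSSICU completeness MKHaus} directly from \cref{thm:completeness upper modal de vries} by showing that the two semantic consequence relations $\vDash_{\umdv}$ and $\vDash_{\mathsf{MKHaus}}$ coincide. In other words, the whole corollary reduces to the claim that, for every $\Gamma$ and $\phi$, we have $\Gamma\vDash_{\umdv}\phi$ if and only if $\Gamma\vDash_{\mathsf{MKHaus}}\phi$. Once this is established, the corollary follows immediately by composing it with the biconditional $\Gamma\vdash_{\MSSICU}\phi \iff \Gamma\vDash_{\umdv}\phi$ already proved in \cref{thm:completeness upper modal de vries}.

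To prove the equivalence of the two consequence relations, I would use the representation results \cref{thm:modal KHaus to upper} and \cref{thm:upper to modal KHaus}. First, for the direction $\Gamma\vDash_{\umdv}\phi \Rightarrow \Gamma\vDash_{\mathsf{MKHaus}}\phi$: every algebra of the form $(\RO(X),\prec,\Diamond^U)$ arising from a modal compact Hausdorff space $(X,R)$ is an upper continuous modal de Vries algebra by \cref{thm:modal KHaus to upper}, so the class appearing in $\vDash_{\mathsf{MKHaus}}$ is a subclass of $\umdv$. Hence validity over the larger class $\umdv$ entails validity over the smaller class, giving this implication at once. For the converse $\Gamma\vDash_{\mathsf{MKHaus}}\phi \Rightarrow \Gamma\vDash_{\umdv}\phi$: I would take an arbitrary upper continuous modal de Vries algebra $\mathbf{B}$ and a valuation $v$ witnessing $v(\gamma)=1$ for all $\gamma\in\Gamma$. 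By \cref{thm:upper to modal KHaus}, $\mathbf{B}$ is isomorphic to some $(\RO(X),\prec,\Diamond^U)$ for a modal compact Hausdorff space $(X,R)$, and crucially this isomorphism is a structure-preserving bijection. Transporting $v$ along the isomorphism yields a valuation on $(\RO(X),\prec,\Diamond^U)$ that still satisfies every $\gamma\in\Gamma$; since $\Gamma\vDash_{\mathsf{MKHaus}}\phi$, we get $v(\phi)=1$ there, and pulling back through the isomorphism gives $v(\phi)=1$ in $\mathbf{B}$.

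The key point that makes this argument clean is that the isomorphisms in \cref{thm:upper to modal KHaus} are genuine structure-preserving bijections, as the excerpt emphasizes just before and after that theorem (contrasting with the weaker notion of isomorphism in the category of modal de Vries algebras). This ensures that validity of a formula is actually preserved and reflected under the representation, so that being isomorphic to an $\RO(X)$-algebra is enough to conclude semantic equivalence formula-by-formula. The main obstacle to watch for is therefore simply verifying that the isomorphism commutes with the interpretation of \emph{all} the connectives of $\MSSIC$ — the Boolean operations, the strict implication $\rightsquigarrow$ (equivalently $\prec$), and the modal $\Diamond$ — so that $v$ and its transport agree on every formula by a routine induction; the structure-preservation from \cref{thm:upper to modal KHaus} supplies exactly what is needed at each inductive step.

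In summary, the proof is a short two-inclusion argument: $\vDash_{\mathsf{MKHaus}}$ is validity over a subclass of $\umdv$ (by \cref{thm:modal KHaus to upper}) while every member of $\umdv$ is represented as such an $\RO(X)$-algebra up to structure-preserving isomorphism (by \cref{thm:upper to modal KHaus}), so the two relations coincide; combining this with \cref{thm:completeness upper modal de vries} gives the stated biconditional $\Gamma\vdash_{\MSSICU}\phi \iff \Gamma\vDash_{\mathsf{MKHaus}}\phi$.
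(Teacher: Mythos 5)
Your proposal is correct and is essentially the paper's own argument: the paper derives this corollary directly from \cref{thm:modal KHaus to upper,thm:upper to modal KHaus,thm:completeness upper modal de vries}, exactly as you do, with the two semantic consequence relations identified via the subclass inclusion in one direction and the structure-preserving isomorphism of \cref{thm:upper to modal KHaus} in the other. Your emphasis on the isomorphisms being genuine structure-preserving bijections (rather than the weaker isomorphisms of the category of modal de Vries algebras) is precisely the point that makes the transport of valuations legitimate.
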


\section{Admissibility of $\Pi_2$-rules in $\MSSIC$}\label{Sec:Admissibility}

In this section we prove admissibility in $\MSSIC$ of several $\Pi_2$-rules by utilizing relational semantics. In particular, we show that the three $\Pi_2$-rules in the definition of $\UMSSIC$ are admissible in $\MSSIC$, and hence $\UMSSIC$ and $\MSSIC$ coincide.

Formulas in the language of $\MSSIC$ can be interpreted in Kripke frames $(X,T,S)$ where $T$ is a ternary relation and $S$ is a binary relation. Let $v \colon \text{Prop} \to \wp(X)$ be a valuation. The Boolean connectives are interpreted in the standard way and for $x \in X$ we set
\begin{align*}
x \vDash_v \phi \rightsquigarrow \psi \quad & \text{iff} \quad \forall y,z \in X \, (Txyz \text{ and } y \vDash_v \phi \text{ imply } z \vDash_v \psi )\\
x \vDash_v \Diamond \phi \quad & \text{iff} \quad \exists y \in X \, (xSy \text{ and } y \vDash_v \phi).
\end{align*}
For a formula $\phi$, we write $v(\phi)=\{ x \in X : x \vDash_v \phi \}$ and say that $\phi$ is \emph{valid} in the frame $(X,T,S)$ if $v(\phi)=X$. We say that a Kripke frame is an \emph{$\MSSIC$-frame} if it validates all the theorems of $\MSSIC$.
 
The following theorem states the Kripke completeness of $\MSSIC$ and provides a first-order characterization of $\MSSIC$-frames. 
Using simple syntactic manipulations, it is possible to rewrite all the axioms of $\MSSIC$ into Sahlqvist formulas (see, e.g., \cite[Def.~3.51]{BRV01} for the definition of Sahlqvist formulas in polyadic languages and \cite{GorankoV06} for their generalization to inductive formulas). It then follows from Sahlqvist's theorem (see, e.g., \cite[Thms.~3.54, 4.42]{BRV01}) that all the axioms of $\MSSIC$ are canonical and have a first-order correspondent. Since we also need a characterization of the $\MSSIC$-frames, we instead prefer to show that the SQEMA algorithm \cite{CoGoVa06b} succeeds on computing the first-order correspondents of all the axioms of $\MSSIC$, which guarantees that all the axioms are canonical. We postpone the proof of the theorem to the appendix as it requires lengthy computations.

\begin{theorem}\label{prop:L Kripke complete}
A Kripke frame $(X,T,S)$ is an $\MSSIC$-frame iff for all $x,y,z,w \in X$ we have 
\begin{enumerate}
\item\label{prop:L Kripke complete:item1} $Txxx$;
\item\label{prop:L Kripke complete:item2} $Txyz$ implies $Txzy$;
\item\label{prop:L Kripke complete:item3} the binary relation $E_T$ defined by $x E_T y$ iff $\exists z \, (Txyz)$ is an equivalence relation;
\item\label{prop:L Kripke complete:item4} $Txyz$ and $xE_Tw$ imply $Twyz$;
\item\label{prop:L Kripke complete:item5} $Txyz$ and $ySw$ imply that there is $u \in X$ such that $Txwu$ and $zSu$.
\end{enumerate}
Moreover, $\MSSIC$ is Kripke complete: a formula $\phi$ is a theorem of $\MSSIC$ iff it is valid in all $\MSSIC$-frames.
\end{theorem}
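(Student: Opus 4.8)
The plan is to prove the two halves of \cref{prop:L Kripke complete} by different means: the first-order characterization of $\MSSIC$-frames (items \ref{prop:L Kripke complete:item1}--\ref{prop:L Kripke complete:item5}) via Sahlqvist/SQEMA correspondence, and the Kripke completeness via canonicity. Since the axioms of $\MSSIC$ are (A1)--(A5), (A8)--(A11), (K), and (Add), together with the rules (MP), (R), (N), I would first translate the binary connective $\rightsquigarrow$ and the modality $\Box$ into the polyadic relational language $(X,T,S)$, so that $\rightsquigarrow$ is the binary modality interpreted by the ternary relation $T$ (with the clause $x \vDash_v \phi \rightsquigarrow \psi$ iff for all $y,z$, $Txyz$ and $y \vDash_v \phi$ imply $z \vDash_v \psi$) and $\Diamond$ is the diamond interpreted by $S$. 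The key observation is that each axiom, after rewriting using the defined abbreviation $\univ \phi = \top \rightsquigarrow \phi$ (which becomes a box over the relation $E_T$), is a Sahlqvist formula in this polyadic language, so I would run the SQEMA algorithm on each one and read off its local first-order correspondent; collecting these correspondents yields precisely conditions \ref{prop:L Kripke complete:item1}--\ref{prop:L Kripke complete:item5}.

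For the correspondence computations I expect the following pattern. Axiom (A1) and the conjunctive axioms (A2), (A3) encode the fact that $\rightsquigarrow$ behaves as a (dual) normal binary modality and contribute the basic closure conditions; (A4) forces the reflexivity-type condition, yielding \ref{prop:L Kripke complete:item1} $Txxx$; (A5) (the contraposition symmetry $(\phi \rightsquigarrow \psi) \biimpl (\ne\psi \rightsquigarrow \ne\phi)$) gives the symmetry \ref{prop:L Kripke complete:item2} $Txyz \Rightarrow Txzy$; the $\univ$-axioms (A8) ($\univ\phi \to \univ\univ\phi$), (A9) ($\ne\univ\phi \to \univ\ne\univ\phi$), and the reflexivity half built into $\univ$ together make $E_T$ transitive, Euclidean, and reflexive, hence an equivalence relation, giving \ref{prop:L Kripke complete:item3}; (A10) ($(\phi\rightsquigarrow\psi)\biimpl\univ(\phi\rightsquigarrow\psi)$) together with (A11) then force the coherence condition \ref{prop:L Kripke complete:item4} that $T$ is invariant along $E_T$-classes in its first argument; and finally the modal interaction axiom (Add), $(\phi\rightsquigarrow\psi)\to(\Box\phi\rightsquigarrow\Box\psi)$, is the only axiom involving both $T$ and $S$, and its Sahlqvist correspondent is exactly the commutation/confluence condition \ref{prop:L Kripke complete:item5}, namely that $Txyz$ and $ySw$ imply the existence of $u$ with $Txwu$ and $zSu$. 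The bookkeeping needed to verify that $\univ$ indeed corresponds to the box of an equivalence relation, and that the $\univ$-axioms really collapse to items \ref{prop:L Kripke complete:item3}--\ref{prop:L Kripke complete:item4}, is where the computation is most delicate.

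For the second assertion, Kripke completeness, I would invoke canonicity. Because SQEMA succeeds on every axiom of $\MSSIC$, each axiom is canonical (this is the guarantee SQEMA provides, and is the reason the text prefers SQEMA over a bare Sahlqvist appeal). Canonicity of all axioms, combined with the fact that $\MSSIC$ is a normal polyadic modal system closed under its inference rules, means the canonical frame of $\MSSIC$ validates all the axioms and therefore satisfies conditions \ref{prop:L Kripke complete:item1}--\ref{prop:L Kripke complete:item5}; hence the canonical frame is an $\MSSIC$-frame. The standard truth lemma for the canonical model then shows that any non-theorem is refuted at some point of the canonical model, giving completeness: $\phi$ is a theorem of $\MSSIC$ iff $\phi$ is valid in all $\MSSIC$-frames.

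The main obstacle I anticipate is not conceptual but computational and organizational: faithfully running SQEMA on the $\univ$-axioms and on (Add) in the polyadic language, keeping track of the monadic $E_T$-relation induced by the binary modality $\rightsquigarrow$, and checking that the resulting first-order conditions simplify exactly to the clean list \ref{prop:L Kripke complete:item1}--\ref{prop:L Kripke complete:item5} rather than some logically equivalent but messier set. This is precisely why the full argument is deferred to the appendix; in the main text I would record the strategy (Sahlqvist shape, SQEMA success, canonicity, canonical-model completeness) and relegate the explicit elimination steps for each axiom to the appendix.
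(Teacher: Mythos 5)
Your proposal matches the paper's own proof: the appendix runs SQEMA on each axiom of $\MSSIC$ (rewritten via the binary modality $\nabla(\phi,\psi)=\neg\phi\rightsquigarrow\psi$), reads off exactly the correspondents you predict --- (A4) yields $Txxx$, (A5) yields symmetry, (A8)/(A9) make the relation interpreting $\univ$ an equivalence, the left-to-right half of (A10) yields condition \eqref{prop:L Kripke complete:item4}, and (Add) yields condition \eqref{prop:L Kripke complete:item5} --- and then deduces Kripke completeness from the canonicity that SQEMA's success guarantees. The only organizational difference is that the paper treats $\univ$ as a primitive modality with its own accessibility relation $E$ and adds a linking axiom (A0) whose correspondent forces $E$ to coincide with $E_T$ (it also observes that (A8), (A11), and half of (A10) are derivable and so contribute no conditions), rather than unfolding $\univ\phi$ as $\top\rightsquigarrow\phi$ inside each axiom as you propose.
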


We prove that we can restrict our attention to $\MSSIC$-frames containing a single $E_T$-equivalence class.

\begin{definition}
We call an $\MSSIC$-frame \emph{simple} if it consists of a single $E_T$-equivalence class. 
\end{definition}

\begin{proposition}\label{prop:ET classes generated subframes}
Each $E_T$-equivalence class of an $\MSSIC$-frame is a generated subframe.
\end{proposition}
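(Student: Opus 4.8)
The plan is to verify the two closure conditions that characterize a generated subframe in this polyadic setting, applied to an arbitrary $E_T$-equivalence class $C$. Since the semantics of $\rightsquigarrow$ quantifies universally over the second and third coordinates of $T$, and the semantics of $\Diamond$ quantifies existentially over $S$-successors, the evaluation of every formula at a point of $C$ will depend only on points of $C$ provided that: (i) $x \in C$ and $Txyz$ imply $y,z \in C$, and (ii) $x \in C$ and $xSy$ imply $y \in C$. So the whole argument reduces to establishing (i) and (ii) from conditions (1)--(5) of \cref{prop:L Kripke complete}.

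First I would dispatch the closure condition (i) for $T$, which is the routine part. If $x \in C$ and $Txyz$, then the triple $Txyz$ itself witnesses $x\,E_T\,y$. Applying the symmetry condition (2) of \cref{prop:L Kripke complete} to $Txyz$ gives $Txzy$, which witnesses $x\,E_T\,z$. Because $E_T$ is an equivalence relation by condition (3) of \cref{prop:L Kripke complete} and $x \in C$, both $y$ and $z$ lie in the same $E_T$-class $C$.

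The hard part is closure condition (ii) for $S$, since $S$ is not immediately linked to $E_T$; the crux is to \emph{manufacture} a $T$-triple out of a bare $S$-edge, and this is where the zig-zag interaction condition (5) of \cref{prop:L Kripke complete} enters. Suppose $x \in C$ and $xSy$. Reflexivity, condition (1) of \cref{prop:L Kripke complete}, yields $Txxx$. I would then apply condition (5) to the triple $Txxx$ together with the edge $xSy$, reading the premise ``$ySw$'' with the middle coordinate of the triple (namely $x$) in the role of $y$ and with $w \coloneq y$. This produces some $u \in X$ with $Txyu$ (and $xSu$). The triple $Txyu$ witnesses $x\,E_T\,y$, so $y \in C$ as required. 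The only subtlety to watch is the correct matching of the variables of condition (5) to the instance $Txxx$, $xSy$; once that bookkeeping is done, only the conjunct $Txyu$ is needed and the $xSu$ conjunct is discarded.

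With (i) and (ii) in hand, $C$ equipped with the restrictions of $T$ and $S$ is closed under both relations, hence is a generated subframe, which completes the proof.
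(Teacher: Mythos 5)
Your proof is correct and follows essentially the same route as the paper's: closure under $T$ via the symmetry condition (2) of \cref{prop:L Kripke complete}, and closure under $S$ by instantiating condition (5) with the reflexivity triple $Txxx$ to produce a witness $Txyu$ for $x E_T y$. The variable bookkeeping you spell out for condition (5) matches the paper's (implicit) instantiation exactly, so there is nothing to add.
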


\begin{proof}
It is sufficient to show that $Txyz$ implies $y,z \in E_T[x]$ and that $xSy$ implies $xE_Ty$ for any $x,y,z \in X$. If $Txyz$, then $xE_Ty$ by definition of $E_T$. By \cref{prop:L Kripke complete}\eqref{prop:L Kripke complete:item2}, if $Txyz$, then $Txzy$, and so $xE_Tz$. Thus, $y,z \in E_T[x]$. Suppose that $xSy$. By \cref{prop:L Kripke complete}\eqref{prop:L Kripke complete:item1}, $Txxx$, and hence there is $u \in X$ such that $Txyu$ and $xSu$ because of \cref{prop:L Kripke complete}\eqref{prop:L Kripke complete:item5}. Then $Txyu$ implies $xE_Ty$ by the definition of $E_T$.
\end{proof}

\begin{corollary}\label{cor: MSSIC complete simple frames}
A formula $\phi$ is a theorem of $\MSSIC$ iff it is valid in all simple $\MSSIC$-frames.
\end{corollary}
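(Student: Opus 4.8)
The plan is to derive the corollary directly from the Kripke completeness of $\MSSIC$ (\cref{prop:L Kripke complete}) together with \cref{prop:ET classes generated subframes}, the two results just established. The left-to-right implication is immediate: if $\phi$ is a theorem of $\MSSIC$, then by \cref{prop:L Kripke complete} it is valid in every $\MSSIC$-frame, and in particular in every simple one, since simple $\MSSIC$-frames are $\MSSIC$-frames of a special kind.

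For the converse I would show that validity in all simple $\MSSIC$-frames already forces validity in all $\MSSIC$-frames, after which \cref{prop:L Kripke complete} applies. Let $(X,T,S)$ be an arbitrary $\MSSIC$-frame, $v$ a valuation, and $x \in X$. I would first observe that the $E_T$-equivalence class $E_T[x]$, equipped with the restrictions of $T$ and $S$, is a generated subframe by \cref{prop:ET classes generated subframes}, and that it consists of a single $E_T$-class, hence is a simple $\MSSIC$-frame. Two standard facts about generated subframes are needed here: (i) a generated subframe of an $\MSSIC$-frame is again an $\MSSIC$-frame, because passing to a generated subframe preserves validity of all formulas and $\MSSIC$-frames are exactly the frames validating the theorems of $\MSSIC$; and (ii) for every formula $\psi$ and every point $y$ of $E_T[x]$, we have $y \vDash_v \psi$ in $(X,T,S)$ iff $y \vDash_{v'} \psi$ in $E_T[x]$, where $v'$ is the restriction of $v$ to $E_T[x]$.

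Granting these, the argument closes quickly. Since $E_T[x]$ is a simple $\MSSIC$-frame, the hypothesis gives that $\phi$ is valid in it; in particular $x \vDash_{v'} \phi$ in $E_T[x]$, and by (ii) we conclude $x \vDash_v \phi$ in $(X,T,S)$. As $x$ and $v$ were arbitrary, $\phi$ is valid in $(X,T,S)$, and as $(X,T,S)$ was an arbitrary $\MSSIC$-frame, $\phi$ is valid in all $\MSSIC$-frames; \cref{prop:L Kripke complete} then yields $\vdash_{\MSSIC} \phi$.

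The only step carrying genuine content is (ii) (from which (i) follows), namely that the standard generated-subframe truth-preservation theorem applies to the ternary-plus-binary signature of $\MSSIC$-frames. This is where I expect the main, though still routine, work to lie: it is verified by induction on the complexity of $\psi$, using that the semantic clauses for $\rightsquigarrow$ and $\Diamond$ quantify only over the $T$- and $S$-successors of a point, which by \cref{prop:ET classes generated subframes} remain inside $E_T[x]$. I would either cite the general preservation result for polyadic modal logic or record this short induction explicitly.
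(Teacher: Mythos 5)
Your proposal is correct and follows essentially the same route as the paper: the paper likewise combines \cref{prop:L Kripke complete} with \cref{prop:ET classes generated subframes}, observing that every $\MSSIC$-frame is the disjoint union of its $E_T$-classes, which are generated subframes and hence simple $\MSSIC$-frames, so validity transfers between a frame and its classes. The only difference is presentational: the paper leaves the generated-subframe truth-preservation fact (your step (ii)) and the fact that generated subframes of $\MSSIC$-frames are again $\MSSIC$-frames implicit as standard, whereas you spell them out.
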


\begin{proof}
Each $\MSSIC$-frame is the disjoint union of its $E_T$-equivalence classes, and every $E_T$-class is a generated subframe by \cref{prop:ET classes generated subframes}. Thus, a formula is valid in an $\MSSIC$-frame iff it is valid in all its $E_T$-equivalence classes, which are simple $\MSSIC$-frames. It then follows from the Kripke completeness of $\MSSIC$ (see \cref{prop:L Kripke complete}) that a formula is a theorem of $\MSSIC$ iff it is valid in all simple $\MSSIC$-frames.
\end{proof}

We show that in a simple $\MSSIC$-frame the ternary relation can be replaced by a binary one.

\begin{definition}\label{d:modal contact frame}
A \emph{modal contact frame} is a triple $(X,R,S)$, where $X \neq \emptyset$ and $R,S$ are binary relations such that:
\begin{enumerate}
\item\label{d:modal contact frame:item1} $R$ is reflexive and symmetric,
\item\label{d:modal contact frame:item2} for all $x,y,z \in X$, $xRy$ and $xSz$ imply that there is $w \in X$ such that $zRw$ and $ySw$.
\end{enumerate}
\begin{center}
\begin{tikzpicture}
\node [left] at (0,0) {$x$};
\node [left] at (0,2) {$z$};
\node [right] at (2,0) {$y$};
\node [right] at (2,2) {$w$};
\draw [->] (0,0) -- (0,2);
\draw [dashed, ->] (2,0) -- (2,2);
\draw [dashed] (0,2) -- (2,2);
\draw  (0,0) -- (2,0);
\node [above] at (1,2) {$R$};
\node [above] at (1,0) {$R$};
\node [left] at (0,1) {$S$};
\node [right] at (2,1) {$S$};
\end{tikzpicture}
\end{center}
\end{definition}

\begin{remark}
The second condition in \cref{d:modal contact frame} is known as the \emph{Church-Rosser} (or \emph{confluence}) property (see, e.g., \cite[p.~222]{GKWZ03}). When $R$ is symmetric, as in modal contact frames, the condition is equivalent to the \emph{right commutativity} property: for all $x,y,w \in X$, if $xRy$ and $ySw$, then there exists $z \in X$ such that $zRw$ and $xSz$. 
\end{remark}

If $(X,T,S)$ is a simple $\MSSIC$-frame, then we define a binary relation $R_T$ on $X$ by setting $xR_Ty$ iff $Txxy$. Vice versa, if $(X,R,S)$ is a modal contact frame, let $T_R$ be the ternary relation on $X$ defined by $T_R xyz$ iff $yRz$.

\begin{proposition}\label{prop:corr L-frames and modal con frames}
\mbox{}\begin{enumerate}
\item\label{prop:corr L-frames and modal con frames:item1} If $(X,T,S)$ is a simple $\MSSIC$-frame, then $(X,R_T,S)$ is a modal contact frame and $T=T_{R_T}$.
\item\label{prop:corr L-frames and modal con frames:item2} If $(X,R,S)$ is a modal contact frame, then $(X,T_R,S)$ is a simple $\MSSIC$-frame  and $R=R_{T_R}$.
\item\label{prop:corr L-frames and modal con frames:item3} The mappings $(X,T,S) \mapsto (X,R_T,S)$ and $(X,R,S) \mapsto (X,T_R,S)$ yield a 1-1 correspondence between simple $\MSSIC$-frames and modal contact frames.
\end{enumerate}
\end{proposition}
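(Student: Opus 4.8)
The plan is to prove the two substantive parts \eqref{prop:corr L-frames and modal con frames:item1} and \eqref{prop:corr L-frames and modal con frames:item2}; part \eqref{prop:corr L-frames and modal con frames:item3} is then purely formal, since these two parts exhibit the assignments $T \mapsto R_T$ and $R \mapsto T_R$ as mutually inverse via the identities $T = T_{R_T}$ and $R = R_{T_R}$. The single fact that drives the whole argument is that in a \emph{simple} $\MSSIC$-frame the relation $E_T$ is the total relation $X \times X$ (a single equivalence class), so that \cref{prop:L Kripke complete}\eqref{prop:L Kripke complete:item4} forces the ternary relation $T$ to be independent of its first coordinate: for fixed $y,z$ either $Twyz$ holds for all $w$ or for no $w$, and in particular $Txyz \iff Tyyz$.

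I would first dispatch part \eqref{prop:corr L-frames and modal con frames:item2}, which is the lighter computation. Given a modal contact frame $(X,R,S)$, the equality $R = R_{T_R}$ is immediate by unfolding definitions, since $x R_{T_R} y$ iff $T_R xxy$ iff $xRy$. To see that $(X,T_R,S)$ is a simple $\MSSIC$-frame I would check the five conditions of \cref{prop:L Kripke complete}: conditions \eqref{prop:L Kripke complete:item1} and \eqref{prop:L Kripke complete:item2} are exactly reflexivity and symmetry of $R$, i.e.\ \cref{d:modal contact frame}\eqref{d:modal contact frame:item1}; reflexivity of $R$ makes $\exists z\,(yRz)$ always true, so $E_{T_R} = X \times X$, which is an equivalence relation and simultaneously witnesses that the frame is simple, yielding \eqref{prop:L Kripke complete:item3}; condition \eqref{prop:L Kripke complete:item4} holds trivially because $T_R xyz \iff yRz$ does not mention $x$; and condition \eqref{prop:L Kripke complete:item5}, after the rewriting $T_R xyz \iff yRz$, is literally the coherence clause \cref{d:modal contact frame}\eqref{d:modal contact frame:item2} read with $yRz$ and $ySw$ as its two hypotheses.

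Next I would treat part \eqref{prop:corr L-frames and modal con frames:item1}. Given a simple $\MSSIC$-frame $(X,T,S)$, first-coordinate independence gives $Txyz \iff Tyyz \iff yR_Tz \iff T_{R_T} xyz$, which is exactly $T = T_{R_T}$. It then remains to check that $(X,R_T,S)$ is a modal contact frame. Reflexivity of $R_T$ is \cref{prop:L Kripke complete}\eqref{prop:L Kripke complete:item1}. For symmetry, from $x R_T y$, i.e.\ $Txxy$, I would apply \cref{prop:L Kripke complete}\eqref{prop:L Kripke complete:item2} to get $Txyx$ and then rewrite it as $Tyyx$ by first-coordinate independence, i.e.\ $y R_T x$. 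Finally, for the coherence clause \cref{d:modal contact frame}\eqref{d:modal contact frame:item2}, from $x R_T y$ (that is $Txxy$) and $xSz$, \cref{prop:L Kripke complete}\eqref{prop:L Kripke complete:item5} supplies a $u$ with $Txzu$ and $ySu$; rewriting $Txzu$ as $Tzzu$ gives $z R_T u$, so $u$ is the desired witness.

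I expect the only real subtlety to lie in the ternary-to-binary reduction itself, namely establishing $T = T_{R_T}$ together with the symmetry and coherence clauses, all of which rest on recognising that simplicity forces $E_T$ to be total and hence renders the first coordinate of $T$ inert. Once that observation is isolated, every one of the five frame conditions matches exactly one modal-contact-frame axiom under a direct substitution, and no genuinely hard step remains; part \eqref{prop:corr L-frames and modal con frames:item3} then follows at once from the established identities $T = T_{R_T}$ and $R = R_{T_R}$.
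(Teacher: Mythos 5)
Your proof is correct and follows essentially the same route as the paper: both arguments verify the frame conditions of \cref{prop:L Kripke complete} against the modal contact frame axioms one by one, with the key observation (simplicity plus condition \eqref{prop:L Kripke complete:item4} making the first coordinate of $T$ inert) doing all the work — the paper phrases it as ``$xR_Ty$ iff $\exists z\,(Tzxy)$'' rather than your ``first-coordinate independence,'' but these are the same use of the same fact. The only cosmetic difference is that you treat part \eqref{prop:corr L-frames and modal con frames:item2} before part \eqref{prop:corr L-frames and modal con frames:item1}.
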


\begin{proof}
\eqref{prop:corr L-frames and modal con frames:item1}. Let $x,y \in X$. Since all the elements in $X$ are $E_T$-related, \cref{prop:L Kripke complete}\eqref{prop:L Kripke complete:item4} implies that $xR_Ty$ iff there exists $z\in X$ such that $Tzxy$. By \cref{prop:L Kripke complete}\eqref{prop:L Kripke complete:item1}, $R_T$ is reflexive. If $xR_Ty$, then $Txxy$, which implies $Txyx$ by \cref{prop:L Kripke complete}\eqref{prop:L Kripke complete:item2}. So, $xR_Ty$ implies $yR_Tx$, and hence $R_T$ is symmetric. It remains to show that $R_T$ satisfies condition \eqref{d:modal contact frame:item2} of \cref{d:modal contact frame}. Suppose $xR_Ty$ and $xSz$. Then $Txxy$ and \cref{prop:L Kripke complete}\eqref{prop:L Kripke complete:item5} yields $w$ such that $Txzw$ and $ySw$. Thus, $zR_Tw$ and $ySw$. Moreover, $T_{R_T}xyz$ iff $yR_Tz$ iff $Tyyz$ iff $Txyz$, where the last equivalence follows from \cref{prop:L Kripke complete}\eqref{prop:L Kripke complete:item4}.
 
\eqref{prop:corr L-frames and modal con frames:item2}. We first show that all the elements of $X$ are $E_{T_R}$-related. Since $R$ is reflexive, we have $yRy$ for all $y \in X$. Thus, $T_Rxyy$, and so $xE_{T_R}y$ for all $x \in X$. It then follows that $E_{T_R}$ is an equivalence relation. Moreover, $yR_{T_R}z$ iff $T_Ryyz$ iff $yRz$. It is left to show that conditions \eqref{prop:L Kripke complete:item1}--\eqref{prop:L Kripke complete:item5} of \cref{prop:L Kripke complete} hold in $(X, T_R, S)$. That $R$ is reflexive also implies that $T_Rxxx$, and hence \eqref{prop:L Kripke complete:item1} holds. To show \eqref{prop:L Kripke complete:item2}, suppose that $T_Rxyz$. Then $yRz$, and so $zRy$ by the symmetry of $R$. Thus, $T_Rxzy$. To prove \eqref{prop:L Kripke complete:item4}, observe that if $T_Rxyz$, then $yRz$, which implies $T_Rwyz$. To show \eqref{prop:L Kripke complete:item5}, suppose $T_Rxyz$ and $ySw$. Then $yRz$ and $ySw$, which imply that there is $u \in X$ such that $wRu$ and $zSu$. Therefore, $T_Rxwu$ and $zSu$.

\eqref{prop:corr L-frames and modal con frames:item3} is an immediate consequence of \eqref{prop:corr L-frames and modal con frames:item1} and \eqref{prop:corr L-frames and modal con frames:item2}. 
\end{proof}

The 1-1 correspondence of \cref{prop:corr L-frames and modal con frames}\eqref{prop:corr L-frames and modal con frames:item3} allows us to interpret formulas in the language of $\MSSIC$ in modal contact frames. Let $(X,R,S)$ be a modal contact frame. The Boolean connectives and $\Diamond$ are interpreted as in $\MSSIC$-frames, and for any valuation $v$ we have
\begin{align*}
x \vDash_v \phi \rightsquigarrow \psi \quad 
& \text{iff} \quad \forall y,z \in X \, (T_Rxyz \text{ and } y \vDash_v \phi \text{ imply } z \vDash_v \psi )\\
& \text{iff} \quad \forall y,z \in X \, (yRz \text{ and } y \vDash_v \phi \text{ imply } z \vDash_v \psi ).
\end{align*}
Since $x$ does not play any role in the condition on the right-hand side, we obtain that 
\[
x \vDash_v \phi \rightsquigarrow \psi \quad \text{iff} \quad R[v(\phi)] \subseteq v(\psi),
\]
which implies that
\[
v(\phi \rightsquigarrow \psi) = \begin{cases}
  X  & \text{ if } R[v(\phi)] \subseteq v(\psi), \\
  \emptyset & \text{ otherwise}.
\end{cases}
\]

The following corollary, stating the Kripke completeness of $\MSSIC$ with respect to modal contact frames, is an immediate consequence of \cref{cor: MSSIC complete simple frames} and \cref{prop:corr L-frames and modal con frames}.

\begin{corollary}\label{c:Kripke compl modal cont frames}
A formula $\phi$ is a theorem of $\MSSIC$ iff it is valid in all modal contact frames.
\end{corollary}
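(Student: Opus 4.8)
The plan is to derive \cref{c:Kripke compl modal cont frames} purely as a transport of completeness along the bijection established in \cref{prop:corr L-frames and modal con frames}, so that no new semantic arguments are needed. First I would recall from \cref{cor: MSSIC complete simple frames} that a formula $\phi$ is a theorem of $\MSSIC$ iff it is valid in all \emph{simple} $\MSSIC$-frames. Thus it suffices to show that, for every formula $\phi$, validity in all simple $\MSSIC$-frames is equivalent to validity in all modal contact frames. The natural route is to prove the pointwise statement: for a simple $\MSSIC$-frame $(X,T,S)$ and its associated modal contact frame $(X,R_T,S)$ (and conversely), the two frames validate exactly the same formulas.

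The key step is to verify that the correspondence preserves the interpretation of formulas. Fix a valuation $v$ on the common carrier $X$. Since the interpretation of the Boolean connectives and of $\Diamond$ depends only on $S$ and on the Boolean structure---which are untouched by the transformation---it is enough to check that the clause for $\rightsquigarrow$ agrees. By \cref{prop:corr L-frames and modal con frames}\eqref{prop:corr L-frames and modal con frames:item1}, a simple $\MSSIC$-frame satisfies $T = T_{R_T}$, so for every $x$,
\[
x \vDash_v \phi \rightsquigarrow \psi \iff \forall y,z\,(Txyz \text{ and } y \vDash_v \phi \Rightarrow z \vDash_v \psi) \iff \forall y,z\,(yR_Tz \text{ and } y \vDash_v \phi \Rightarrow z \vDash_v \psi),
\]
which is precisely the clause for $\rightsquigarrow$ in the modal contact frame $(X,R_T,S)$. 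A straightforward induction on the complexity of $\phi$ then gives that $v(\phi)$ computed in $(X,T,S)$ coincides with $v(\phi)$ computed in $(X,R_T,S)$; in particular $\phi$ is valid in one frame iff it is valid in the other. The same argument, using $R = R_{T_R}$ from \cref{prop:corr L-frames and modal con frames}\eqref{prop:corr L-frames and modal con frames:item2}, handles the reverse direction starting from a modal contact frame.

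Combining these observations finishes the proof. If $\phi$ is a theorem of $\MSSIC$, then by \cref{cor: MSSIC complete simple frames} it is valid in every simple $\MSSIC$-frame; given any modal contact frame $(X,R,S)$, the frame $(X,T_R,S)$ is a simple $\MSSIC$-frame by \cref{prop:corr L-frames and modal con frames}\eqref{prop:corr L-frames and modal con frames:item2}, and the equivalence of interpretations shows $\phi$ is valid in $(X,R,S)$. Conversely, if $\phi$ is valid in all modal contact frames, then for any simple $\MSSIC$-frame $(X,T,S)$ the associated $(X,R_T,S)$ is a modal contact frame by \cref{prop:corr L-frames and modal con frames}\eqref{prop:corr L-frames and modal con frames:item1}, so $\phi$ is valid there and hence in $(X,T,S)$; by \cref{cor: MSSIC complete simple frames}, $\phi$ is a theorem of $\MSSIC$.

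The only genuinely delicate point---and the one I would be careful to state explicitly rather than gloss over---is the inductive verification that the $\rightsquigarrow$-clauses match. This hinges entirely on the identities $T = T_{R_T}$ and $R = R_{T_R}$ from \cref{prop:corr L-frames and modal con frames}, which is exactly why the correspondence was set up to record those equalities. Everything else is bookkeeping, since the excerpt already computes the semantic clause for $\rightsquigarrow$ in a modal contact frame (namely $v(\phi \rightsquigarrow \psi) = X$ iff $R[v(\phi)] \subseteq v(\psi)$, and $\emptyset$ otherwise), so the result follows almost immediately once the frame correspondence is invoked.
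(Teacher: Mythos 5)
Your proposal is correct and follows essentially the same route as the paper: the paper derives the corollary as an immediate consequence of \cref{cor: MSSIC complete simple frames} and the correspondence in \cref{prop:corr L-frames and modal con frames}, with the matching of the $\rightsquigarrow$-clauses (via $T=T_{R_T}$ and $R=R_{T_R}$) handled in the text immediately preceding the corollary, where the semantics on modal contact frames is computed through $T_R$. Your explicit induction on formula complexity just spells out what the paper treats as definitional, so there is no substantive difference.
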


We now turn to morphisms between frames with the goal of describing the maps between modal contact frames that correspond to p-morphisms between $\MSSIC$-frames. Recall (see, e.g., \cite[Def.~2.12]{BRV01}) that a map $f \colon (X,T,S) \to (X',T',S')$ between two $\MSSIC$-frames is a \emph{p-morphism} if it satisfies the following conditions:
\begin{enumerate}[leftmargin=1.2cm]
\item[(MT1)] for all $x,y,z \in X$, if $Txyz$, then $T'f(x) f(y) f(z)$;
\item[(MT2)] for all $x \in X$ and $y',z' \in X'$, if $T' f(x) y' z'$, then there are $y,z \in X$ such that $Txyz$, $f(y)=y'$, and $f(z)=z'$;
\item[(MS1)] for all $x,y \in X$, if $xSy$, then $f(x) S' f(y)$;
\item[(MS2)] for all $x \in X$ and $y'\in X'$, if $f(x) S' y'$, then there is $y \in X$ such that $xSy$ and $f(y)=y'$.
\end{enumerate}

Our next goal is to describe the corresponding morphisms of modal contact frames.

\begin{definition}
We call a map $f \colon (X,R,S) \to (X',R',S')$ between modal contact frames a \emph{regular stable p-morphism} if it satisfies conditions (MS1), (MS2), and
\begin{enumerate}[leftmargin=1.2cm]
\item[(MR1)] for all $x,y \in X$, if $xRy$, then $f(x) R' f(y)$;
\item[(MR2)] for all $x',y' \in X'$, if $x'R'y'$, then there are $x,y \in X$ such that $xRy$, ${f(x)=x'}$, and $f(y)=y'$.
\end{enumerate}
\end{definition}

\begin{proposition}\label{prop:correspondence morphisms}
Let $(X,R,S)$ and $(X',R',S')$ be modal contact frames and $f \colon X \to X'$ a map. 
Then $f \colon (X,R,S) \to (X',R',S')$ is a regular stable p-morphism iff $f \colon (X,T_R,S) \to (X',T_{R'},S')$ is a p-morphism.
\end{proposition}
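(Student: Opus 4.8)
The plan is to observe first that the two notions of morphism share the conditions (S1) and (S2) \emph{verbatim}: both concern only the binary relations $S$ and $S'$, which are carried along unchanged by the correspondence of \cref{prop:corr L-frames and modal con frames}. Hence the entire content of the proposition reduces to showing that the pair (R1), (R2) is equivalent to the pair (T1), (T2), under the identifications $T_R xyz \iff yRz$ and $T_{R'} x'y'z' \iff y'R'z'$ that define the ternary relations from the binary ones.

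The feature I would exploit throughout is that the first coordinate of $T_R$ is inert: whether $T_R xyz$ holds depends only on $y$ and $z$, never on $x$. Concretely, $T_{R'} f(x)f(y)f(z)$ is equivalent to $f(y)R'f(z)$, and $T_{R'}f(x)y'z'$ is equivalent to $y'R'z'$. Unfolding (T1) through these equivalences turns it into the assertion that $yRz$ implies $f(y)R'f(z)$ for all $x,y,z$; since $x$ does not occur in the matrix, this is precisely (R1). I would present this direction as a direct, symmetric rewriting, obtaining (T1) $\iff$ (R1) with no side conditions to check.

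For the existence conditions, unfolding (T2) yields: for every $x \in X$ and all $y',z' \in X'$, if $y'R'z'$ then there are $y,z \in X$ with $yRz$, $f(y)=y'$, and $f(z)=z'$. After discarding the inert parameter $x$, this is exactly (R2). The implication (R2) $\Rightarrow$ (T2) is then immediate, since a witnessing pair $(y,z)$ produced by (R2) satisfies $T_R xyz$ regardless of $x$. The one point requiring a moment's care---and the closest thing to an obstacle in an otherwise routine argument---is the converse (T2) $\Rightarrow$ (R2): to invoke (T2) one must first supply some element $x \in X$ at which to instantiate the universal quantifier. Here I would use that $X \neq \emptyset$ by \cref{d:modal contact frame}, pick any $x \in X$, note that $T_{R'}f(x)y'z'$ holds because it records only $y'R'z'$, and read off the desired $y,z$ from (T2). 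Assembling (S1)--(S2) together with the equivalences (T1) $\iff$ (R1) and (T2) $\iff$ (R2) then delivers the claimed characterization.
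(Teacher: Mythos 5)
Your proof is correct and follows essentially the same route as the paper: unfold $T_R$ and $T_{R'}$ via $T_Rxyz \iff yRz$, exploit the inertness of the first coordinate to match (T1)/(T2) with (R1)/(R2), and invoke $X \neq \emptyset$ to instantiate the universal quantifier in the (T2) $\Rightarrow$ (R2) direction, exactly as the paper does. Your organization as condition-by-condition equivalences rather than direction-by-direction is a cosmetic difference only.
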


\begin{proof}
Suppose that $f \colon (X,R,S) \to (X',R',S')$ is a regular stable p-morphism. We need to show that $f$ satisfies the conditions (MT1) and (MT2) with respect to $T_R$ and $T_{R'}$.  To show that (MT1) holds, let $x,y,z \in X$ with $T_Rxyz$. Then $yRz$, and so $f(y)R'f(z)$ by (MR1). Thus, $T_{R'}f(x)f(y)f(z)$. We show (MT2) holds. Let $x \in X$ and $y',z' \in X'$ be such that $T_{R'}f(x)y'z'$. Then $y'R'z'$, and by (MR2) there are $y,z \in X$ such that $f(y)=y'$, $f(z)=z'$, and $yRz$, which implies $T_Rxyz$.

Assume now that $f \colon (X,T_R,S) \to (X',T_{R'},S')$ is a p-morphism. We show that $f$ satisfies the conditions (MR1) and (MR2). To prove (MR1), let $x,y \in X$ such that $xRy$. Then $T_Rxxy$, which implies $T_{R'}f(x)f(x)f(y)$ by (MT1). It follows that $f(x)R'f(y)$. To show (MR2), let $x',y' \in X'$. Since $X \neq \emptyset$, there is $w \in X$, and so $T_{R'}f(w)x'y'$. By (MT2), there exist $x,y \in X$ such that $f(x)=x'$, $f(y)=y'$, and $T_Rwxy$, which implies $xRy$.
\end{proof}

Since regular stable p-morphism between modal contact frames correspond to p-morphisms between $\MSSIC$-frames, they preserve and reflect truth of formulas. More precisely, if $f \colon (X,R,S) \to (X',R',S')$ is a regular stable p-morphism and $v$ a valuation on $X'$, then $w=f^{-1} \circ v$ is a valuation on $X$ such that
\begin{align*}
(X,R,S), \, x \vDash_w \phi  \quad & \text{iff} \quad (X',R',S'), \, f(x) \vDash_v \phi
\end{align*}
for any formula $\phi$.

To prove the admissibility of $\Pi_2$-rules in $\MSSIC$ we will employ the following lemma, which is an immediate consequence of \cref{c:Kripke compl modal cont frames}.

\begin{lemma}\label{l:admissibility countermodels}
Let $\rho$ be the $\Pi_2$-rule 
\[
\inference{F(\overline{\phi}/\overline{x},\overline{p}) \to \chi}{G(\overline{\phi}/\overline{x}) \to \chi},
\]
where $F(\overline{x},\overline{p}),G(\overline{x})$ are formulas in the language of $\MSSIC$. The rule $\rho$ is admissible in $\MSSIC$ iff for any $\overline{\phi},\chi$ formulas, modal contact frame $(X,R,S)$, and valuation $v$ on $X$ such that 
\[
(X,R,S) \nvDash_v G(\overline{\phi}/\overline{x}) \to \chi,
\] 
there exists a modal contact frame $(X',R',S')$ and a valuation $w$ on $X'$ such that
\[
(X',R',S') \nvDash_w F(\overline{\phi}/\overline{x},\overline{p}) \to \chi.
\]
\end{lemma}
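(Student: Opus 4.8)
The plan is to unwind the definition of admissibility and then apply the Kripke completeness of $\MSSIC$ with respect to modal contact frames (\cref{c:Kripke compl modal cont frames}), after which the claim reduces to a routine manipulation of quantifiers. Recall that $\rho$ is admissible in $\MSSIC$ precisely when, for every instance of the rule---that is, for all formulas $\underline{\phi}, \psi$ with the fresh variables $\underline{p}$ not occurring in $\underline{\phi}$ and $\psi$---provability of the premise forces provability of the conclusion:
\[
\vdash_{\MSSIC} F(\underline{\phi}/\underline{x},\underline{p}) \to \psi \quad \Longrightarrow \quad \vdash_{\MSSIC} G(\underline{\phi}/\underline{x}) \to \psi.
\]

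First I would replace each provability assertion by the corresponding semantic one. By \cref{c:Kripke compl modal cont frames}, a formula $\theta$ is a theorem of $\MSSIC$ iff it is valid in every modal contact frame. Applying this to both $\theta = F(\underline{\phi}/\underline{x},\underline{p}) \to \psi$ and $\theta = G(\underline{\phi}/\underline{x}) \to \psi$, the admissibility of $\rho$ becomes the statement that, for all $\underline{\phi}, \psi$, validity of $F(\underline{\phi}/\underline{x},\underline{p}) \to \psi$ in every modal contact frame entails validity of $G(\underline{\phi}/\underline{x}) \to \psi$ in every modal contact frame.

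Next I would pass to the contrapositive of this implication. Failure of validity of $G(\underline{\phi}/\underline{x}) \to \psi$ in every modal contact frame is exactly the existence of some modal contact frame $(X,R,S)$ and valuation $v$ with $(X,R,S) \nvDash_v G(\underline{\phi}/\underline{x}) \to \psi$, and likewise for $F$. Hence admissibility of $\rho$ is equivalent to: for all $\underline{\phi}, \psi$, if some $(X,R,S)$ and $v$ refute $G(\underline{\phi}/\underline{x}) \to \psi$, then some $(X',R',S')$ and $w$ refute $F(\underline{\phi}/\underline{x},\underline{p}) \to \psi$. Finally, pulling the existential quantifier over $(X,R,S)$ and $v$ out of the antecedent---legitimate since these do not occur in the consequent---turns this into the universally quantified form stated in the lemma, which concludes the argument.

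There is essentially no obstacle here: the statement is a direct unwinding once Kripke completeness is available, which is why it is flagged as an immediate consequence. The only points requiring care are that the freshness of $\underline{p}$ is part of the definition of the rule $\rho$ and is preserved throughout the substitutions, and that the passage between the two quantifier arrangements---the existential-in-the-antecedent form produced by the contrapositive and the universal ``for any $\dots$ such that'' form of the lemma---is the standard prenex manipulation.
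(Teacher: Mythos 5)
Your proof is correct and is exactly the argument the paper has in mind: the paper states this lemma as an immediate consequence of \cref{c:Kripke compl modal cont frames}, and your unwinding of admissibility, translation of provability into frame validity via that corollary, and passage to the contrapositive with the routine quantifier rearrangement is precisely that intended argument.
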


The following theorems show that all the $\Pi_2$-rules of $\UMSSIC$ (see \cref{def:MSSICU}) are admissible in $\MSSIC$.

\begin{theorem}\label{t:rho6 admissible}
The $\Pi_2$-rule
\begin{equation*}
\inference{(\phi\rightsquigarrow p)\land(p\rightsquigarrow\psi)\to\chi}{(\phi\rightsquigarrow\psi)\to\chi}\tag{$\rho$7}
\end{equation*}
is admissible in $\MSSIC$.
\end{theorem}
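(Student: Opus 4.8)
The plan is to use \cref{l:admissibility countermodels}, which reduces admissibility of ($\rho$6) to a purely semantic statement about modal contact frames. So I would start by assuming we have formulas $\phi, \psi, \chi$, a modal contact frame $(X,R,S)$, and a valuation $v$ such that $(X,R,S) \nvDash_v (\phi \rightsquigarrow \psi) \to \chi$. Unpacking the semantics, this means there is a point $x_0$ with $x_0 \vDash_v \phi \rightsquigarrow \psi$ and $x_0 \nvDash_v \chi$. By the frame semantics computed just before the lemma, $x_0 \vDash_v \phi \rightsquigarrow \psi$ forces $R[v(\phi)] \subseteq v(\psi)$, and since this condition is point-independent, $\phi \rightsquigarrow \psi$ is in fact valid everywhere, i.e.\ $v(\phi \rightsquigarrow \psi) = X$.

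The goal is then to build a new modal contact frame $(X',R',S')$ with a valuation $w$ refuting $(\phi \rightsquigarrow p) \land (p \rightsquigarrow \psi) \to \chi$, where $p$ is a fresh propositional letter not occurring in $\phi, \psi, \chi$. The natural idea is to take $(X',R',S')$ to be a \emph{bisimulation expansion} of $(X,R,S)$: split $R$ through an intermediate layer so that the single step witnessing $R[v(\phi)] \subseteq v(\psi)$ is factored as two steps, one into a set on which $p$ will be interpreted and one out of it. Concretely, I expect to take $X'$ to be (a copy of) $X$ together with a duplicated/auxiliary copy, define $R'$ so that the composite of the two halves of $R'$ recovers $R$, set $S'$ to mirror $S$ under the projection, and let $w$ agree with $v$ on the old variables via the projection $f\colon X' \to X$ while interpreting the fresh letter $p$ on the intermediate set $f^{-1}(v(\psi))$ (or an appropriately chosen ``middle'' region). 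One must check that $f$ is a regular stable p-morphism so that, by the preservation/reflection remark preceding the lemma, $w$ and $v$ agree on $\phi, \psi, \chi$ at corresponding points; this guarantees $\chi$ is still refuted at $f^{-1}(x_0)$ while $(\phi \rightsquigarrow p) \land (p \rightsquigarrow \psi)$ becomes valid because the interpolant $p$ splits the $R'$-step correctly.

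The verification splits into two parts. First, that $(X',R',S')$ is genuinely a modal contact frame: $R'$ reflexive and symmetric, and the compatibility square (condition \eqref{d:modal contact frame:item2}) between $R'$ and $S'$. Second, that $w$ makes $\phi \rightsquigarrow p$ and $p \rightsquigarrow \psi$ both valid, i.e.\ $R'[w(\phi)] \subseteq w(p)$ and $R'[w(p)] \subseteq w(\psi)$, while $\chi$ fails at the image of $x_0$. The reflexivity and symmetry of $R$ make the naive ``two-step factorization'' slightly delicate, since an interpolating construction must respect that $R$ already contains its own diagonal and is undirected; this is where the precise choice of $X'$, $R'$, and the region interpreting $p$ matters.

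The main obstacle I anticipate is designing the expansion so that all three constraints hold simultaneously: the p-morphism conditions (R1), (R2), (S1), (S2), the frame axioms for $(X',R',S')$ (especially the $R'$--$S'$ square, which involves $S'$ and therefore the modal operator), and the two containments $R'[w(\phi)] \subseteq w(p) \subseteq$--then--$R'[w(p)] \subseteq w(\psi)$ giving validity of $\phi \rightsquigarrow p$ and $p \rightsquigarrow \psi$. Since ($\rho$6) is the purely strict-implication rule not involving $\Box$, I expect the genuinely modal interaction (the $S'$-square) to go through routinely provided $S'$ is pulled back along the projection, and the real care to lie in defining $R'$ and the valuation of $p$ so that the factorization of the $R$-step is exact. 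I would handle this by choosing the interpolating region to be essentially $v(\psi)$ (the smallest ``safe'' zone reached from $v(\phi)$), lifted to $X'$, which makes $R'[w(\phi)] \subseteq w(p)$ immediate from $R[v(\phi)] \subseteq v(\psi)$ and $R'[w(p)] \subseteq w(\psi)$ follow from reflexivity together with the chosen definition of $R'$.
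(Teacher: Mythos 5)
Your overall strategy---reducing to \cref{l:admissibility countermodels}, building an expansion $(X',R',S')$ with a projection $f\colon X'\to X$ that is a regular stable p-morphism, pulling $S$ and the old variables back along $f$, and interpreting the fresh letter $p$ on a suitable region---is the paper's strategy in outline. But the concrete construction you commit to does not work. Your interpolating region $w(p)=f^{-1}[v(\psi)]$ is not the ``smallest safe zone reached from $v(\phi)$''; it is the \emph{largest} candidate, and it is too large. Validity of $p\rightsquigarrow\psi$ requires $R'[w(p)]\subseteq w(\psi)$, i.e.\ that $f^{-1}[v(\psi)]$ be closed under $R'$. But condition (R2) forces \emph{every} $R$-edge of $X$ to lift to an $R'$-edge of $X'$; so if the original frame has any edge $xRy$ with $x\in v(\psi)$ and $y\notin v(\psi)$---which is perfectly compatible with $R[v(\phi)]\subseteq v(\psi)$, e.g.\ take $X=\{a,b\}$ with $R$ total, $S=\emptyset$, $\phi=\bot$, $\chi=\bot$, and $\psi$ a variable with $v(\psi)=\{a\}$---then some lift of that edge exits $w(p)$, so $p\rightsquigarrow\psi$ fails \emph{globally} (recall $\rightsquigarrow$-formulas are all-or-nothing), the antecedent of the premise is everywhere false, and your frame fails to refute $(\phi\rightsquigarrow p)\land(p\rightsquigarrow\psi)\to\chi$ no matter how $R'$ is chosen. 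Your fallback, factoring each $R$-step into two $R'$-steps through a middle layer, buys nothing: by (R1) every single $R'$-edge must already project to a full $R$-edge, so there is no genuinely intermediate layer to place $p$ on.

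The missing idea is to take $w(p)=R'[w(\phi)]$ (the genuinely smallest set validating $\phi\rightsquigarrow p$) and to engineer $R'$ so that this set is $R'$-closed. The paper does this by \emph{unfolding} edges rather than factoring them: $X'$ is the set of pairs $(x_1,x_2)$ with $x_1Rx_2$, $f(x_1,x_2)=x_1$, $S'$ acts componentwise, and $(x_1,x_2)R'(y_1,y_2)$ iff $\{x_1,x_2\}=\{y_1,y_2\}$, so each point's only $R'$-neighbours are itself and its transpose. Then $w(p)=R'[w(\phi)]=\{(x_1,x_2): x_1\vDash_v\phi \text{ or } x_2\vDash_v\phi\}$ is invariant under transposition, whence $R'[w(p)]=w(p)$; and $w(p)\subseteq w(\psi)$ follows from $R[v(\phi)]\subseteq v(\psi)$ together with reflexivity and symmetry of $R$ (if $x_2\vDash_v\phi$ then $x_2Rx_1$ gives $x_1\in R[v(\phi)]\subseteq v(\psi)$). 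With $a'=(a,a)$ for a point $a$ refuting the conclusion, all three requirements hold and \cref{l:admissibility countermodels} applies. In short: you have the right reduction and the right kind of object to build, but no mechanism forcing $R'$-closure of the region interpreting $p$, and with the region you actually propose no such mechanism can exist.
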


\begin{proof}
Let $(X,R,S)$ be a modal contact frame and $v$ a valuation on $X$ such that $(X,R,S) \nvDash_v (\phi\rightsquigarrow\psi)\to\chi$. We define a new modal contact frame $(X',R',S')$. Set $X'= \{ (x_1,x_2) \in X \times X : x_1Rx_2 \}$. The binary relation $R'$ is given by $(x_1,x_2)R'(y_1,y_2)$ iff $\{ x_1,x_2 \} = \{ y_1,y_2 \}$. Define $S'$ by $(x_1,x_2)S'(y_1,y_2)$ iff $x_1Sy_1$ and $x_2Sy_2$. We show that $(X',R',S')$ is a modal contact frame. It is an immediate consequence of the definition of $R'$ that $R'$ is reflexive and symmetric. It remains to show that condition \eqref{d:modal contact frame:item2} of \cref{d:modal contact frame} holds. Suppose that $(x_1,x_2)R'(y_1,y_2)$ and $(x_1,x_2)S'(z_1,z_2)$. If $(x_1,x_2)=(y_1,y_2)$, then there is nothing to prove. If $(x_1,x_2) \neq (y_1,y_2)$, then $x_1 \neq x_2$ and the definition of $R'$ implies that $y_1=x_2$ and $y_2=x_1$. Thus, we have $(z_1,z_2)R'(z_2,z_1)$ and $(y_1,y_2)=(x_2,x_1) S' (z_2,z_1)$, where the last relation holds because $(x_1,x_2)S'(z_1,z_2)$. Consequently, $(X',R',S')$ is a modal contact frame.

Let $f \colon X' \to X$ be defined by $f(x_1,x_2)=x_1$. We show that $f$ is a regular stable p-morphism. 
If $(x_1,x_2)S'(y_1,y_2)$, then it follows from the definition of $S'$ that $f(x_1,x_2)=x_1Sy_1=f(y_1,y_2)$. Suppose that $f(x_1,x_2)Sy_1$, then $x_1Rx_2$ and $x_1Sy_1$. Since $(X,R,S)$ is a modal contact frame, we have that there exists $y_2 \in X$ such that $y_1Ry_2$ and $x_2Sy_2$. Thus, $(x_1,x_2)S'(y_1,y_2)$ and $f(y_1,y_2)=y_1$. Therefore, $f$ satisfies (MS1) and (MS2).
If $(x_1,x_2)R'(y_1,y_2)$, then $x_1Rx_2$ and $y_1 \in \{x_1,x_2\}$, and hence $f(x_1,x_2)=x_1Ry_1=f(y_1,y_2)$. If $x,y \in X$ with $xRy$, then $(x,y)R'(y,x)$, $f(x,y)=x$, and $f(y,x)=y$. Thus, $f$ satisfies (MR1) and (MR2).

Let $w$ be the valuation on $X'$ given by $w(q)=f^{-1}[v(q)]$ for any propositional variable $q$ distinct from $p$ and $w(p) = \{(x_1,x_2) : x_1 \vDash_v \phi \text{ or } x_2 \vDash_v \phi \}$. It follows from the definitions of $R'$ and $w$ that $w(p)=R'[f^{-1}[v(\phi)]]=R'[w(\phi)]$.
Since $(X,R,S) \nvDash_v (\phi\rightsquigarrow\psi)\to\chi$, there exists $a \in X$ such that $a \vDash_v \phi\rightsquigarrow\psi$ but $a \nvDash_v \chi$. That $a \vDash_v \phi\rightsquigarrow\psi$ simply means $R[v(\phi)] \subseteq v(\psi)$. Let $a'=(a,a)$. We have that $a' \in X'$ because $R$ is reflexive. We show that $a' \nvDash_w (\phi\rightsquigarrow p)\land(p\rightsquigarrow\psi)\to\chi$. This requires to show that $a' \vDash_w \phi\rightsquigarrow p$, $a' \vDash_w p\rightsquigarrow\psi$, and $a' \nvDash \chi$. Thus, we need to prove that $R'[w(\phi)] \subseteq w(p)$, $R'[w(p)] \subseteq w(\psi)$, and $a' \nvDash \chi$. Since, as we observed above, $w(p)=R'[w(\phi)]$, the first inclusion holds. To prove the second inclusion, it is sufficient to show that $w(p) \subseteq w(\psi)$ because the definitions of $R'$ and $w$ imply that $R'[w(p)]= w(p)$. Assume that $(x_1,x_2) \in w(p)$. If $x_1 \in v(\phi)$, then $x_1 \in R[v(\phi)] \subseteq v(\psi)$. Otherwise, $x_2 \in v(\phi)$ and $x_1 \in R[v(\phi)] \subseteq v(\psi)$ because $x_1Rx_2$ and $R$ is symmetric. In either case, $(x_1,x_2) \in f^{-1}[v(\psi)]=w(\psi)$. This proves that $w(p) \subseteq w(\psi)$. Since $f(a')=a \nvDash_v \chi$, we have $a' \nvDash_w \chi$. Consequently, $a' \nvDash_w (\phi\rightsquigarrow p)\land(p\rightsquigarrow\psi)\to\chi$, which implies $(X',R',S') \nvDash_w (\phi\rightsquigarrow p)\land(p\rightsquigarrow\psi)\to\chi$. Therefore, ($\rho$7) is admissible in $\MSSIC$ by \cref{l:admissibility countermodels}.
\end{proof} 

\begin{theorem}\label{t:rho7 admissible}
The $\Pi_2$-rule
\begin{equation*}
\inference{p \land (p\rightsquigarrow\phi)\to\chi}{\phi\to\chi}\tag{$\rho$8}
\end{equation*}
is admissible in $\MSSIC$.
\end{theorem}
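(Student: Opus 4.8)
The plan is to follow the strategy of \cref{t:rho6 admissible} and reduce the problem to a countermodel construction via \cref{l:admissibility countermodels}. Accordingly, I start from an arbitrary modal contact frame $(X,R,S)$ and a valuation $v$ with $(X,R,S) \nvDash_v \phi \to \psi$, fix a point $a \in X$ with $a \vDash_v \phi$ and $a \nvDash_v \psi$, and aim to produce a modal contact frame $(X',R',S')$ together with a valuation $w$ refuting the premise $(p \rightsquigarrow \phi) \wedge p \to \psi$. Since $R'[w(p)] \subseteq w(\phi)$ is exactly the semantic meaning of $p \rightsquigarrow \phi$ (the point of evaluation being irrelevant), what I need is a point $a'$ of $X'$ lying in $w(p)$, failing $\psi$, and whose entire $R'$-neighborhood is contained in $w(\phi)$ --- that is, a witness that is ``well inside'' $\phi$.

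The construction I propose is to clone $a$ into a fresh point that is $R'$-isolated. Concretely, set $X' = X \sqcup \{a'\}$, let $R' = R \cup \{(a',a')\}$, and let $S' = S \cup \{(a',y) : y \in X,\ aSy\}$, so that $a'$ receives a reflexive $R'$-loop and copies the outgoing $S$-edges of $a$ but has no other incident edges. I define $f \colon X' \to X$ to be the identity on $X$ with $f(a') = a$, and take $w(x) = f^{-1}[v(x)]$ for every propositional letter $x \neq p$, while setting $w(p) = \{a'\}$. The verification then splits into three routine parts: that $(X',R',S')$ satisfies \cref{d:modal contact frame} (reflexivity and symmetry of $R'$ are immediate, and condition \eqref{d:modal contact frame:item2} is checked by cases, the only new case being $x' = y' = a'$, where reflexivity of $R$ lets one take the witness $w' = z'$); that $f$ is a regular stable p-morphism (conditions (R1), (R2), (S1), (S2) all reduce to properties of $R$ and $S$ on $X$, with (R2) already ensured by the embedded copy of $X$); and that, since $p$ does not occur in $\phi$ or $\psi$, the p-morphism preservation property gives $w(\phi) = f^{-1}[v(\phi)]$ and $w(\psi) = f^{-1}[v(\psi)]$.

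Granting these, the point $a'$ refutes the premise: $a' \in w(p)$ gives $a' \vDash_w p$; because $a'$ is $R'$-related only to itself and $f(a') = a \in v(\phi)$, we get $R'[w(p)] = \{a'\} \subseteq w(\phi)$, hence $a' \vDash_w p \rightsquigarrow \phi$; and $f(a') = a \notin v(\psi)$ gives $a' \nvDash_w \psi$. Thus $(X',R',S') \nvDash_w (p \rightsquigarrow \phi) \wedge p \to \psi$, and \cref{l:admissibility countermodels} yields admissibility. The crux of the argument --- and the only genuinely nonobvious step --- is recognizing that the ``well inside $\phi$'' witness required by the rule cannot in general be found inside $X$ (the original frame need not satisfy any density property), and that it can nonetheless be manufactured by adjoining a single reflexive point whose $R'$-neighborhood is forced to be a singleton mapping into $v(\phi)$; the remaining work is merely checking that this adjunction respects both the modal contact frame axioms and the p-morphism conditions.
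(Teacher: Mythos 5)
Your proof is correct, and it follows the same overall strategy as the paper: reduce to \cref{l:admissibility countermodels}, then build a modal contact frame mapping onto $(X,R,S)$ by a regular stable p-morphism, with $w(p)$ consisting of fresh $R'$-reflexive but otherwise $R'$-isolated points lying above $v(\phi)$. The difference is in the construction itself: the paper takes $X'$ to be the disjoint union of \emph{two} full copies of $X$, replacing $R$ by the identity on the first copy and setting $w(p) = \{(1,x) : x \vDash_v \phi\}$, whereas you adjoin a single clone $a'$ of the refuting point and set $w(p)=\{a'\}$. Your version is leaner, and all your checks (the modal contact frame axioms, (R1)--(R2) and (S1)--(S2), and the refutation at $a'$) go through as stated; in particular the only new instance of \cref{d:modal contact frame}\eqref{d:modal contact frame:item2} is indeed handled by taking the witness to be $z$ itself, using reflexivity of $R$. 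What the paper's more generous construction buys is reusability: the identical frame $(X',R',S')$, map $f$, and valuation $w$ are reused verbatim in \cref{t:uc admissible,thm:LC admissible} for the rules (UC) and (LC), where the witness must additionally satisfy $\Box p$ (resp.\ $\Diamond p$). That requires the $S'$-successors of the witness to stay inside $w(p)$, which holds in the paper's first copy but fails in your construction, since all $S'$-edges out of $a'$ point into the original copy $X$, which is disjoint from $w(p)=\{a'\}$. So your construction settles ($\rho$7), but it would need modification (e.g.\ cloning the $S$-successors as well, or doubling the whole space as the paper does) to yield the admissibility of the other $\Pi_2$-rules.
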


\begin{proof}
Let $(X,R,S)$ be a modal contact frame and $v$ a valuation on $X$ such that $(X,R,S) \nvDash_v \phi\to\chi$. We define a new modal contact frame $(X',R',S')$. Set $X'=\{1,2\} \times X$. The binary relation $R'$ is given by $(i,x) R' (j,y)$ iff either $i=j=1$ and $x=y$ or $i=j=2$ and $xRy$. The binary relation $S'$ is defined by $(i,x) S' (j,y)$ iff $i=j$ and $xSy$.
Thus, $X'$ is obtained by taking the disjoint union of two copies of $(X,R,S)$ and replacing $R$ with the identity relation in the first copy.
We show $(X',R',S')$ is a modal contact frame. It is immediate to see that $R'$ is reflexive and symmetric. 
It remains to show that condition \eqref{d:modal contact frame:item2} of \cref{d:modal contact frame} holds.
Let $(i,x), (j,y), (h,z) \in X'$ such that $(i,x) R' (j,y)$ and $(i,x) S' (h,z)$. By the definitions of $R'$ and $S'$ we have $i=j=h$. If $i=j=h=1$, then $x=y$ and $xSz$. So, in this case $(1,z) R' (1,z)$ and $(1,x) S' (1,z)$. Otherwise, if $i=j=h=2$, then $xRy$ and $xSz$. Since $(X,R,S)$ is a modal contact frame, there exists $u \in X$ such that $zRu$ and $ySu$. Then, $(2,z) R' (2,u)$ and $(2,y) S' (2,u)$. This proves that $(X',R',S')$ is a modal contact frame.

Let $f \colon X' \to X$ be defined by $f(i,x)=x$. We show that $f$ is a regular stable p-morphism. If $(i,x) S' (j,y)$, then $xSy$, so $f(i,x) S f(j,y)$. If $x=f(i,x) S y$, then $(i,x) S (i,y)$ and $f(i,y)=y$. Thus, $f$ satisfies (MS1) and (MS2). If $(i,x) R' (j,y)$, then either $x=y$ or $xRy$. In both cases we have $xRy$, and so $f(i,x) R f(j,y)$. Finally, suppose $xRy$. Then $(2,x) R' (2,y)$, $f(2,x)=x$, and $f(2,y)=y$. Thus, $f$ satisfies (MR1) and (MR2).

Let $w$ be the valuation on $X'$ given by $w(q)=f^{-1}[v(q)]$ for any propositional variable $q$ distinct from $p$ and $w(p) = \{(1,x) : x \vDash_v \phi\}$.
Since $(X,R,S) \nvDash_v \phi\to\chi$, there exists $a \in X$ such that $a \vDash_v \phi$ but $a \nvDash_v \chi$. 
Let $a' = (1,a) \in X'$.
We prove that $a' \nvDash_w p \land (p\rightsquigarrow\phi)\to\chi$. This requires to show that $a' \vDash_w p$, $a' \vDash_w p\rightsquigarrow\phi$, and $a' \nvDash_w \chi$. Since $a \vDash_v \phi$ and $a'=(1,a)$, the definition of $w(p)$ yields that  $a' \vDash_w p$. We have that $f^{-1}[v(\phi)] = w(\phi)$ because $f$ is a regular stable p-morphism and $\phi$ does not contain $p$. Therefore,
\begin{align*}
R'[w(p)] &= w(p) = \{(1,x) : x \vDash_v \phi\}\\
&\subseteq \{(i,x) : x \vDash_v \phi \text{ and } i\in \{1,2\}\} = f^{-1}[v(\phi)] = w(\phi),
\end{align*}
which implies that $a' \in X' = w(p \rightsquigarrow \phi)$. Finally, $a' \nvDash_w \chi$ because $w(\chi)=f^{-1}[v(\chi)]$ and $f(a')=a \nvDash_v \chi$.
Consequently, $a' \nvDash_w p \land (p\rightsquigarrow\phi)\to\chi$, which implies $(X',R',S') \nvDash_w p \land (p\rightsquigarrow\phi)\to\chi$. Therefore, ($\rho$8) is admissible in $\MSSIC$ by \cref{l:admissibility countermodels}.
\end{proof}

\begin{theorem}\label{t:uc admissible}
The $\Pi_2$-rule
\begin{equation*}
\inference{\Box p \land (p\rightsquigarrow\phi)\to\chi}{\Box\phi\to\chi}\tag{UC}
\end{equation*}
is admissible in $\MSSIC$.
\end{theorem}

\begin{proof}
Let $(X,R,S)$ be a modal contact frame and $v$ a valuation on $X$ such that $(X,R,S) \nvDash_v \Box \phi \to \chi$.
Let $(X',R',S')$, $f \colon X' \to X$, and $w$ be defined as in the proof of \cref{t:rho7 admissible}.

Since $(X,R,S) \nvDash_v \Box \phi \to \chi$, there exists $a \in X$ such that $a \vDash_v \Box \phi$ but $a \nvDash_v \chi$. 
Let $a' = (1,a) \in X'$. We prove that  $a' \nvDash_w \Box p \land (p\rightsquigarrow\phi) \to \chi$. This requires to show that $a' \vDash_w \Box p$, $a' \vDash_w p\rightsquigarrow\phi$, and $a' \nvDash_w \chi$.
The proofs that $a' \vDash_w p\rightsquigarrow\phi$ and $a' \nvDash_w \chi$ are the same as in the proof of \cref{t:rho7 admissible}.
Since $a \vDash_v \Box \phi$, we have 
\[
S'[a']=\{ (1,x) : aSx \} \subseteq \{ (1,x) : x \vDash_v \phi \} = w(p),
\]
and hence $a'=(1,a) \vDash_w \Box p$. Consequently, $a' \nvDash_w \Box p \land (p\rightsquigarrow\phi) \to \chi$, which implies $(X',R',S') \nvDash_w \Box p \land (p\rightsquigarrow\phi) \to \chi$. Therefore, (UC) is admissible in $\MSSIC$ by \cref{l:admissibility countermodels}.
\end{proof}

As an immediate consequence of the definition of $\UMSSIC$ and \cref{t:rho6 admissible,t:rho7 admissible,t:uc admissible}, we obtain:

\begin{theorem}\label{thm:MSSICU=MSSIC}
$\UMSSIC$ coincides with $\MSSIC$.
\end{theorem}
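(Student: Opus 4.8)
The plan is to establish that $\MSSIC$ and $\MSSICU$ have exactly the same theorems. One inclusion is immediate from \cref{def:MSSICU}: since $\MSSICU$ is obtained from $\MSSIC$ purely by adjoining the inference rules ($\rho$6), ($\rho$7), and (UC), every $\MSSIC$-theorem is a fortiori an $\MSSICU$-theorem. All the work is in the converse inclusion, namely that the three added $\Pi_2$-rules produce no new theorems, and this is precisely the content of \cref{t:rho6 admissible,t:rho7 admissible,t:uc admissible}. Reading those admissibility statements through \cref{l:admissibility countermodels} together with the Kripke completeness of \cref{c:Kripke compl modal cont frames}, to say that one of these rules is admissible in $\MSSIC$ is exactly to say that whenever its premise is an $\MSSIC$-theorem, its conclusion is an $\MSSIC$-theorem as well.

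First I would fix an $\MSSICU$-proof $\psi_1, \dots, \psi_n$ of a formula $\phi$ and induct on the number $k$ of steps justified by one of the three $\Pi_2$-rules. If $k = 0$, the sequence uses only $\MSSIC$-axioms and the rules (MP), (R), (N), hence is already an $\MSSIC$-proof and $\vdash_{\MSSIC} \phi$. If $k > 0$, I would single out the earliest step $\psi_i$ obtained by a $\Pi_2$-rule $\rho$, say $\psi_i = G(\overline{\phi}) \to \chi$ inferred from an earlier $\psi_j = F(\overline{\phi}, \overline{p}) \to \chi$ with $\overline{p}$ not occurring in $\overline{\phi}$ and $\chi$. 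Since $\psi_i$ is the first $\Pi_2$-application, the prefix $\psi_1, \dots, \psi_j$ involves no $\Pi_2$-rule and therefore already witnesses $\vdash_{\MSSIC} \psi_j$. The admissibility theorem for $\rho$ then yields $\vdash_{\MSSIC} \psi_i$, so I can replace the single $\Pi_2$-step deriving $\psi_i$ by an $\MSSIC$-derivation of $\psi_i$, producing an $\MSSICU$-proof of $\phi$ with only $k-1$ applications of $\Pi_2$-rules. The induction hypothesis then gives $\vdash_{\MSSIC} \phi$.

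The one point needing care---the main, though mild, obstacle---is the bookkeeping. One must check that the premise of the \emph{earliest} $\Pi_2$-step is a genuine $\MSSIC$-theorem (which is exactly why the earliest such step is singled out, ensuring that no previous $\Pi_2$-rule has been used), that the fresh-variable side condition on $\overline{p}$ is respected so the matching admissibility theorem applies verbatim, and that splicing a from-scratch $\MSSIC$-derivation of $\psi_i$ into the proof leaves every later reference intact after reindexing. Granting this, $\vdash_{\MSSICU} \phi \iff \vdash_{\MSSIC} \phi$ for every formula $\phi$, so the two calculi coincide at the level of theorems; since the consequence relation of each system is recovered from its theorems via the universal modality $\univ$ (which is a universal modality for $\MSSIC$ by \cref{prop:MSSIC univ modality} and for $\MSSICU$ by construction), this coincidence lifts to the full deductive systems.
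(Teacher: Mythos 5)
Your proposal is correct and follows the same route as the paper: the paper derives \cref{thm:MSSICU=MSSIC} as an immediate consequence of \cref{t:rho6 admissible,t:rho7 admissible,t:uc admissible} together with the definition of $\MSSICU$, and your argument simply makes explicit the standard induction on the number of $\Pi_2$-rule applications (replacing the earliest such step by an $\MSSIC$-derivation of its conclusion) that the paper leaves implicit in the word ``immediate.'' The bookkeeping points you flag---locality of the fresh-variable side condition and splicing---are exactly the routine details being suppressed, so nothing is missing.
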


Let $\AMDeV$\label{AMDeV} denote the class of finitely additive modal de Vries algebras. We have the following completeness results.

\begin{corollary}\label{cor:MSSIC strongly complete MDVA}
For a set of formulas $\Gamma$ and a formula $\phi$, we have
\[
\Gamma\vdash_{\MSSIC}\phi \iff \Gamma\vDash_{\AMDeV}\phi \iff \Gamma\vDash_{\MKHaus}\phi.
\]
\end{corollary}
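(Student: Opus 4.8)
The plan is to reduce the whole statement to results already established for $\MSSICU$, using that $\MSSIC$ and $\MSSICU$ are literally the same deductive system by \cref{thm:MSSICU=MSSIC}. Since the two calculi coincide, we have $\Gamma\vdash_{\MSSIC}\phi$ iff $\Gamma\vdash_{\MSSICU}\phi$ for every $\Gamma$ and $\phi$. The equivalence with $\vDash_{\mathsf{MKHaus}}$ is then immediate: it is exactly \cref{cor:MSSICU completeness MKHaus} read through this identification. So the only genuine work is to establish the middle equivalence $\Gamma\vdash_{\MSSIC}\phi \iff \Gamma\vDash_{\mathsf{MDV}_a}\phi$.

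For the middle equivalence I would argue by a cycle of implications built on the class inclusions
\[
\umdv \subseteq \mathsf{MDV}_a \subseteq \mathsf{MCon}_a.
\]
The first inclusion holds because an upper continuous de Vries additive operator is finitely additive by \cref{prop:upper cont implies finitely additive}, so every upper continuous modal de Vries algebra is in particular a finitely additive modal de Vries algebra; the second is trivial, since a de Vries algebra is a contact algebra. The cycle then runs: $\Gamma\vdash_{\MSSIC}\phi$ implies $\Gamma\vDash_{\mathsf{MDV}_a}\phi$ (soundness), which implies $\Gamma\vDash_{\umdv}\phi$ (because $\umdv\subseteq\mathsf{MDV}_a$), which implies $\Gamma\vdash_{\MSSICU}\phi$ by \cref{thm:completeness upper modal de vries}, which in turn equals $\Gamma\vdash_{\MSSIC}\phi$ by \cref{thm:MSSICU=MSSIC}.

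The one step that needs checking is the soundness implication $\Gamma\vdash_{\MSSIC}\phi\Rightarrow\Gamma\vDash_{\mathsf{MDV}_a}\phi$. Here I would unfold the consequence relation: $\Gamma\vdash_{\MSSIC}\phi$ yields $\gamma_1,\dots,\gamma_n\in\Gamma$ with $\vdash_{\MSSIC}\univ(\gamma_1\wedge\dots\wedge\gamma_n)\to\phi$. Since $\mathsf{MDV}_a\subseteq\mathsf{MCon}_a$ and $\MSSIC$ is sound with respect to $\mathsf{MCon}_a$ by \cref{prop:simple MSSIC}, this formula is valid in every $\mathbf{B}\in\mathsf{MDV}_a$. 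Given a valuation $v$ on such a $\mathbf{B}$ with $v(\gamma)=1$ for all $\gamma\in\Gamma$, we get $v(\gamma_1\wedge\dots\wedge\gamma_n)=1$, hence $v(\univ(\gamma_1\wedge\dots\wedge\gamma_n))=1$ because $\univ$ is the universal modality and $\univ 1=1$, and therefore $v(\phi)=1$ by validity of the implication. This establishes $\Gamma\vDash_{\mathsf{MDV}_a}\phi$.

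I expect no serious obstacle: the corollary is essentially a bookkeeping consequence of \cref{thm:MSSICU=MSSIC,cor:MSSICU completeness MKHaus,thm:completeness upper modal de vries} together with the inclusions among the algebra classes. The only point requiring a moment of care is confirming that $\mathsf{MDV}_a$ genuinely sits between $\umdv$ and $\mathsf{MCon}_a$, so that completeness can be imported from the statement for $\umdv$ while soundness is imported from the statement for $\mathsf{MCon}_a$.
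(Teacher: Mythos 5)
Your proposal is correct and follows essentially the same route as the paper: the $\mathsf{MKHaus}$ equivalence comes from \cref{cor:MSSICU completeness MKHaus} via \cref{thm:MSSICU=MSSIC}, completeness with respect to $\mathsf{MDV}_a$ is imported from \cref{thm:completeness upper modal de vries} through the inclusion $\umdv\subseteq\mathsf{MDV}_a$, and soundness closes the cycle. The only (harmless) difference is that the paper justifies soundness by directly observing that the axioms of $\MSSIC$ are valid in finitely additive modal de Vries algebras, whereas you import it from \cref{prop:simple MSSIC} via the inclusion $\mathsf{MDV}_a\subseteq\mathsf{MCon}_a$.
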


\begin{proof}
We first show that $\Gamma\vdash_{\MSSIC}\phi$ iff $\Gamma\vDash_{\AMDeV}\phi$. Since all the axioms of $\MSSIC$ are valid in any finitely additive modal de Vries algebra, the left-to-right implication is a straightforward verification. The right-to-left implication holds because $\UMSSIC$ is strongly complete with respect to $\UMDeV$ (see \cref{thm:completeness upper modal de vries}), which is a subclass of $\AMDeV$. As an immediate consequence of \cref{cor:MSSICU completeness MKHaus,thm:MSSICU=MSSIC}, we have that $\Gamma\vdash_{\MSSIC}\phi$ iff $\Gamma\vDash_{\MKHaus}\phi$.
\end{proof}

Let ($\rho$9) be the $\Pi_2$-rule
\begin{equation*}
\inference{(p\rightsquigarrow p) \land (\phi\rightsquigarrow p)\land (p\rightsquigarrow \psi)\to\chi}{(\phi\rightsquigarrow\psi)\to\chi}.
\end{equation*}
It is shown in \cite[Lemma~6.14]{BeBeSaVe19} that $\Pi(\rho 9)$ holds in a contact algebra $(B, \prec)$ iff the following condition holds
\begin{itemize}
\item[(S9)] $a \prec b$ implies there is $c$ with $c\prec c$ and $a \prec c \prec b$.
\end{itemize}

It is proved in \cite[Lemma~4.11]{Be10} that a de Vries algebra satisfies (S9) iff its dual space is zero-dimensional, and hence a Stone space. For this reason, the de Vries algebras satisfying (S9) are called \emph{zero-dimensional} in \cite{Be10}. We will also call any contact algebra satisfying (S9) zero-dimensional.

\begin{theorem}\label{thm:rho9 admissible}
The $\Pi_2$-rule \emph{($\rho$9)}
is admissible in $\MSSIC$.
\end{theorem}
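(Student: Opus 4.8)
The plan is to follow exactly the template established by the proofs of \cref{t:rho6 admissible,t:rho7 admissible,t:uc admissible}: by \cref{l:admissibility countermodels}, it suffices to start from a modal contact frame $(X,R,S)$ and valuation $v$ with $(X,R,S) \nvDash_v (\phi\rightsquigarrow\psi)\to\chi$, and to manufacture a new modal contact frame $(X',R',S')$ together with a valuation $w$ and a point $a'$ witnessing the failure of the premise $(\phi\rightsquigarrow p)\land(p\rightsquigarrow\psi)\land(p\rightsquigarrow p)\to\chi$. Comparing ($\rho 9$) with ($\rho 6$), the only new conjunct in the premise is $(p\rightsquigarrow p)$, which semantically demands $R'[w(p)]\subseteq w(p)$, i.e.\ $w(p)$ must be an $R'$-closed set (a ``zerodimensional'' witness, matching condition (S9)). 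So I would reuse the construction from the proof of \cref{t:rho6 admissible} as closely as possible and only adjust the definition of $R'$ (or of $w(p)$) so that $w(p)$ becomes $R'$-closed.

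Concretely, first I would recall the ($\rho 6$) construction: $X'=\{(x_1,x_2):x_1Rx_2\}$, and I would then \emph{modify} it so that the new relation $R'$ only ever relates pairs sharing the \emph{same} first coordinate data needed to keep $w(p)$ closed. The cleanest route is probably to set $w(p)=\{(x_1,x_2): x_1\vDash_v\phi \text{ or } x_2\vDash_v\phi\}$ exactly as before, verify as before that $w(p)=R'[w(\phi)]$ so that $a'\vDash_w \phi\rightsquigarrow p$ holds, and that $w(p)\subseteq w(\psi)$ so that $a'\vDash_w p\rightsquigarrow\psi$ holds. The point is that in the ($\rho 6$) construction the relation $R'$ given by $(x_1,x_2)R'(y_1,y_2)$ iff $\{x_1,x_2\}=\{y_1,y_2\}$ already has the feature that $R'[w(p)]=w(p)$: since membership of $(x_1,x_2)$ in $w(p)$ depends only on the \emph{unordered} pair $\{x_1,x_2\}$, and $R'$ relates $(x_1,x_2)$ only to $(x_2,x_1)$ (and itself), the set $w(p)$ is automatically $R'$-closed. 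Hence I expect the identical frame, morphism $f(x_1,x_2)=x_1$, and valuation $w$ from the proof of \cref{t:rho6 admissible} to work verbatim, with the single additional observation that $R'[w(p)]=w(p)\subseteq w(p)$, giving $a'\vDash_w p\rightsquigarrow p$.

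So the ordered steps are: (i) invoke \cref{l:admissibility countermodels} and fix the countermodel $(X,R,S),v,a$ with $a\vDash_v\phi\rightsquigarrow\psi$ and $a\nvDash_v\chi$; (ii) reproduce the frame $(X',R',S')$, the regular stable p-morphism $f$, and the valuation $w$ from \cref{t:rho6 admissible}, reusing the verification that $(X',R',S')$ is a modal contact frame and that $f$ is a regular stable p-morphism; (iii) re-derive $a'\vDash_w\phi\rightsquigarrow p$ and $a'\vDash_w p\rightsquigarrow\psi$ and $a'\nvDash_w\chi$ exactly as there; and (iv) add the one new line establishing $a'\vDash_w p\rightsquigarrow p$, namely that $R'[w(p)]=w(p)$ because $w(p)$ is defined through the unordered pair $\{x_1,x_2\}$ and $R'$ identifies $(x_1,x_2)$ with $(x_2,x_1)$. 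This yields $a'\nvDash_w (\phi\rightsquigarrow p)\land(p\rightsquigarrow\psi)\land(p\rightsquigarrow p)\to\chi$ and hence admissibility.

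I do not anticipate a serious obstacle, since the heavy lifting (frame axioms, p-morphism conditions, the inclusions $R'[w(\phi)]\subseteq w(p)$ and $w(p)\subseteq w(\psi)$) is already carried out in \cref{t:rho6 admissible}. The only genuinely new point to check is that the extra conjunct $p\rightsquigarrow p$ holds at $a'$, i.e.\ that the chosen $w(p)$ is $R'$-closed; the mild subtlety to watch is making sure the \emph{same} $R'$ that makes $w(p)=R'[w(\phi)]$ (needed for $\phi\rightsquigarrow p$) also satisfies $R'[w(p)]=w(p)$ (needed for $p\rightsquigarrow p$), which is precisely why the symmetric, unordered-pair definition of $R'$ from \cref{t:rho6 admissible} is exactly the right choice and no modification is actually required.
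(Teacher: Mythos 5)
Your proposal is correct and matches the paper's own proof essentially verbatim: the paper likewise reuses the frame $(X',R',S')$, the map $f$, and the valuation $w$ from the proof of \cref{t:rho6 admissible}, and notes that the identity $R'[w(p)]=w(p)$ (already established there) gives the extra conjunct $a' \vDash_w p \rightsquigarrow p$. No modification of $R'$ is needed, exactly as you concluded.
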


\begin{proof}
Let $(X,R,S)$ be a modal contact frame and $v$ a valuation on $X$ such that $(X,R,S) \nvDash_v (\phi\rightsquigarrow\psi)\to\chi$.
Let $(X',R',S')$, $f \colon X' \to X$, and the valuation $w$ be defined as in the proof of \cref{t:rho6 admissible}. It is shown in the proof of \cref{t:rho6 admissible} that  $R'[w(p)]=w(p)$ and that there is an element $a'$ such that $a' \vDash_w \phi\rightsquigarrow p$, $a' \vDash_w p\rightsquigarrow\psi$, and $a' \nvDash \chi$. From $R'[w(p)]=w(p)$ it follows that $a' \vDash_w p\rightsquigarrow p$. Thus, $a' \nvDash_w (p\rightsquigarrow p) \land (\phi\rightsquigarrow p)\land (p\rightsquigarrow \psi)\to\chi$, and hence  
\[
(X',R',S') \nvDash_w (p\rightsquigarrow p) \land (\phi\rightsquigarrow p)\land (p\rightsquigarrow \psi)\to\chi. 
\]
Therefore, ($\rho$9) is admissible in $\MSSIC$ by \cref{l:admissibility countermodels}.
\end{proof}

Let $\ZUMComp$\label{ZUMComp} be the class of upper continuous modal compingent algebras that are zero-dimensional. The following theorem, which states the completeness of $\MSSIC$ with respect to $\ZUMComp$, is an immediate consequence of \cref{thm:completeness pi2 rules,thm:MSSICU=MSSIC,thm:rho9 admissible}.

\begin{theorem}\label{thm:MSSIC complete wrt zMcompa}
For a formula $\phi$, we have
\[
\vdash_{\MSSIC}\phi \iff \ \vDash_{\ZUMComp}\phi.
\]
\end{theorem}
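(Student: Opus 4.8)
The plan is to realize $\mathsf{zMComp}_u$ as the class of algebras cut out by the $\Pi_2$-rules $\rho6$, $\rho7$, UC, $\rho9$, and then to show that adding all four of these rules to $\MSSIC$ is conservative. Concretely, I would work with the extension $\MSSIC + \{\rho6, \rho7, \text{UC}, \rho9\}$ and prove two things: (a) it is sound and complete with respect to $\mathsf{zMComp}_u$, and (b) it proves exactly the theorems of $\MSSIC$. Together these give $\vdash_{\MSSIC}\phi \iff \vDash_{\mathsf{zMComp}_u}\phi$.

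For (a), I would invoke \cref{thm:completeness pi2 rules} with $\mathcal{S} = \MSSIC$ (which has a universal modality by \cref{prop:MSSIC univ modality}) and $\Theta = \{\rho6, \rho7, \text{UC}, \rho9\}$. This yields that $\vdash_{\MSSIC + \Theta}\phi$ iff $\phi = 1$ holds in every simple $\MSSIC$-algebra with $0 \neq 1$ satisfying $\Pi(\rho6)$, $\Pi(\rho7)$, $\Pi(\text{UC})$, and $\Pi(\rho9)$. It then remains to identify this class through the bijection of \cref{rem:correspondence modal contact simple MSSIC-algebras}: by \cref{prop:Pi2-correspondence}, the sentences $\Pi(\rho6)$ and $\Pi(\rho7)$ force the modal compingent conditions and $\Pi(\text{UC})$ forces upper continuity, while \cite[Lemma~6.14]{BeBeSaVe19} shows $\Pi(\rho9)$ is equivalent to (S9), i.e.\ zerodimensionality. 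Hence the relevant simple $\MSSIC$-algebras correspond exactly to the zerodimensional upper continuous modal compingent algebras, so that $T_{\MSSIC}\cup\{\Pi(\rho):\rho\in\Theta\}\vDash\phi=1$ is equivalent to $\vDash_{\mathsf{zMComp}_u}\phi$, and (a) follows.

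For (b), I would observe that each rule in $\Theta$ is admissible in $\MSSIC$. By \cref{thm:MSSICU=MSSIC} the system $\MSSICU = \MSSIC + \{\rho6, \rho7, \text{UC}\}$ has exactly the theorems of $\MSSIC$, so $\MSSIC$ and $\MSSICU$ determine the same logic; and \cref{thm:rho9 admissible} shows that $\rho9$ is admissible in $\MSSIC$. Since admissibility of a $\Pi_2$-rule is, via \cref{l:admissibility countermodels}, a condition on the Kripke semantics of $\MSSIC$ --- which is unchanged on passing from $\MSSIC$ to the deductively equivalent $\MSSICU$ --- adding $\rho9$ to $\MSSICU$ introduces no new theorems. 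Thus $\MSSIC + \Theta = \MSSICU + \rho9$ is deductively equivalent to $\MSSIC$, and combining with (a) closes the argument.

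The main obstacle, as I see it, is the bookkeeping in step (b): one must be careful that the admissibility of $\rho9$, established for $\MSSIC$, still applies once the three rules of $\MSSICU$ are present, so that the four rules may be absorbed simultaneously rather than one at a time with possible cross-interaction. This is precisely why it is convenient to first collapse $\rho6$, $\rho7$, UC via \cref{thm:MSSICU=MSSIC} --- reducing to the single logic $\MSSIC$ --- and only then appeal to the frame-level admissibility criterion of \cref{l:admissibility countermodels} for $\rho9$, which depends only on the (shared) Kripke semantics and is therefore insensitive to the syntactic presentation of the logic.
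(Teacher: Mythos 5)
Your proposal is correct and takes essentially the same route as the paper, which obtains the theorem as an immediate consequence of \cref{thm:completeness pi2 rules}, \cref{thm:MSSICU=MSSIC}, and \cref{thm:rho9 admissible}; your steps (a) and (b) simply spell out the details the paper leaves implicit, namely the identification of the simple $\MSSIC$-algebras satisfying $\Pi(\rho 6)$, $\Pi(\rho 7)$, $\Pi(\text{UC})$, $\Pi(\rho 9)$ with $\mathsf{zMComp}_u$ and the simultaneous absorption of the four rules. Your care in step (b) is sound and in fact can be stated even more simply: admissibility of a rule depends only on the set of theorems of the system, so admissibility of ($\rho$9) in $\MSSIC$ transfers verbatim to the deductively equal system $\MSSICU$, with no need to revisit the Kripke semantics.
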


We now show that (S9) is preserved by MacNeille completions. We will obtain as a consequence that $\MSSIC$ is also complete with respect to  zero-dimensional modal de Vries algebras, and hence with respect to descriptive frames.

\begin{proposition}\label{prop:zerodim MacNeille}
Let $(B, \prec)$ be a compingent algebra satisfying \emph{(S9)}. Then ${(\overline{B}, \prec)}$ is a zero-dimensional de Vries algebra.
\end{proposition}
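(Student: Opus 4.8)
The plan is to separate the claim into its two components. Since $\overline{B}$ is a MacNeille completion it is a complete Boolean algebra, and by \cite[Rem.~5.11]{BeBeSaVe19} the relation $\prec$ defined on $\overline{B}$ as in \cref{definition:MN:Completion} makes $(\overline{B}, \prec)$ a de Vries algebra; this is precisely the non-modal content already used in the proof of \cref{Lemma:MN:Completion}. Hence the only thing left to establish is that $(\overline{B}, \prec)$ is zero-dimensional, i.e.\ that it satisfies (S9): whenever $\alpha \prec \beta$ in $\overline{B}$, there is some $\gamma \in \overline{B}$ with $\gamma \prec \gamma$ and $\alpha \prec \gamma \prec \beta$.

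To verify (S9), I would start from an arbitrary pair $\alpha \prec \beta$ in $\overline{B}$ and immediately unfold the definition of $\prec$ on $\overline{B}$: there exist $a, b \in B$ with $\alpha \le a \prec b \le \beta$. The key step is to push the problem down into $B$, where (S9) holds by hypothesis. Applying (S9) to $a \prec b$ yields an element $c \in B$ with $c \prec c$ and $a \prec c \prec b$. I then claim that $\gamma \coloneqq c$, regarded as an element of $\overline{B}$ via the inclusion $B \hookrightarrow \overline{B}$, is the required reflexive interpolant.

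The remaining work is to read off the three relations in $\overline{B}$ from the definition of $\prec$ by exhibiting suitable witnesses from $B$ in each case. For $\alpha \prec c$ use the witnesses $a, c$, since $\alpha \le a \prec c \le c$; for $c \prec \beta$ use the witnesses $c, b$, since $c \le c \prec b \le \beta$; and for the reflexivity $c \prec c$ use the witnesses $c, c$, since $c \le c \prec c \le c$. Taken together these give $\alpha \prec c \prec \beta$ with $c \prec c$, which is exactly (S9) in $\overline{B}$.

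I do not expect a genuine obstacle: the argument amounts to transferring the witness $c$ produced by (S9) in $B$ up to $\overline{B}$. The only point demanding a moment's care is confirming that a reflexive element of $B$ stays reflexive in $\overline{B}$, but this is immediate, since $c \in B$ and $c \prec c$ in $B$ trivially give the chain $c \le c \prec c \le c$ required by the definition of $\prec$ on $\overline{B}$.
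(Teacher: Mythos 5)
Your proposal is correct and follows essentially the same route as the paper: unfold the definition of $\prec$ on $\overline{B}$ to get witnesses $a \prec b$ in $B$, apply (S9) in $B$ to obtain the reflexive interpolant $c$, and transfer $\alpha \prec c \prec \beta$ and $c \prec c$ back to $\overline{B}$ via the definition of $\prec$ on the completion. The paper's proof leaves the de Vries algebra part implicit (it is already contained in \cref{Lemma:MN:Completion} via the cited remark of \cite{BeBeSaVe19}), whereas you state it explicitly, but this is only a difference in presentation.
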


\begin{proof}
Suppose that $\alpha \prec \beta$. Then by definition of $\prec$ on the MacNeille completion, there are $a,b \in B$ such that $\alpha \le a \prec b \le \beta$. By (S9), there is $c \in B$ such that $c \prec c$ and $a \prec c \prec b$. Since the inclusion of $B$ into $\overline{B}$ preserves $\prec$, we have $\alpha \prec c \prec \beta$ and $c \prec c$ in $\overline{B}$. Therefore, $(\overline{B}, \prec)$ satisfies (S9).
\end{proof}

\begin{definition}
A modal compact Hausdorff space $(X,R)$ is called a \emph{descriptive frame} or a \emph{modal space} if $X$ is a Stone space. 
\end{definition}
 
The dual equivalence between the category of upper continuous modal de Vries algebras and the category of modal compact Hausdorff spaces restricts to a dual equivalence between the categories of zero-dimensional upper continuous modal de Vries algebras and the category of descriptive frames (see \cite[Thm.~6.3]{BeBeHa15}).
Let $\ZAMDeV$\label{ZAMDeV} be the class of zero-dimensional finitely additive modal de Vries algebras and  $\ZUMDeV$\label{ZUMDeV} its subclass of zero-dimensional upper continuous modal de Vries algebras. Let also $\DFrm$ be the class of modal de Vries algebras of the form $(\RO(X), \prec, \Diamond^U)$ for some descriptive frame $(X,R)$.
The following theorem establishes the strong completeness of $\MSSIC$ with respect to $\ZUMDeV$, $\ZAMDeV$, and $\DFrm$.

\begin{theorem}
For a set of formulas $\Gamma$ and a formula $\phi$, we have
\[
\Gamma \vdash_{\MSSIC} \phi \iff \Gamma \vDash_{\ZUMDeV} \phi \iff \Gamma \vDash_{\ZAMDeV} \phi \iff \Gamma \vDash_{\DFrm} \phi.
\]
\end{theorem}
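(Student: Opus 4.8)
The plan is to prove the four conditions equivalent via the cycle
\[
\Gamma \vdash_{\MSSIC} \phi \implies \Gamma \vDash_{\mathsf{zMDV}_a} \phi \implies \Gamma \vDash_{\mathsf{zMDV}_u} \phi \implies \Gamma \vDash_{\mathsf{DFrm}} \phi \implies \Gamma \vdash_{\MSSIC} \phi.
\]
The first implication is strong soundness: since every axiom of $\MSSIC$ is valid in all finitely additive modal de Vries algebras, \cref{cor:MSSIC strongly complete MDVA} already yields $\Gamma \vdash_{\MSSIC} \phi \Rightarrow \Gamma \vDash_{\mathsf{MDV}_a} \phi$, and the inclusion $\mathsf{zMDV}_a \subseteq \mathsf{MDV}_a$ gives the claim. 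The two middle implications are immediate from the inclusions $\mathsf{DFrm} \subseteq \mathsf{zMDV}_u \subseteq \mathsf{zMDV}_a$: a descriptive frame is a modal compact Hausdorff space whose associated algebra is zerodimensional and upper continuous, while upper continuity implies finite additivity by \cref{prop:upper cont implies finitely additive}. Thus validity over a larger class forces validity over each subclass. Only the last implication requires genuine work.

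For strong completeness I would argue by contraposition, assuming $\Gamma \nvdash_{\MSSIC} \phi$ and constructing a $\mathsf{DFrm}$ countermodel in three steps. First I would upgrade the weak completeness of $\MSSIC$ with respect to the class $\mathsf{zMComp}_u$ of zerodimensional upper continuous modal compingent algebras (\cref{thm:MSSIC complete wrt zMcompa}) to \emph{strong} completeness, by the compactness argument already used in the proof of \cref{thm:strong sound and complete UMC}. This only requires that the simple $\MSSIC$-algebras corresponding to $\mathsf{zMComp}_u$ form an elementary class; as in that proof, compingency and upper continuity are first-order expressible in $\mathcal{ML}^+$, and zerodimensionality (S9) is plainly first-order as well. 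Compactness then produces a single $\mathbf{B} \in \mathsf{zMComp}_u$ with a valuation $v$ such that $v(\gamma)=1$ for all $\gamma \in \Gamma$ and $v(\phi) \neq 1$.

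Second, I would pass to the MacNeille completion $\overline{\mathbf{B}}$. By \cref{Lemma:MN:Completion}, $\overline{\mathbf{B}}$ is an upper continuous modal de Vries algebra, and by \cref{prop:zerodim MacNeille} it remains zerodimensional; hence $\overline{\mathbf{B}} \in \mathsf{zMDV}_u$. Since the inclusion $\mathbf{B} \embeds \overline{\mathbf{B}}$ preserves and reflects $\prec$ and commutes with $\Diamond$, the valuation $v$ reads as a valuation on $\overline{\mathbf{B}}$ still witnessing the failure, so $\Gamma \nvDash_{\mathsf{zMDV}_u} \phi$. Third, by the duality of \cite[Thm.~6.3]{BeBeHa15} together with the representation \cref{thm:upper to modal KHaus}, the zerodimensional upper continuous modal de Vries algebra $\overline{\mathbf{B}}$ is isomorphic to $(\RO(X), \prec, \Diamond^U)$ for a descriptive frame $(X,R)$, i.e.\ to a member of $\mathsf{DFrm}$; transporting $v$ along this isomorphism yields a $\mathsf{DFrm}$ countermodel, so $\Gamma \nvDash_{\mathsf{DFrm}} \phi$. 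This closes the cycle.

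The main obstacle is this strong-completeness step, and specifically checking that $\mathsf{zMComp}_u$ is elementary (so that compactness applies) and that the MacNeille completion preserves upper continuity, de Vries additivity, and zerodimensionality \emph{simultaneously}. All three are covered by results already in hand: the elementarity and compactness mirror \cref{thm:strong sound and complete UMC} essentially verbatim, the modal and continuity content is \cref{Lemma:MN:Completion}, and zerodimensionality is \cref{prop:zerodim MacNeille}. Combining the latter two is routine precisely because (S9) is a condition on $\prec$ alone and does not interact with $\Diamond$, so the two preservation arguments do not interfere.
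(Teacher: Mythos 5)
Your proposal is correct and takes essentially the same route as the paper: both upgrade the weak completeness of $\MSSIC$ with respect to $\mathsf{zMComp}_u$ (\cref{thm:MSSIC complete wrt zMcompa}) to strong completeness by the compactness argument used for \cref{thm:strong sound and complete UMC}, pass to MacNeille completions via \cref{Lemma:MN:Completion,prop:zerodim MacNeille}, and finish with the dual equivalence between zerodimensional upper continuous modal de Vries algebras and descriptive frames, with soundness handled through the inclusions among the classes. The only difference is presentational: the paper states successive strong soundness-and-completeness claims for each class, whereas you arrange the same ingredients as a cycle of implications.
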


\begin{proof}
By \cref{thm:MSSIC complete wrt zMcompa}, $\MSSIC$ is sound and complete with respect to $\ZUMComp$. That $\MSSIC$ is strongly sound and complete with respect to $\ZUMComp$ follows by arguing as in \cref{thm:completeness upper modal de vries}. Then, by using \cref{prop:zerodim MacNeille,Lemma:MN:Completion} and arguing as in \cref{thm:completeness upper modal de vries}, we obtain that $\MSSIC$ is strongly sound and complete with respect to $\ZUMDeV$. By arguing as in \cref{cor:MSSIC strongly complete MDVA}, it follows that $\MSSIC$ is strongly sound and complete also with respect to $\ZAMDeV$. That $\MSSIC$ is strongly sound and complete with respect to $\DFrm$ follows from the dual equivalence between the categories of  zero-dimensional upper continuous modal de Vries algebras and the category of descriptive frames, which yields that $\DFrm \subseteq \ZUMDeV$ and each member of $\DFrm$ is isomorphic to one of $\ZUMDeV$.
\end{proof}

By \cite[Thm.~5.8]{BeBeHa15}, to each modal compact Hausdorff space $(X,R)$ it is possible to associate the modal de Vries algebra $(\RO(X), \prec, \Diamond^L)$, where
$\Diamond^L O = \int (\cl (R^{-1}[O]))$.
The operator $\Diamond^L$ is de Vries additive and lower continuous, where we recall from \cref{def:de vries mult and lower continuous} that $\Diamond$ on $(B, \prec)$ is lower continuous if $\Diamond a = \bigvee \{\Diamond b: b\prec a\}$ for every $a \in B$. 
Lower continuous modal de Vries algebras form a category dually equivalent to the category of modal compact Hausdorff spaces \cite[Thm.~5.14]{BeBeHa15} and equivalent to the categories of modal de Vries algebras and of upper continuous modal de Vries algebras \cite[Thm.~4.23]{BeBeHa15}.
By \cref{prop:box and diamond}, $\Diamond^L$ corresponds to an upper continuous de Vries multiplicative operator $\Box^U$ on $\RO(X)$, which is given by  $\Box^U O = \int (\Box_R(\cl(O)))$, where $\Box_R Y=X \setminus R^{-1}[X \setminus Y]$ for $Y \subseteq X$.

\begin{remark}\label{rem:lower continous calculus}
As for future work, we mention that
it should  also be possible to obtain a calculus strongly sound and complete with respect to the class of lower continuous modal de Vries algebras. However, unlike upper continuity, the lower continuity of a de Vries additive operator does not imply finite additivity (see \cite[Ex.~4.16(1)]{BeBeHa15}). So, the operator $\Diamond$ is not necessarily finitely additive in a lower continuous de Vries algebra. Thus, Axiom (K) is not valid in the class of lower continuous modal de Vries algebras and must be replaced by several axioms and inference rules. As a result, the definition of such a calculus would be more involved than the one of $\UMSSIC$. For this reason, we leave this investigation to  future work.
\end{remark}

We end the section by showing that the $\Pi_2$-rule corresponding to lower continuity is admissible in $\MSSIC$. Let (LC) be the $\Pi_2$-rule

\begin{equation*}
\inference{\Diamond p \land (p\rightsquigarrow\phi)\to\chi}{\Diamond\phi\to\chi}.
\end{equation*}
A proof similar to the one of \cref{prop:Pi2-correspondence}\eqref{prop:Pi2-correspondence:item2}
yields that if $(B, \prec, \Diamond)$ is a finitely additive modal contact algebra and $(B, \rightsquigarrow, \Diamond)$ the corresponding simple $\MSSIC$-algebra, then $(B, \rightsquigarrow, \Diamond)$ satisfies $\Pi\text{(LC)}$ iff $(B, \prec, \Diamond)$ is lower continuous.

\begin{theorem}\label{thm:LC admissible}
The $\Pi_2$-rule \emph{(LC)} is admissible in $\MSSIC$.
\end{theorem}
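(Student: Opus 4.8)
The plan is to mimic the proof of the admissibility of (UC) in \cref{t:uc admissible} almost verbatim, invoking \cref{l:admissibility countermodels}. The only difference between (LC) and (UC) is that $\Box$ is replaced by $\Diamond$, and since $\Diamond$ is interpreted existentially on modal contact frames, the verification is if anything simpler than the universal case.

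First I would start from a modal contact frame $(X,R,S)$ and a valuation $v$ witnessing $(X,R,S) \nvDash_v \Diamond\phi \to \psi$, so that there is $a \in X$ with $a \vDash_v \Diamond\phi$ and $a \nvDash_v \psi$. I would then take $(X',R',S')$, the regular stable p-morphism $f \colon X' \to X$, and the valuation $w$ to be exactly those defined in the proof of \cref{t:rho7 admissible}: namely $X'$ consists of two disjoint copies of $X$ with $R$ collapsed to the identity on the first copy, $f(i,x)=x$, $w(x)=f^{-1}[v(x)]$ for every propositional variable $x$ distinct from $p$, and $w(p)=\{(1,x) : x \vDash_v \phi\}$.

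With $a'=(1,a)$, I would verify the three conditions needed to conclude $a' \nvDash_w (p\rightsquigarrow\phi) \land \Diamond p \to \psi$. The facts that $a' \vDash_w p \rightsquigarrow \phi$, that is $R'[w(p)] \subseteq w(\phi)$, and that $a' \nvDash_w \psi$ are established exactly as in the proof of \cref{t:rho7 admissible}, since they do not involve the modal operator. The only new point is $a' \vDash_w \Diamond p$: since $a \vDash_v \Diamond\phi$, there is $b \in X$ with $aSb$ and $b \vDash_v \phi$; then $a' S' (1,b)$ by the definition of $S'$, and $(1,b) \in w(p)$ because $b \vDash_v \phi$, so $(1,b)$ is the required $S'$-successor of $a'$ satisfying $p$.

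Having thus produced a modal contact frame and a valuation falsifying the premise of (LC) whenever the conclusion is falsified, \cref{l:admissibility countermodels} yields the admissibility of (LC) in $\MSSIC$. I do not anticipate any genuine obstacle: the construction transfers directly because the first copy, on which $R'$ is the identity, forces $p$ to coincide with $\phi$ there, while the existential nature of $\Diamond$ means a single successor witnessing $\phi$ suffices—there is no need to control all $S$-successors as one must for $\Box$. The only point worth double-checking is that the witness $b$ lands in the first copy, which it does since $S'$ keeps the copy index fixed, and this is precisely what makes $(1,b)$ satisfy $p$.
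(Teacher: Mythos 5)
Your proposal is correct and follows essentially the same route as the paper's own proof: both reuse the frame $(X',R',S')$, the map $f$, and the valuation $w$ from the proof of \cref{t:rho7 admissible}, verify $a' \vDash_w p \rightsquigarrow \phi$ and $a' \nvDash_w \psi$ exactly as there, and exhibit the witness $(1,b)$ for $a' \vDash_w \Diamond p$ coming from an $S$-successor $b$ of $a$ satisfying $\phi$, concluding via \cref{l:admissibility countermodels}. There is nothing to add.
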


\begin{proof}
Let $(X,R,S)$ be a modal contact frame and $v$ a valuation on $X$ such that $(X,R,S) \nvDash_v \Diamond \phi \to \chi$. 
Then there is $a \in X$ such that $a \vDash_v \Diamond \phi$ but $a \nvDash_v \chi$. 
Let $(X',R',S')$, $f \colon X' \to X$, and $w$ be defined as in the proof of \cref{t:rho7 admissible}. 

Let $a' = (1,a) \in X'$. We show that $a' \nvDash_w \Diamond p \land (p\rightsquigarrow\phi)\to\chi$. 
The proofs that $a' \vDash_w p \rightsquigarrow \phi$ and $a' \nvDash_w \chi$ are the same as in the proof of \cref{t:rho7 admissible}. It remains to show that $a' \vDash_w \Diamond p$. Since $a \vDash_v \Diamond \phi$, there exists $b \in X$ such that $aSb$ and $b \vDash_v \phi$. If $b' = (1,b)$, then $b' \in \{(1,x) : x \vDash_v \phi\} = w(p)$. Thus, $a'S'b'$ and $b' \vDash_w p$, which imply that $a' \vDash_w \Diamond p$. Consequently, $a' \nvDash_w \Diamond p \land (p\rightsquigarrow\phi)\to\chi$, which yields $(X',R',S') \nvDash_w \Diamond p \land (p\rightsquigarrow\phi)\to\chi$. Therefore, (LC) is admissible in $\MSSIC$ by \cref{l:admissibility countermodels}.
\end{proof}

It is possible to prove analogues of \cref{thm:sound and complete,thm:strong sound and complete UMC} for lower continuous finitely additive modal compingent algebras. It then follows from \cref{thm:LC admissible} that $\MSSIC$ is strongly sound and complete with respect to lower continuous finitely additive modal compingent algebras. However, it is not clear if $\MSSIC$ is strongly sound and complete with respect to lower continuous modal de Vries algebras because that would require to prove an analogue of \cref{Lemma:MN:Completion} for lower continuous modal compingent algebras.

The table below lists all the classes of contact algebras that we considered and the page in which they were introduced.

\begin{center}\label{table}
\begin{tabular}{|p{.6in}p{3.25in}p{.55in}|}\hline
\textbf{Class} & \textbf{Elements} & \textbf{Location}\\ \hline
$\Con$ & contact algebras & page~\pageref{Con,Comp,DeV} \\
$\Comp$ &  compingent algebras & page~\pageref{Con,Comp,DeV} \\
$\DeV$ &  de Vries algebras & page~\pageref{Con,Comp,DeV} \\
$\AMCon$ &  finitely additive modal contact algebras & page~\pageref{AMCon} \\
$\UMComp$ & upper continuous modal compingent algebras & page~\pageref{UMComp} \\
$\UMDeV$ & upper continuous modal de Vries algebras & page~\pageref{UMDeV} \\
$\AMDeV$ & finitely additive modal de Vries algebras & page~\pageref{AMDeV} \\
$\ZUMComp$ & zero-dimensional upper continuous modal compingent algebras & page~\pageref{ZUMComp} \\
$\ZAMDeV$ & zero-dimensional finitely additive modal de Vries algebras & page~\pageref{ZAMDeV} \\
$\ZUMDeV$ & zero-dimensional upper continuous modal de Vries algebras & page~\pageref{ZUMDeV} \\
\hline
\end{tabular}
\end{center}

\appendix

\section{Kripke completeness of $\MSSIC$}\label{appendix}

In this appendix we present a proof of \cref{prop:L Kripke complete}, which states the Kripke completeness of $\MSSIC$ and provides a first-order characterization of the $\MSSIC$-frames. We utilize the SQEMA algorithm \cite{CoGoVa06} and its extension to polyadic formulas \cite{CoGoVa06b}. We show that the algorithm succeeds on all the axioms of $\MSSIC$ and compute their locally first-order correspondent formulas. The success of the algorithm guarantees that all the axioms are canonical, and that $\MSSIC$ is Kripke complete.

In order to execute the algorithm we rewrite the axioms of $\MSSIC$ in the modal propositional language $\mathcal{L}_{\nabla \Box}$ containing two unary modalities $\univ$ and $\Box$, and a binary modality $\nabla$. The binary modality $\rightsquigarrow$ is replaced by $\nabla$ by setting $\nabla(\phi,\psi)=\neg \phi \rightsquigarrow \psi$ and $\univ \phi$, which is defined as an abbreviation of $\top \rightsquigarrow \phi$ in $\MSSIC$, replaces $\nabla(\bot, \phi)$. 
We use the abbreviations $\langle \exists \rangle \varphi, \Diamond \varphi, \Delta(\varphi,\psi)$ for $\neg \univ \neg \varphi, \neg \Box \neg \varphi, \neg \nabla(\neg \varphi, \neg\psi)$, respectively.
We add Axiom (A0) that defines $\univ$ in terms of $\nabla$. It is straightforward to check that the set of corresponding axioms of $\MSSIC$ is the following.
\begin{itemize}
\item[(A0)] $\univ p \biimpl \nabla(\bot,p)$;
\item[(A1)] $\nabla(\top,p) \wedge \nabla(p,\top)$;
\item[(A2)] $\nabla(p \wedge q,r) \biimpl \nabla(p,r) \wedge \nabla(q,r)$;
\item[(A3)] $\nabla(p,q \wedge r) \biimpl \nabla(p,q) \wedge \nabla(p,r)$;
\item[(A4)] $\nabla(p,q) \impl (p \vee q)$;
\item[(A5)] $\nabla(p,q) \biimpl \nabla(q,p)$;
\item[(A8)] $\univ p \impl \univ \univ p$;
\item[(A9)] $\ne\univ p \impl \univ\neg\univ p$;
\item[(A10)] $\nabla(p,q) \biimpl \univ \nabla(p,q)$;
\item[(A11)] $\univ p \impl \nabla(\univ p,\bot)$;
\item[(K)] $\Box(p \to q) \to (\Box p \to \Box q)$;
\item[(Add)] $\nabla(p,q) \impl \nabla(\Diamond p, \Box q)$.
\end{itemize}

Formulas of $\mathcal{L}_{\nabla \Box}$ will be interpreted in Kripke frames of the form $(X,E,T,S)$, where $E$ and $S$ are binary relations, and $T$ is a ternary relation. The modality $\Box$ is interpreted as in \cref{Sec:Admissibility}, while the interpretations of $\univ$ and $\nabla$ are consequences of their definitions in terms of $\rightsquigarrow$: if $x \in X$ and $v$ is a valuation on $X$, we define 
\begin{align*}
x \vDash_v \univ \phi \quad & \text{iff} \quad \forall y \in X \, (xEy \text{ implies } y \vDash_v \phi )\\
x \vDash_v \nabla(\phi,\psi) \quad & \text{iff} \quad \forall y, z \in X \, (Txyz \text{ implies } y \vDash_v \phi \text{ or } z \vDash_v \psi)\\
x \vDash_v \Box \phi \quad & \text{iff} \quad \forall y \in X \, (xSy \text{ implies } y \vDash_v \phi).
\end{align*} 
The \emph{reversive extension} of $\mathcal{L}_{\nabla \Box}$ is obtained by extending $\mathcal{L}_{\nabla \Box}$ with the unary modalities $\univin$ and $\Boxin$, and the binary modalities $\nablain{1}$, $\nablain{2}$. We will also use the abbreviations (for $i=1,2$)
\begin{align*}
\exin \phi \coloneqq \neg \univin \neg \phi \qquad \Diamondin \phi \coloneqq \neg \Boxin \neg \phi \qquad \Deltain{i} (\phi,\psi) \coloneqq \neg \nablain{i} (\neg \phi,\neg \psi).
\end{align*}
We extend the interpretation in Kripke frames to all the formulas of the reversive extension of $\mathcal{L}_{\nabla \Box}$ in the following way.
\begin{align*}
x \vDash_v \univin \phi \quad & \text{iff} \quad \forall y \in X \, (yEx \text{ implies } y \vDash_v \phi )\\
x \vDash_v \nablain{1}(\phi,\psi) \quad & \text{iff} \quad \forall y, z \in X \, (Tyxz \text{ implies } y \vDash_v \phi \text{ or } z \vDash_v \psi)\\
x \vDash_v \nablain{2}(\phi,\psi) \quad & \text{iff} \quad \forall y, z \in X \, (Tzyx \text{ implies } y \vDash_v \phi \text{ or } z \vDash_v \psi)\\
x \vDash_v \Boxin \phi \quad & \text{iff} \quad \forall y \in X \, (ySx \text{ implies } y \vDash_v \phi).
\end{align*}
The SQEMA algorithm will manipulate sets of formulas in the hybrid language obtained by adding \emph{nominals} to the reversive extension of $\mathcal{L}_{\nabla \Box}$. Nominals are a special sort of propositional variables and will be denoted by bold letters $\mathbf{i}, \mathbf{j}, \mathbf{k}, \dots$. We require that valuations map nominals to singletons. In this way, nominals serve as names for the elements of the frame.

Let $\phi$ be a formula in this hybrid language. The \emph{standard translation} $\text{ST}(\phi, x)$ of $\phi$ is a first-order formula in the first-order language containing the binary relation symbols $E,S$ and the ternary relation symbol $T$. The standard translation on the connectives of the reversive extension of $\mathcal{L}_{\nabla \Box}$ is defined in the usual way (see, e.g., \cite[Def.~2.45]{BRV01}) that reflects the interpretation of the connectives in Kripke frames given above. If $\mathbf{j}$ is a nominal, then $\text{ST}(\mathbf{j}, x)$ is the formula $x=y_j$, where $y_j$ is a reserved variable associated to the nominal $\mathbf{j}$ (for more details on the standard translations of formulas in hybrid languages see, e.g., \cite[p.~585]{CoGoVa06b}).

The algorithm SQEMA, which is given in full detail in \cite[Sec.~3]{CoGoVa06b}, takes as input a modal formula $\phi$ and, if it succeeds, it outputs a first-order formula that is a local first-order correspondent of $\phi$. A local first-order correspondent of a formula $\phi$ in $\mathcal{L}_{\nabla \Box}$ is a formula $\alpha(x)$ in the first-order language with identity, two binary relation symbols $E,S$, and a ternary relation symbol $T$ with the following property: for every Kripke frame $(X,E,T,S)$ and $a \in X$, we have $a \vDash_v \phi$ for any valuation $v$ iff $\alpha(a)$ holds in the frame. In particular, $\phi$ is valid in the frame iff $\forall x\, \alpha(x)$ holds.
For the general definition of local first-order correspondent see, e.g., \cite[Def.~3.29]{BRV01}.

We now briefly describe the algorithm, assuming the input is a formula $\phi$ in $\mathcal{L}_{\nabla \Box}$.\\
\textbf{Phase 1.} The formula $\neg \phi$ is rewritten into an equivalent disjunction of formulas $\bigvee \alpha_k$ that does not contain the connectives $\to$ and $\biimpl$ and is such that $\neg$ only occurs in front of propositional variables and no further distribution of $\ex, \Diamond, \Delta$, and $\wedge$ over $\vee$ is possible.\\
\textbf{Phase 2.} The algorithm then manipulates sets of formulas that are called \emph{systems} and are denoted with double vertical bars on their left. Each disjunct $\alpha_k$ yields an initial system with a single formula $\lVert \neg \mathbf{i} \vee \alpha_k$, where $\mathbf{i}$ is a fixed, reserved nominal.
Some transformation rules will be applied to the formulas of the systems and the algorithm succeeds if it manages to eliminate all the variables from each system, otherwise it terminates reporting failure. The following are the rules that we will use in our case.\\
\textbf{Rules for the connectives:}
\begingroup
\addtolength{\jot}{1em}
\begin{gather*}
\text{($\wedge$-rule)} \qquad \inference{\phi \vee (\psi \wedge \chi)}{\phi \vee \psi, \, \phi \vee \chi}\\
\text{($\univ$-rule)} \qquad \inference{\phi \vee \univ \psi}{\univin \phi \vee \psi} \hspace{1.2cm} \text{($\Box$-rule)} \qquad \inference{\phi \vee \Box \psi}{\Boxin \phi \vee \psi}\\
\end{gather*}
\begin{gather*}
\text{($\nabla$-rules)} \qquad \inference{\phi \vee \nabla (\psi_1,\psi_2)}{\nablain{1}(\phi,\psi_2) \vee \psi_1} \qquad \inference{\phi \vee \nabla (\psi_1,\psi_2)}{\nablain{2}(\psi_1,\phi) \vee \psi_2}\\
\text{($\ex$-rule)} \qquad \inference{\neg \mathbf{j} \vee \ex \psi}{\neg \mathbf{j} \vee \ex \mathbf{k},\, \neg \mathbf{k} \vee \psi} \hspace{1.2cm} \text{($\Diamond$-rule)} \qquad \inference{\neg \mathbf{j} \vee \Diamond \psi}{\neg \mathbf{j} \vee \Diamond \mathbf{k},\, \neg \mathbf{k} \vee \psi}\\
\text{($\Delta$-rule)} \qquad \inference{\neg \mathbf{j} \vee \Delta (\psi_1,\psi_2)}{\neg \mathbf{j} \vee \Delta (\mathbf{k}_1,\mathbf{k}_2),\, \neg \mathbf{k}_1 \vee \psi_1,\, \neg \mathbf{k}_2 \vee \psi_2},
\end{gather*}
\endgroup
where $\mathbf{j}, \mathbf{k}, \mathbf{k}_1, \mathbf{k}_2$ are nominals and $\mathbf{k}, \mathbf{k}_1, \mathbf{k}_2$ do not appear in the premises.\\
\textbf{Ackermann rule:} If $p$ is a variable that does not occur in the formulas $\phi_1, \dots, \phi_n$ and each of the formulas $\psi_1, \dots, \psi_m$ is negative in $p$ or does not contain $p$, then the rule replaces\\[1ex]
\begin{minipage}{0.4\textwidth}
\hspace{0.5cm} a system \hspace{1cm}
$
\begin{syst}
\phi_1 \vee p\\
\hspace{0.6cm}\vdots\\
\phi_n \vee p\\
\psi_1(p)\\
\hspace{0.4cm}\vdots\\
\psi_m(p)
\end{syst}
$
\end{minipage}%
\begin{minipage}{0.5\textwidth}
with \hspace{1cm}
$
\begin{syst}
\psi_1((\phi_1 \wedge \dots \wedge \phi_n)/\neg p)\\
\hspace{1.6cm}\vdots\\
\psi_m((\phi_1 \wedge \dots \wedge \phi_n)/\neg p)
\end{syst}
$
\end{minipage}\\[1ex]
\textbf{Polarity-switching rule:} 
If $p$ is a variable, then every occurrence of $\neg p$ is replaced with $p$ and every occurrence of $p$ not prefixed by $\neg$ is replaced with $\neg p$.\\
\textbf{Phase 3.} If the algorithm succeeds, then each system is rewritten into a system consisting of formulas that do not contain propositional variables. Let $\mathsf{pure}_k$ be the conjunction of the formulas in the $k$-th system, and define $\mathsf{pure}(\phi) \coloneqq \bigvee \mathsf{pure}_k$. Let $\overline{y}$ be the tuple of reserved variables that are associated to the nominals occurring in $\mathsf{pure}(\phi)$ except for the special nominal $\mathbf{i}$, which corresponds to a reserved variable $x$ that is not in $\overline{y}$. The algorithm returns the first-order formula $\forall \overline{y}\, \exists x_0 \, \text{ST}(\neg \mathsf{pure}(\phi), x_0)$, in which the only variable that occurs free is $x$.

\begin{theorem}{\textup{\cite[Thms.~4.3, 5.14]{CoGoVa06b}}}
If SQEMA succeeds on $\phi$, then $\phi$ is canonical and the output of the algorithm is a local first-order correspondent of $\phi$. 
\end{theorem}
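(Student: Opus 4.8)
The statement is the correctness theorem for SQEMA, established in \cite{CoGoVa06b}; the plan is to reconstruct the two assertions it bundles together — that the returned first-order formula is a local correspondent of $\phi$, and that $\phi$ is canonical — by tracking what is preserved as the algorithm rewrites its systems. The unifying idea is that every rule applied in Phases 1--2 transforms a system into a \emph{semantically equivalent} one, where equivalence is measured locally at the reserved nominal $\mathbf{i}$ over a Kripke frame $(X,E,T,S)$. Once all propositional variables have been eliminated, the standard translation of the resulting pure system is, by construction, a first-order condition on the frame that holds exactly when $\phi$ is locally satisfied, which is the definition of a local first-order correspondent.

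For the correspondence half I would verify rule by rule that no semantic content is lost. The distribution and negation-normal-form manipulations of Phase 1 are plain propositional equivalences. The connective rules (the $\univ$-rule, the $\Box$-rule, and the two $\nabla$-rules) are instances of residuation: the reversive modalities $\univin$, $\Boxin$, $\nablain{1}$, $\nablain{2}$ are precisely the adjoints of $\univ$, $\Box$, and the two coordinates of $\nabla$ with respect to the relations $E$, $S$, $T$, so moving a modality to the other side of a disjunction as its inverse preserves truth at every point. The nominal-introduction rules (the $\ex$-, $\Diamond$-, and $\Delta$-rules) merely name an existential witness with a fresh nominal and hence preserve satisfiability. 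The crucial step is the Ackermann rule, whose soundness is the \emph{Ackermann Lemma}: when $p$ occurs positively only in the clauses $\phi_k \vee p$ and negatively (or not at all) in the $\psi_l$, substituting the minimal valuation $\phi_1 \wedge \dots \wedge \phi_n$ for $\neg p$ yields an equivalent system. Composing these equivalences gives $\phi \liff \mathsf{pure}(\phi)$ locally, so $\forall \overline{y}\, \exists x_0\, \text{ST}(\neg \mathsf{pure}(\phi), x_0)$ is the sought first-order correspondent.

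The harder half is \textbf{canonicity}, since local frame-equivalence alone does not guarantee validity on the canonical frame of the logic axiomatised by $\phi$. Here the plan is to replay the same run of the algorithm but interpret each system on the canonical general frame, checking that every rule is \emph{persistent}, i.e.\ that validity transfers between the discrete and the canonical reading. The delicate point is again the Ackermann rule: one must show that the minimal valuation $\phi_1 \wedge \dots \wedge \phi_n$ is an \emph{admissible} valuation of the general frame and that the infima and suprema it implicitly invokes actually exist and are computed correctly there. This is exactly where the inductive syntactic shape enforced by a successful run is used — it guarantees that the substituted term is built from closed, clopen-definable pieces, so that the topological version of the Ackermann Lemma applies. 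I expect this persistence argument, rather than the correspondence bookkeeping, to be the main obstacle, as it requires the order-topological machinery of \cite{CoGoVa06b} instead of a routine semantic induction.
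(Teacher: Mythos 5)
This statement is not proved in the paper at all---it is imported verbatim from \cite[Thms.~4.3, 5.14]{CoGoVa06b}, so there is no internal proof to compare against; your proposal is in effect a reconstruction of the argument in that reference. Judged against that argument, your correspondence half is essentially faithful: Phase 1 is propositional rewriting, the connective rules are sound by the adjunction (residuation) between $\univ, \Box, \nabla$ and the inverse modalities $\univin, \Boxin, \nablain{1}, \nablain{2}$, the nominal rules name existential witnesses, and the discrete Ackermann lemma justifies variable elimination; composing the local equivalences and applying the standard translation yields the local first-order correspondent.

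Your canonicity half, however, hinges on a step that would fail as stated. You propose to show that the minimal valuation $\phi_1 \wedge \dots \wedge \phi_n$ substituted by the Ackermann rule \emph{is an admissible valuation} of the general frame. It is not, in general: on a descriptive frame, nominals denote singletons and the inverse modalities produce closed sets, so the minimal valuation is a closed---typically not clopen, hence not admissible---subset. Indeed, if admissibility held, canonicity would reduce immediately to the discrete argument and there would be no delicate point at all. The proof in \cite{CoGoVa06b} goes the other way: it establishes a topological (Esakia-style) Ackermann lemma showing that, for systems obeying the positive/negative occurrence discipline enforced by a successful run, the equivalence produced by substituting the \emph{closed} minimal valuation persists over descriptive frames; the key tool is Esakia's lemma (commuting the relevant operators with intersections of downward-directed families of closed sets) together with compactness. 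This gives d-persistence of $\phi$---validity transfers from the canonical general frame to its underlying Kripke frame---and d-persistence implies canonicity. So you correctly located the crux at the Ackermann rule, but its resolution is the opposite of what you wrote: one proves the lemma survives \emph{despite} the inadmissibility of the minimal valuation, not that admissibility holds.
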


We are now ready to employ the SQEMA algorithm to prove the following theorem that immediately implies \cref{prop:L Kripke complete}.

\begin{theorem}\label{thm:appendix}
All the axioms of $\MSSIC$ are canonical and
\begin{enumerate}
\item\label{thm:appendix:item1} \textup{(A0)} locally corresponds to $\forall z \, (xEz \biimpl \exists y \,Txyz)$;
\item\label{thm:appendix:item2} \textup{(A1)}, \textup{(A2)}, and \textup{(A3)} locally correspond to $\top$;
\item\label{thm:appendix:item3} \textup{(A4)} locally corresponds to $Txxx$;
\item\label{thm:appendix:item4} \textup{(A5)} locally corresponds to $\forall y, z \, (Txyz \to Txzy)$;
\item\label{thm:appendix:item5} \textup{(A8)} locally corresponds to $\forall y,z \,((x E y \wedge y E z) \to xEz)$ and is a consequence of \textup{(A5)} and \textup{(A9)};
\item\label{thm:appendix:item6} \textup{(A9)} locally corresponds to $\forall y,z \, ((xEy \wedge xEz) \to yEz)$;
\item\label{thm:appendix:item7} The right-to-left implication of \textup{(A10)} is a consequence of \textup{(A5)};
\item\label{thm:appendix:item8} The left-to-right implication of \textup{(A10)} locally corresponds to 
\[
\forall y,z,w \, ((x E w \wedge Twyz) \to Txyz);
\]
\item\label{thm:appendix:item9} \textup{(A11)} is a consequence of \textup{(A0)}, \textup{(A5)}, and \textup{(A8)};
\item\label{thm:appendix:item10} \textup{(K)} locally corresponds to $\top$;
\item\label{thm:appendix:item11} \textup{(Add)} locally corresponds to 
\[
\forall y,z,w \, ((Txyz \wedge z S w) \to \exists u \,(Txuw \wedge y S u)).
\]
\end{enumerate}
\end{theorem}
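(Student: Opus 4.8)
The plan is to run the SQEMA algorithm of \cite{CoGoVa06b} on each axiom in the list (A0)--(A11), (K), (Add) separately, and for every axiom on which the algorithm terminates successfully to invoke \cite[Thms.~4.3, 5.14]{CoGoVa06b} (quoted above) to conclude that the axiom is canonical and that the returned first-order formula is its local correspondent. Since I expect success on all the axioms, their canonicity gives that $\MSSIC$ is determined by its canonical frame and is therefore Kripke complete; the conjunction of the computed correspondents then yields exactly the first-order conditions asserted in \cref{prop:L Kripke complete}, after the translation from the intermediate relations $(E,T,S)$ back to the relations $(T,S)$ of an $\MSSIC$-frame (using that (A0) forces $xEz \liff \exists y\, Txyz$, so $E$ is definable from $T$).

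For each axiom I would follow the three phases verbatim. In Phase~1 I negate the axiom and drive negations inward to a disjunction in which $\neg$ stands only in front of variables; for the biconditional axioms (A0), (A2), (A3), (A5), (A10) I treat the two implications separately, recording the cases where one direction is a propositional consequence of the connective rules (so its negation is unsatisfiable and the correspondent is $\top$) or follows from an already-treated axiom. In Phase~2 I form the initial system $\lVert \neg \mathbf{i} \vee \alpha_k$ and apply the $\wedge$-, $\univ$-, $\Box$-, $\nabla$-, $\ex$-, $\Diamond$-, and $\Delta$-rules to introduce the reserved nominals and the reversive adjoints $\univin, \Boxin, \nablain{1}, \nablain{2}$, then eliminate each propositional variable by the Ackermann rule, inserting the polarity-switching rule whenever a variable first occurs with the wrong polarity. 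In Phase~3 I read off $\mathsf{pure}(\phi)$ and compute $\mathrm{ST}(\neg\mathsf{pure}(\phi),x_0)$. This should send (A1)--(A3) and (K) to $\top$, while producing the genuine relational conditions for (A0), (A4), (A5), (A9), the left-to-right direction of (A10), and (Add).

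The three remaining items I would not feed to SQEMA but derive semantically from the correspondents already obtained, assuming an arbitrary Kripke frame in which those conditions hold. For the right-to-left direction of (A10) I use only the symmetry-type condition of (A5); for (A8) I show transitivity of $E$ from the Euclidean condition of (A9) together with (A5) and the definability of $E$ from $T$ (reflexivity of $E$ coming from $Txxx$, whence Euclidean plus reflexive gives an equivalence relation); and for (A11) I combine (A0), (A5), and (A8) to verify its frame condition directly. In each case the argument is a short first-order deduction from the other correspondents, so no new SQEMA run is needed.

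The hard part will be the Phase~2 manipulations for the axioms containing the binary modality $\nabla$, namely (A5), (A10), and especially (Add). Because $\nabla$ has two distinct reversive adjoints $\nablain{1}$ and $\nablain{2}$, every application of a $\nabla$-rule forces a choice of which argument to solve for, and a careless order of eliminations can leave a variable that is no longer isolable by the Ackermann rule. Tracking the polarities correctly through these branching choices, and then correctly reconstituting the ternary relation $T$ from the nested reversive operators during the standard translation of Phase~3 --- in particular, for (Add), making the two occurrences of $S$ compose with $T$ in the order that produces $\forall y,z,w\,((Txyz \wedge zSw) \to \exists u\,(Txuw \wedge ySu))$ --- is where the bookkeeping must be done with care.
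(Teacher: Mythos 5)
Your proposal follows, in its core, the same route as the paper: run SQEMA on the axioms that carry real relational content, dispose of (A1)--(A3) and (K) as valid in all frames, invoke \cite[Thms.~4.3, 5.14]{CoGoVa06b} for canonicity and local correspondence, and handle (A8), the right-to-left half of (A10), and (A11) without a SQEMA run. The genuine difference lies in how those last three items are treated. The paper handles them \emph{syntactically}: it derives them as theorems from the other axioms inside the modal system (e.g.\ $\univ p \to p$ is derived, (A9) is the S5 axiom for $\univ$, and S5 together with T yields K4; (A11) is obtained by substitution and (A5)), so their validity in the canonical frame follows because frame validity is closed under the rules and substitution. You instead argue \emph{semantically}, deducing their frame conditions by first-order reasoning from the correspondents already computed. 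Your route is legitimate and suffices for the intended application: the canonical frame satisfies the computed correspondents (by canonicity of the SQEMA-handled axioms), hence validates the three remaining axioms, hence $\MSSIC$ is Kripke complete; and your first-order deductions do go through (for (A11): $Txyz$ and the (A5) condition give $Txzy$, the (A0) condition then gives $xEy$, and transitivity of $E$ finishes). What the paper's route buys is that the ``is a consequence of'' claims in the statement are established as derivability claims, which is strictly stronger than the frame-level consequence your argument yields; what your route buys is uniformity, since everything stays on the first-order side.

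Two points need repair. First, your attribution for the right-to-left half of (A10) is off: the symmetry condition of (A5) alone does not yield it semantically. What is needed is reflexivity of $E$ (so that $\univ \phi \to \phi$ is valid), and that requires $Txxx$ (the (A4) condition) together with the definability of $E$ from $T$ given by (A0). (The paper's own attribution is similarly loose, as its derivation of $\nabla(\bot,p)\to p$ implicitly uses (A4), but you should state the correct dependencies.) Second, item 5 of the statement claims both that (A8) follows from the other axioms \emph{and} that it locally corresponds to transitivity of $E$; since you never feed (A8) to SQEMA, your plan as written never establishes the correspondence half. You must either run SQEMA on (A8) as well or, as the paper does, cite the standard fact that the K4 axiom for $\univ$ is canonical and locally corresponds to transitivity.
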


\begin{proof}
\eqref{thm:appendix:item1}.
We execute the SQEMA algorithm with the axiom $\univ p \biimpl \nabla(\bot,p)$ as input.
We first negate the formula and rewrite it to obtain 
\[
(\univ p \wedge \Delta(\top,\neg p)) \vee (\nabla(\bot,p) \wedge \ex (\neg p)).
\]
The two disjuncts give two initial systems
\[
\lVert \neg \mathbf{i} \vee (\univ p \wedge \Delta(\top,\neg p)) \quad \text{and} \quad \lVert \neg \mathbf{i} \vee (\nabla(\bot,p) \wedge \ex (\neg p)).
\]
Applying the $\wedge$-rule and the $\univ$-rule to the first system gives
\[
\begin{syst}
\univin(\neg \mathbf{i}) \vee p \\ 
\neg \mathbf{i} \vee \Delta(\top,\neg p).
\end{syst}
\]
Then the Ackermann rule eliminates $p$ from the system and yields
\[
\begin{syst}
\neg \mathbf{i} \vee \Delta(\top,\univin(\neg \mathbf{i})).
\end{syst}
\]
We now turn our attention to the second system and apply the $\wedge$-rule and the $\nabla$-rule. So, we obtain
\[
\begin{syst}
\nablain{2}(\bot,\neg \mathbf{i}) \vee p\\
\neg \mathbf{i} \vee \ex(\neg p).
\end{syst}
\]
We now apply the Ackermann rule to eliminate $p$:
\[
\begin{syst}
\neg \mathbf{i} \vee \ex\nablain{2}(\bot,\neg \mathbf{i}).
\end{syst}
\]
Thus, the algorithm succeeds and guarantees that (A0) is canonical. The negation of the disjunction of the formulas in the two systems is equivalent to the following formula
\[
\mathbf{i} \wedge \nabla(\bot,\exin(\mathbf{i})) \wedge \univ\Deltain{2}(\top,\mathbf{i}),
\]
whose corresponding first-order formula is equivalent to 
\[
\forall z \, (xEz \biimpl \exists y \,Txyz).
\]

\eqref{thm:appendix:item2}.
Axioms (A1), (A2), (A3) express the normality of the modality $\nabla$, so they are clearly canonical and locally correspond to $\top$.

\eqref{thm:appendix:item3}.
We execute the SQEMA algorithm with the axiom $\nabla(p,q) \impl (p \vee q)$ given as input. We negate the formula and rewrite it as follows 
\[
\nabla(p,q) \wedge \neg p \wedge \neg q.
\]
The formula gives a single system
\[
\begin{syst}
\neg \mathbf{i} \vee (\nabla(p,q) \wedge \neg p \wedge \neg q).
\end{syst}
\]
Applying the $\wedge$-rule and the polarity-switching rule on both $p$ and $q$ yields
\[
\begin{syst}
\neg \mathbf{i} \vee \nabla(\neg p, \neg q)\\
\neg \mathbf{i} \vee p\\
\neg \mathbf{i} \vee q.
\end{syst}
\]
The Ackermann rule can be used twice to eliminate both $p$ and $q$:
\[
\begin{syst}
\neg \mathbf{i} \vee \nabla(\neg \mathbf{i}, \neg \mathbf{i}).
\end{syst}
\]
Thus, the algorithm succeeds on (A4), which is then canonical. The negation of the only formula in the system is equivalent to
\[
\mathbf{i} \wedge \Delta(\mathbf{i},\mathbf{i}),
\]
whose corresponding first-order formula is equivalent to 
\[
Txxx.
\]

\eqref{thm:appendix:item4}.
Since (A5) is the conjunction of $\nabla(p,q) \impl \nabla(q,p)$ and $\nabla(q,p) \impl \nabla(p,q)$, it is sufficient to run SQEMA on the formula $\nabla(p,q) \impl \nabla(q,p)$. Its negation can be rewritten as
\[
\nabla(p,q) \wedge \Delta(\neg q,\neg p).
\]
We obtain a single system
\[
\begin{syst}
\neg \mathbf{i} \vee (\nabla(p,q) \wedge \Delta(\neg q,\neg p)).
\end{syst}
\]
The $\wedge$-rule and the $\nabla$-rule give
\[
\begin{syst}
\nablain{1}(\neg \mathbf{i},q) \vee p\\
\neg \mathbf{i} \vee \Delta(\neg q,\neg p).
\end{syst}
\]
We eliminate $p$ using the Ackermann rule:
\[
\begin{syst}
\neg \mathbf{i} \vee \Delta(\neg q,\nablain{1}(\neg \mathbf{i},q)).
\end{syst}
\]
We then use the $\Delta$-rule and the polarity-switching rule on $q$ to obtain
\[
\begin{syst}
\neg \mathbf{i} \vee \Delta(\mathbf{j}_1,\mathbf{j}_2)\\
\neg \mathbf{j}_1 \vee q\\
\neg \mathbf{j}_2 \vee \nablain{1}(\neg \mathbf{i},\neg q).
\end{syst}
\]
The Ackermann rule eliminates $q$:
\[
\begin{syst}
\neg \mathbf{i} \vee \Delta(\mathbf{j}_1,\mathbf{j}_2)\\
\neg \mathbf{j}_2 \vee \nablain{1}(\neg \mathbf{i},\neg \mathbf{j}_1).
\end{syst}
\]
Thus, SQEMA succeeds on the formula and guarantees its canonicity. The negation of the conjunction of the two formulas in the system is equivalent to
\[
(\mathbf{i} \wedge \neg\Delta(\mathbf{j}_1,\mathbf{j}_2)) \vee (\mathbf{j}_2 \wedge \Deltain{1}(\mathbf{i},\mathbf{j}_1)),
\]
whose corresponding first-order formula is equivalent to
\[
\forall y, z \, (Txyz \to Txzy).
\]

\eqref{thm:appendix:item5}.
Axiom (A8) coincides with the axiom (K4) for $\univ$. It is well known that (K4) is canonical and locally corresponds to the first-order formula 
\[
\forall y,z ((x E y \wedge y E z) \to xEz).
\] 
We show that (A8) follows from (A5) and (A9). Axiom (A5) yields that $\MSSIC$ proves $\nabla(\bot,p) \to p$, and hence also $\univ p \to p$ by (A0). Note that $\univ p \to p$ is the axiom (T) for $\univ$. As we will observe in \eqref{thm:appendix:item6}, (A9) is equivalent to the axiom (S5) for $\univ$. It is well known that (S5) together with (T) implies (K4). This shows that (A8) follows from (A5) and (A9).

\eqref{thm:appendix:item6}.
Axiom (A9) is equivalent to the (S5) axiom for $\univ$. It is well known that it locally correspond to $\forall y,z \, ((xEy \wedge xEz) \to yEz)$ and is canonical.

\eqref{thm:appendix:item7}.
As shown in \eqref{thm:appendix:item5}, $\univ p \to p$ is a consequence of (A0) and (A5). Thus, $\MSSIC$ proves $\univ \nabla(p,q) \to \nabla(p,q)$, which is the right-to-left implication of (A10).

\eqref{thm:appendix:item8}.
We run SQEMA on the left-to-right implication of (A10). We negate $\nabla(p,q) \impl \univ \nabla(p,q)$ and rewrite it:
\[
\nabla(p,q) \wedge \ex \Delta(\neg p,\neg q).
\]
We then get the system
\[
\begin{syst}
\neg \mathbf{i} \vee (\nabla(p,q) \wedge \ex \Delta(\neg p,\neg q)).
\end{syst}
\]
Using the $\wedge$-rule and the $\nabla$-rule we obtain
\[
\begin{syst}
\nablain{2}(p,\neg \mathbf{i}) \vee q\\
\neg \mathbf{i} \vee  \ex \Delta(\neg p,\neg q).
\end{syst}
\]
The Ackermann rule eliminates $q$ and yields
\[
\begin{syst}
\neg \mathbf{i} \vee \ex \Delta(\neg p,\nablain{2}(p,\neg \mathbf{i})).
\end{syst}
\]
We then use the $\ex$-rule and the $\Delta$-rule, and then we apply the polarity-switching rule on $p$:
\[
\begin{syst}
\neg \mathbf{i} \vee \ex \mathbf{j}_1\\
\neg \mathbf{j}_1 \vee \Delta(\mathbf{j}_2, \mathbf{j}_3)\\
\neg \mathbf{j}_2 \vee p\\
\neg \mathbf{j}_3 \vee \nablain{2}(\neg p,\neg \mathbf{i}).
\end{syst}
\]
The Ackermann rule allows to eliminate $p$:
\[
\begin{syst}
\neg \mathbf{i} \vee \ex \mathbf{j}_1\\
\neg \mathbf{j}_1 \vee \Delta(\mathbf{j}_2, \mathbf{j}_3)\\
\neg \mathbf{j}_3 \vee \nablain{2}(\neg \mathbf{j}_2,\neg \mathbf{i}).
\end{syst}
\]
Thus, SQEMA succeeds on the formula, which is then canonical. The negation of the conjunction of the formulas in the system is equivalent to the formula
\[
(\mathbf{i} \wedge \neg \ex \mathbf{j}_1) \vee (\mathbf{j}_1 \wedge \neg\Delta(\mathbf{j}_2,\mathbf{j}_3)) \vee (\mathbf{j}_3 \wedge \Deltain{2}(\mathbf{j}_2,\mathbf{i})),
\]
whose corresponding first-order formula is equivalent to
\[
\forall y,z,w \, ((x E w \wedge Twyz) \to Txyz).
\]

\eqref{thm:appendix:item9}.
It follows from (A8) and (A0) that $\univ p \to \nabla(\bot, \univ p)$ is a theorem of $\MSSIC$. Then (A5) yields $\univ p \to \nabla(\univ p, \bot)$, which is Axiom (A11).

\eqref{thm:appendix:item10}.
This is clear.

\eqref{thm:appendix:item11}.
We execute SQEMA on Axiom (Add). We negate $\nabla(p,q) \impl \nabla(\Diamond p, \Box q)$ and rewrite it as follows:
\[
\nabla(p,q) \wedge \Delta(\Box (\neg p), \Diamond (\neg q)).
\]
So, we get the system
\[
\begin{syst}
\neg \mathbf{i} \vee (\nabla(p,q) \wedge \Delta(\Box (\neg p), \Diamond (\neg q))).
\end{syst}
\]
By the $\wedge$-rule and the $\Delta$-rule, we obtain
\[
\begin{syst}
\neg \mathbf{i} \vee \nabla(p,q)\\
\neg \mathbf{i} \vee \Delta(\mathbf{j}_1,\mathbf{j}_2)\\
\neg \mathbf{j}_1 \vee \Box (\neg p)\\
\neg \mathbf{j}_2 \vee \Diamond (\neg q).
\end{syst}
\]
We apply the $\Box$-rule, the $\Diamond$-rule, and then the polarity-switching rule on both $p$ and $q$:
\[
\begin{syst}
\neg \mathbf{i} \vee \nabla(\neg p,\neg q)\\
\neg \mathbf{i} \vee \Delta(\mathbf{j}_1,\mathbf{j}_2)\\
\Boxin(\neg \mathbf{j}_1) \vee p\\
\neg \mathbf{j}_2 \vee \Diamond \mathbf{j}_3\\
\neg \mathbf{j}_3 \vee q.
\end{syst}
\]
Two applications of the Ackermann rule eliminate both variables $p$ and $q$
\[
\begin{syst}
\neg \mathbf{i} \vee \nabla(\Boxin(\neg \mathbf{j}_1),\neg \mathbf{j}_3)\\
\neg \mathbf{i} \vee \Delta(\mathbf{j}_1,\mathbf{j}_2)\\
\neg \mathbf{j}_2 \vee \Diamond \mathbf{j}_3.
\end{syst}
\]
Thus, SQEMA succeeds and the axiom is canonical. The negation of the conjunction of the formulas in the system is equivalent to
\[
(\mathbf{i} \wedge \neg\Delta(\mathbf{j}_1,\mathbf{j}_2)) \vee (\mathbf{j}_2 \wedge \neg\Diamond \mathbf{j}_3) \vee (\mathbf{i} \wedge \Delta(\Diamondin \mathbf{j}_1,\mathbf{j}_3)),
\]
whose corresponding first-order formula is equivalent to 
\[
\forall y,z,w \, ((Txyz \wedge z S w) \to \exists u \,(Txuw \wedge y S u)).\qedhere
\]
\end{proof}

\subsection*{Acknowledgements}

We would like to thank the referees for careful reading and useful comments which have improved the presentation of the paper.

Luca Carai and Silvio Ghilardi are members of the Gruppo Nazionale per le Strutture Algebriche, Geometriche e le loro Applicazioni (GNSAGA) of the Istituto Nazionale di Alta Matematica (INdAM) and gratefully acknowledge its support. The first three named authors acknowledge the support of the MSCA-RISE-Marie Skłodowska-Curie Research and Innovation Staff Exchange (RISE) project MOSAIC 101007627 funded by Horizon 2020 of the European Union. The research of Zhiguang Zhao is supported by Shandong Provincial Natural Science Foundation, China (project number: ZR2023QF021) and Taishan Young Scholars Program of the Government of Shandong Province, China (No.tsqn201909151).

\end{document}